\documentclass{amsart}
\usepackage[T1]{fontenc}
\usepackage[utf8]{inputenc}
\usepackage{lmodern}
\usepackage{amsmath}
\usepackage{amssymb}
\usepackage{stmaryrd}
\usepackage{mathrsfs}
\usepackage{csquotes}
\usepackage[ruled]{algorithm2e}
\usepackage{tikz}
\usepackage{comment}
\usetikzlibrary{patterns}
\usepackage{empheq}
\usepackage{bbm}
\usepackage{hyperref}
\setcounter{tocdepth}{2}
\hypersetup{colorlinks=true,
            linkcolor=green!30!black,
            citecolor=cyan!50!black}

\usepackage{geometry}
\geometry{top=3cm, bottom=3cm, left=3cm, right=3cm}

\usepackage{amsthm}
\newtheorem{thm}{Theorem}[section]
\newtheorem{prop}[thm]{Proposition}
\newtheorem{lemma}[thm]{Lemma}
\newtheorem{cor}[thm]{Corollary}
\newtheorem{fact}[thm]{Fact}

\numberwithin{equation}{section}

\title{Finding minimum spanning trees via local improvements}
\address{Department of Mathematics and Statistics, McGill University, Montr\'eal, Canada}
\author{Louigi Addario-Berry}
\email{louigi.addario@mcgill.ca}
\author{Jordan Barrett}
\email{jordan.barrett@mail.mcgill.ca}
\author{Beno\^it Corsini}
\email{benoitcorsini@gmail.com}

\providecommand{\R}{}
\renewcommand{\R}{\mathbb{R}}
\newcommand{\mstr}{\ensuremath{\mathop{\mathrm{MST}}}}
\newcommand{\wt}{\ensuremath{\mathop{\mathrm{wt}}}}
\newcommand{\cost}{\ensuremath{\mathop{\mathrm{cost}}}}
\newcommand{\wdiam}{\ensuremath{\mathop{\mathrm{wdiam}}}}
\newcommand{\diam}{\mathop{\ensuremath{\mathrm{diam}}}}

\newcommand{\dist}{\mathrm{dist}}

\newcommand{\eps}{\varepsilon}
\newcommand{\rG}{\mathbb{G}}
\newcommand{\rE}{\mathrm{E}}
\newcommand{\rV}{\mathrm{V}}
\newcommand{\rU}{\mathbb{U}}
\newcommand{\rX}{\mathbb{X}}
\newcommand{\rK}{\mathbb{K}}
\newcommand{\convp}{\overset{\mathbb{P}}{\longrightarrow}}
\newcommand{\rS}{\mathbb{S}}
\newcommand{\rW}{\ensuremath{W}}
\newcommand{\rL}{\ensuremath{L}}
\newcommand{\rI}{\ensuremath{\mathrm{I}}}
\newcommand{\rA}{\ensuremath{\mathbf{A}}}

\newcommand{\Wvalue}{\ensuremath{\frac{1}{\log n}}}
\newcommand{\Lvalue}{\ensuremath{\lfloor\log\log n\rfloor}}

\newcommand{\Star}{\ensuremath{S}}
\newcommand{\Path}{\ensuremath{P}}

\definecolor{theme}{RGB}{15, 87, 24} % hsl(127, 70, 20)
\definecolor{lighttheme}{RGB}{51, 153, 63} % hsl(127, 50, 40)
\definecolor{block}{RGB}{102, 60, 0} % hsl(35, 100, 20)
\definecolor{lightblock}{RGB}{255, 238, 214} % hsl(25, 100, 92)
\definecolor{alert}{RGB}{212, 43, 43} % hsl(0, 66, 50)

\begin{document}

\subjclass[2020]{05C80, 60C05, 05C85}
\keywords{Minimum spanning trees, local search, random graphs}
\date{May 10, 2022}

\begin{abstract}
    We consider a family of local search algorithms for the minimum-weight spanning tree, indexed by a parameter $\rho$. One step of the local search corresponds to replacing a connected induced subgraph of the current candidate graph whose total weight is at most $\rho$ by the minimum spanning tree (\mstr) on the same vertex set. Fix a non-negative random variable $X$, and consider this local search problem on the complete graph $K_n$ with independent $X$-distributed edge weights.
    Under rather weak conditions on the distribution of $X$, we determine a threshold value $\rho^*$ such that 
    the following holds. If the starting graph (the ``initial candidate \mstr'') is independent of the edge weights, then 
     if $\rho > \rho^*$ local search can construct the \mstr{} with high probability (tending to $1$ as $n \to \infty$), whereas if $\rho < \rho^*$ it cannot with high probability.
\end{abstract}
\maketitle

\section{Introduction}

{\em Local search} is the name for an optimization paradigm in which optimal or near-optimal solutions are sought algorithmically, via sequential improvements which are ``local'' in that at each step, the search space consists only of neighbours (in some sense) of the current solution. Well-known algorithmic examples of this paradigm include simulated annealing, hill climbing, and the Metropolis-Hasting algorithm.

A recent line of research considers the behaviour of local search on {\em smoothed} optimization problems, in which the input is either fully random or is a random perturbation of a fixed input. The goal in this setting is to characterize the running time of local search and the quality of its output. Problems approached in this vein include {\em max-cut} \cite{MR3678200,MR4294550,10.1145/3011870}, for which the allowed ``local'' improvements consist of moving a single vertex; {\em max-2CSP}  and the {\em binary function optimization problem} \cite{10.1145/3357713.3384325}, for which the allowed local improvements are bit flips; and Euclidean TSP \cite{10.1007/978-3-319-21840-3_43}, where the allowed local improvements consist of replacing edge pairs $uv,wx$ with pairs $uw,vx$ (when the result is still a tour).

In the current work, we analyze local search for the  {\em random minimum spanning tree} problem, one of the first and foundational problems in combinatorial optimization. We now briefly describe our results (for more precise statements see Section~\ref{sec:results}, below). As input to the problem, we take the randomly-weighted complete graph $\rK_n=(K_n,\rX)$, where $\rX=(X_e,e \in \rE(K_n))$ are independent copies of a random variable $X$, and an arbitrary starting graph $H_0$, which we aim to transform into the minimum-weight spanning tree $\mstr$. We fix a {\em threshold weight} $\rho > 0$; at step $k\ge 0$, a local improvement consists of choosing a connected induced subgraph of the current \mstr\ candidate $H_k$ whose current total weight is at most $\rho$, and replacing it  by the minimum weight spanning tree on the same vertex set. 

Suppose that $X$ is non-negative and has a density $f:[0,\infty) \to [0,\infty)$ which is continuous at $0$ and satisfies $f(0)>0$. 
Then writing $\rho^* = \sup\{x: \mathbb{P}(X>x)>0\}$, we prove that if $\rho > \rho^*$ then there exist local search paths which output the \mstr, whereas if $\rho < \rho^*$ then local search cannot reach the \mstr\ (and, indeed, with high probability will only achieve an approximation ratio of order $\Theta(n)$).

\subsection{Detailed statement of the results}\label{sec:results}

Let $\rG=(G,w)=(V,E,w)$ be a finite weighted connected graph, where $G=(V,E)$ is a graph and $w:E \to (0,\infty)$ are edge weights; set $\rV(\rG)=\rV(G)=V$ and $\rE(\rG)=\rE(G)=E$. For a subgraph $H$ of $G$ write $\mathop{w}(H) = \sum_{e \in \rE(H)} \mathop{w}(e)$ for its weight.
A {\em minimum spanning tree} (\mstr) of $\rG$ is a spanning tree $T$ of $G$ which minimizes $\mathop{w}(T)$ among all spanning trees of $G$.
There is a unique \mstr\ provided all edge weights are distinct; we hereafter restrict our attention to weighted graphs $\rG$ where all edge weights are distinct (and more strongly where the edge weights are linearly independent over $\R$); we call such graphs {\em generic}. For a generic weighted graph $\rG$, we write $\mstr(\rG)$ for the unique minimum spanning tree of $\rG$.

For a weighted graph $\rG=(V,E,w)$ 
and a set $S \subset V$, write $G[S]$ for the induced subgraph $G[S]=(S,E|_{S \times S})$ and $\rG[S]$ for the induced weighted subgraph $\rG[S]=(G[S],w|_{\rE(G[S])})$. Now, given a spanning subgraph $H$ of $G$, define 
$\Phi(H,S)=\Phi_\rG(H,S)$ as follows. If $H[S]$ is connected then let 
$\Phi(H,S)$ be the spanning subgraph with edge set 
$(\rE(H)\setminus \rE(H[S]))\cup \rE(\mstr(\rG[S]))$; if $H[S]$ is not connected then let $\Phi(H,S)=H$. In words, to form $\Phi(H,S)$ from $H$, we replace $H[S]$ by the minimum-weight spanning tree of $\rG[S]$, unless $H[S]$ is not connected.

Now suppose we are given a finite weighted connected graph $\rG=(V,E,w)$, a spanning subgraph $H$ of $G$, and a sequence $\rS=(S_i,1 \le i \le m)$ of subsets of $V$. 
Define a sequence of spanning subgraphs 
$(H_i,0 \le i \le m)$ as follows. 
Set $H_0=H$, and for $1 \le i \le m$ let $H_i = \Phi_\rG(H_{i-1},S_i)$. Using the previous definition of $\Phi$, this simply corresponds to sequentially replacing the subgraph of $H_{i-1}$ on $S_i$ by its corresponding minimum spanning tree (assuming $H_{i-1}$ is connected).
We refer to $\rS$ as an {\em optimizing sequence} for the pair $(\rG,H)$, 
and call $(H_i,0 \le i \le m)$ the {\em subgraph sequence corresponding to $\rS$}.
We say $\rS$ is an {\em $\mathit{MST}$ sequence} for $(\rG,H)$ if the final spanning subgraph $H_m$ is the \mstr\ of $\rG$.

The weight of step $i$ of the sequence $\rS$ is defined as 
\[
\wt(\rS,i) = \wt(\rG,H,\rS,i) := \mathop{w}\big(H_{i-1}[S_i]\big) = \sum_{e \in \rE(H_{i-1}[S_i])} \mathop{w}(e)\, ,
\]
and the weight of the whole sequence is the maximal weight of a single step:
\[
\wt(\rS) = \wt(\rG,H,\rS) := \max\Big\{\wt(\rS,i):1 \le i \le m\Big\}\, .
\]
The {\em cost} of the pair $(\rG,H)$ is defined as 
\[
\cost(\rG,H) := \min\Big\{\wt(\rS):\rS\mbox{ is an \mstr\ sequence for }(\rG,H)\Big\}\, .
\]
The following theorem is the main result of the current work. 
Write $K_n$ for the complete graph with vertex set $[n]=\{1,\ldots,n\}$, and $\rK_n=(K_n,\rX)$ for the randomly weighted complete graph, where $\rX=(X_e,e \in \rE(K_n))$ are independent Uniform$[0,1]$ random variables. 
If $\rS=(S_1,\ldots,S_m)$ is an optimizing sequence for $(\rK_n,H_n)$ then we write $H_{n,0}=H_n$ and $H_{n,i}=\Phi_{\rK_n}(H_{n,i-1},S_i)$ for $1 \le i \le m$.
Finally, we say a sequence $(E_n,n \ge 1)$ of events occurs with high probability if $\mathbb{P}(E_n) \to 1$ as $n \to \infty$.

\begin{thm}\label{thm:MAIN}
Fix any sequence $(H_n,n \ge 1)$ of connected graphs with $H_n$ being a spanning subgraph of $K_n$. Then for any $\eps > 0$, as $n \to \infty$, 
\begin{itemize}
    \item[(a)] with high probability there exists an \mstr\ sequence $\rS$ for $(\rK_n,H_n)$ with $\wt(\rS) \le 1+\eps$, and 
    \item[(b)] there exists $\delta > 0$ such that with high probability, given any optimizing sequence $\rS=(S_1,\ldots,S_m)$ for $(\rK_n,H_n)$ with $\wt(\rS) \le 1-\eps$, the final spanning subgraph $H_{n,m}$ has weight $\mathop{w}(H_{n,m}) \ge \delta n \mathop{w}(\mstr(\rK_n))$.
\end{itemize}
In particular, $\cost(\rK_n,H_n) \convp 1$ as $n \to \infty$. 
\end{thm}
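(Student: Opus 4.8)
\noindent I would handle (b) first (it is short), then (a) (the substance), and then deduce the convergence statement.

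\textbf{Part (b).} The plan is to exhibit, for \emph{any} starting graph, an $\Omega(n)$-weight obstruction to cheap moves. Recall that with high probability $\mathop{w}(\mstr(\rK_n))=O(1)$ (Frieze) and every edge of $\mstr(\rK_n)$ has weight at most $C\log n/n$ for a suitable large constant $C$ --- the latter because $\mstr(\rK_n)$ is contained in the graph of edges of weight $\le C\log n/n$, which is connected whp. Since $H_n$ is fixed before the weights are drawn and is connected, it has at least $n-1$ edges, so the number of them of weight exceeding $1-\eps$ stochastically dominates a $\mathrm{Binomial}(n-1,\eps)$ variable and is thus at least $\tfrac{\eps}{2}(n-1)$ whp; call these the \emph{heavy} edges and note each has weight $\gg C\log n/n$, hence lies outside $\mstr(\rK_n)$. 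The key observation is that an edge $e$ can only leave the current candidate at a step $i$ with $e\in\rE(H_{i-1}[S_i])$, which forces $\wt(\mathcal S,i)\ge\mathop{w}(e)$; so along any optimizing sequence with $\wt(\mathcal S)\le1-\eps$ no heavy edge is ever removed, and therefore $\mathop{w}(H_{n,m})\ge\tfrac{\eps}{2}(n-1)(1-\eps)\ge\delta n\,\mathop{w}(\mstr(\rK_n))$ whp for a suitable $\delta=\delta(\eps)>0$.

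\textbf{Part (a): reduction and the easy half.} Put $\tau_n=C\log n/n$, call a spanning tree of $\rK_n$ \emph{light} if all its edges have weight $\le\tau_n$, and use the following whp-properties of $\rK_n$: the graph of edges of weight $\le\tau_n$ is connected and contains $\mstr(\rK_n)$; $\mstr(\rK_n)$ has diameter $n^{1/3+o(1)}$ (any sublinear bound suffices); every vertex has minimum incident weight $O(\log n/n)$. It is enough to show (I) $H_n$ can be turned into some light spanning tree using moves of weight $\le1+\eps$, and (II) any light spanning tree can be turned into $\mstr(\rK_n)$ using moves of weight $\le1+\eps$. Task (II) is routine: I would maintain the invariant that $H$ is a light spanning tree (so $\Phi$ applied to the vertex set of any subtree preserves tree-ness and the bound $\le\tau_n$, the latter because an \mstr\ of a set spanned by weight-$\le\tau_n$ edges uses only such edges), first reducing the diameter by partitioning $H$ into $\Theta(\log n)$ connected blocks of size $O(n/\log n)$ and applying $\Phi$ to each block (move weight $O(n/\log n)\cdot\tau_n\le1+\eps$), after which $\diam(H)=n^{1/3+o(1)}$; then inserting the edges of $\mstr(\rK_n)$ one at a time, each by applying $\Phi$ to the vertex set of the $H$-path joining its endpoints (which has $n^{1/3+o(1)}$ vertices, so the move weighs $o(1)$; the edge is inserted because it lies in $\mstr(\rK_n[S])$, and the set of $\mstr(\rK_n)$-edges in $H$ never shrinks), re-running the diameter reduction whenever needed; when all edges of $\mstr(\rK_n)$ are in, the spanning tree $H$ contains the spanning tree $\mstr(\rK_n)$ and hence equals it.

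\textbf{Task (I): the crux.} This is where the constant $1$ is paid for, and where I expect the real work. One must delete every heavy edge of $H_n$ (for $H_n$ a path, almost all of its edges) and finish at a light tree, against two tensions: deleting a heavy edge of weight near $1$ forces a move of weight near $1$, leaving no slack; and the moves legal \emph{at the outset} are dictated by $H_n$ --- if $H_n$ is a path or a star, every connected induced subgraph on three or more vertices already weighs $\approx2$ unless it is a sufficiently light cherry, so the only initially legal nontrivial moves are ``triangle moves'' on three vertices, and near a run of heavy edges even those can be unavailable. The intended construction grows a \emph{light core} $B$ (a light tree of small diameter on a vertex subset) with the unprocessed part of $H_n$ hanging off it, absorbing vertices one at a time: to absorb a vertex $v$ and simultaneously delete the heavy $H_n$-edge $e=vu$ attaching it, apply $\Phi$ to $\{v\}\cup P$, where $P$ is a shortest $B$-path from $u$ to the $B$-endpoint of $v$'s minimum-weight edge into $V(B)$; then the move weighs $\mathop{w}(e)+O(\log n/n)+\diam(B)\cdot\max_{g\in\rE(B)}\mathop{w}(g)\le1+\eps$ provided $B$ stays light and low-diameter, it deletes $e$, and it attaches $v$ cheaply. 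Making this rigorous requires (i) \emph{bootstrapping} the core from the initially legal tiny moves, showing triangle moves do assemble a light, low-diameter seed; (ii) keeping $\diam(B)$ and $\max_{g\in\rE(B)}\mathop{w}(g)$ controlled throughout --- the first absorptions introduce non-light edges of weight $\Theta(1/k)$, which must be cleaned up later using cycles created by \mstr-edges absorbed subsequently; and (iii) an ordering/charging argument ensuring every heavy $H_n$-edge is eventually deleted with no move exceeding $1+\eps$. The hard part, I believe, is the bootstrap in (i)--(iii): the only cheap way to join the endpoints of a heavy edge runs through previously-built light structure, but that structure must itself be assembled while threading through the heavy regions of $H_n$, so ``keep every move $\le1+\eps$'' and ``make progress dismantling $H_n$'' pull directly against each other.

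\textbf{Convergence.} Chaining (I) and (II) gives $\cost(\rK_n,H_n)\le1+\eps$ whp for every $\eps>0$, so $\mathbb P\big(\cost(\rK_n,H_n)>1+\eps\big)\to0$. Conversely $\cost(\rK_n,H_n)$ is always finite (the single move $S=[n]$ yields $\mstr(\rK_n)$), and $\cost(\rK_n,H_n)\le1-\eps$ would, via (b) applied to a weight-$\le(1-\eps)$ \mstr\ sequence, force $\mathop{w}(\mstr(\rK_n))\ge\delta n\,\mathop{w}(\mstr(\rK_n))$, impossible for large $n$; hence $\mathbb P\big(\cost(\rK_n,H_n)<1-\eps\big)\to0$. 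As $\eps>0$ is arbitrary, $\cost(\rK_n,H_n)\convp1$.
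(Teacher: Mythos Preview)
Your argument for (b) is correct and essentially identical to the paper's: heavy edges of $H_n$ (those of weight $>1-\eps$) are never touched by any step of weight $\le 1-\eps$, there are $\Theta(n)$ of them whp, and $w(\mstr(\rK_n))\to\zeta(3)$ in probability.

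For (a), your strategy diverges from the paper's, and the gap you yourself flag in Task~(I) is real. You correctly identify that for a generic $H_n$ (say a path) the only initially legal nontrivial moves are tiny ``cherries'', and that near runs of heavy edges even those may be blocked; you then sketch a light-core absorption scheme but leave items (i)--(iii) as requirements rather than arguments. That bootstrap is genuinely the heart of the problem, and nothing in your outline explains how to assemble the seed core when $H_n$ offers no cheap connected set of more than two vertices in a given region. (There is also a secondary issue in Task~(II): your diameter-reduction move on a block of size $\Theta(n/\log n)$ has weight roughly $(n/\log n)\cdot\tau_n = C$, so you must take $C$ barely above $1$, not ``suitably large'', and then the connectivity claim and the ``all MST edges have weight $\le\tau_n$'' claim become tight and need care.)

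The paper sidesteps your bootstrap difficulty entirely, by a structural reduction you do not use. A short Ramsey-type argument (Fact~1.2) shows that \emph{any} connected $H_n$ contains an induced clique, star, or path on $\ge\tfrac12\sqrt{\log_2 n}$ vertices; the hard technical work (Proposition~1.3, proved in Section~3) is then to establish the upper bound just for these three special starting graphs, where the structure gives enough purchase to build a small MST piece at cost $\le 1+\eps$. Once a set $V_n$ of size $\to\infty$ carries $\mstr(\rK_n[V_n])$, an ``eating algorithm'' (Proposition~1.4/Corollary~2.2) absorbs the remaining vertices one at a time, each step costing at most $1+\max_i \wdiam(\mstr(\rK_n[U_i]))$, and a tail bound on these weighted diameters (Theorem~2.3) finishes. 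So where you try to dismantle the heavy part of $H_n$ directly, the paper ignores most of $H_n$, bootstraps inside a single well-structured $\sqrt{\log n}$-sized piece, and grows outward from there. This is the missing idea in your plan.
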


We discuss possible refinements of and extensions to Theorem~\ref{thm:MAIN} in the conclusion, Section~\ref{sec:conclusion}. We also explain in that section how to extend Theorem~\ref{thm:MAIN}  to more general edge weight distributions than Uniform$[0,1]$, as described just before Section~\ref{sec:results}.

\subsection{Overview of the proof}\label{sec:overview}

In this section, we give an overview of the proof of Theorem~\ref{thm:MAIN}, while postponing the proofs of the more technical aspects to Sections~\ref{sec:eating} and~\ref{sec:clique star path} and Appendix~\ref{app:diam_bd_proof}. The lower bound of Theorem~\ref{thm:MAIN} is straightforward, so we provide it in full detail immediately.

\paragraph{Lower bound of Theorem~\ref{thm:MAIN}.}

Fix $\eps > 0$, and let $E_{n,\eps}= \{e \in \rE(H_n): X_e > 1-\eps\}$. The set $E_{n,\eps}$ is a binomial random subset of $\rE(H_n)$ in which each edge is present with probability $\eps$, so 
$\mathbb{P}(|E_{n,\eps}| \ge \eps n/2) \to 1$.

Note that, for any edge $e=uv \in \rE(H_n)\setminus \rE(\mstr(\rK_n))$, 
and any optimizing sequence $\rS=(S_1,\ldots,S_m)$ for $(\rK_n,H_n)$, if there is no set $S_i$ with $u,v \in S_i$, then $e \in H_{n,m}$. It follows that for any optimizing sequence $\rS$ with $\wt(\rS) \le 1-\eps$, the final spanning subgraph $H_{n,m}$ has $E_{n,\eps} \subset \rE(H_{n,m})$ and so on the event that $|E_{n,\eps}| \ge \eps n/2$ we have
\[
w(H_{n,m}) \ge n(1-\eps)\eps/2. 
\]
To conclude, we use that $w(\mstr(\rK_n)) \to \zeta(3)$ in probability \cite{frieze1985value}. It follows that with probability tending to $1$, both $|E_{n,\eps}| \ge \eps n/2$ and 
$w(\mstr(\rK_n))\le 2\zeta(3)$, and when both these events occur we have 
\[
w(H_{n,m}) \ge n(1-\eps)\eps/2 \ge 
w(\mstr(\rK_n)) \cdot n(1-\eps)\eps/(4\zeta(3))\, .
\]
Since this holds for any optimizing sequence with weight at most $1-\eps$, the result follows by taking $\delta=(1-\eps)\eps/(4\zeta(3))$. 

\paragraph{Upper bound of Theorem~\ref{thm:MAIN}.}

We now turn to the key ideas underlying our proof of the upper bound.
We begin with a deterministic fact.

\begin{fact}\label{fact:ramsey}
Any connected graph $H$ with vertex set $[n]$ contains an induced subgraph with at least  $\tfrac{1}{2}\sqrt{\log_2 n}$ vertices which is either a clique, a star, or a path.
\end{fact}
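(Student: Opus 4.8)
The plan is to prove a Ramsey-type dichotomy: a connected graph on $[n]$ either contains a long induced path, or is ``thick enough'' that some vertex $v$ has many neighbours, among which a large clique or a large independent set (hence, together with $v$, a large induced clique or star) must appear. I would first dispose of small $n$: whenever $\tfrac12\sqrt{\log_2 n}\le 1$ a single vertex is an induced $K_1$, so assume $n$ is large.

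The first ingredient is that every \emph{shortest} path of $H$ is an induced path, so if $H$ contains no induced path on $p$ vertices then $\diam(H)\le p-2$. Fixing any $v$ and letting $B_0\subseteq B_1\subseteq\cdots\subseteq B_{p-2}=V(H)$ be the balls around $v$, from $B_{i+1}\subseteq\bigcup_{u\in B_i}N[u]$ we get $|B_{i+1}|\le(1+\Delta)|B_i|$ with $\Delta$ the maximum degree, whence $n\le(1+\Delta)^{p-2}$; so some vertex of $H$ has degree at least $n^{1/(p-2)}-1$. The second ingredient is the finite Ramsey bound $R(k+1,k+1)\le\binom{2k}{k}<4^k$: inside $N(v)$, which has size $N\ge n^{1/(p-2)}-1$, taking $k=\lfloor\tfrac12\log_2 N\rfloor$ yields a clique or an independent set of $H$ of size $k+1\ge\tfrac12\log_2 N$, and adjoining $v$ turns it into an induced clique or an induced star of size at least $1+\tfrac12\log_2\!\big(n^{1/(p-2)}-1\big)$.

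It then remains to balance the two guarantees over the choice of $p$. Taking $p=\lfloor\sqrt{\log_2 n}\rfloor+2$ gives an induced path on at least $p>\tfrac12\sqrt{\log_2 n}$ vertices in the first case; in the second case $p-2\le\sqrt{\log_2 n}$ forces $n^{1/(p-2)}\ge 2^{\sqrt{\log_2 n}}$, so the induced clique/star has at least $1+\tfrac12\log_2\!\big(2^{\sqrt{\log_2 n}}-1\big)\ge\tfrac12\sqrt{\log_2 n}$ vertices (using $2^{x}-1\ge 2^{x-1}$ for $x\ge 1$). Combining the two cases with the small-$n$ remark completes the proof.

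I do not expect a genuine obstacle here; the work is entirely in the bookkeeping. The one structural point worth flagging is \emph{why the square root appears}: the two guarantees above are roughly $p$ and $\tfrac{1}{2(p-2)}\log_2 n$, and since the exponent $\tfrac12$ in the Ramsey bound is optimal, the best common lower bound one can extract is $\Theta(\sqrt{\log_2 n})$, attained at $p\asymp\sqrt{\log_2 n}$. The constant $\tfrac12$ in the statement (rather than $1$) is exactly the slack needed to absorb the rounding in the choice of $p$ together with the two ``$-1$'' terms coming from the ball-growth and Ramsey estimates.
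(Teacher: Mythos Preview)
Your proof is correct and follows essentially the same approach as the paper: both set up the dichotomy ``small maximum degree $\Rightarrow$ large diameter $\Rightarrow$ long induced (shortest) path'' versus ``large maximum degree $\Rightarrow$ large neighbourhood $\Rightarrow$ Ramsey yields a clique or independent set, hence a clique or star''. The paper fixes the degree threshold $m=n^{1/\sqrt{\log_2 n}}$ directly, while you parametrize by the path length $p$ and then optimize; these are the same computation in different clothing, and your bookkeeping is slightly more explicit (the ball-growth inequality $n\le(1+\Delta)^{p-2}$ and the handling of the ``$-1$'' terms).
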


We prove the fact immediately since the proof is very short; but its proof can be skipped without consequence for the reader's understanding of what follows.

\begin{proof}[Proof of Fact~\ref{fact:ramsey}]
The result is trivial if $n\le 16$ so assume $n > 16$. 
Let $m=n^{1/\sqrt{\log_2 n}} \ge 4$. If $H$ has maximum degree less than $m$ then it has diameter at least $\sqrt{\log_2 n}-1 \ge \tfrac{1}{2}\sqrt{\log_2 n}$ so it contains a path of length at least $\tfrac{1}{2}\sqrt{\log_2 n}$. On the other hand, if $H$ has maximum degree at least $m$ then let $v$ be a vertex of $H$ with degree at least $m$ and let $N_v$ be the set of neighbours of $v$ in $H$. By Ramsey's theorem, and more concretely the diagonal Ramsey upper bound $R(k,k)< 4^k$, the graph $H[N_v]$ contains a set $S$ of size at least
\[
\frac{1}{2}\log_2 m = \frac{1}{2}\frac{\log_2 n}{\sqrt{\log_2 n}} = \frac{1}{2}\sqrt{\log_2 n} 
\]
such that $H[S]$ is either a clique or an independent set. If $H[S]$ is a clique then we are done, and if $H[S]$ is an independent set then $H[S \cup \{v\}]$ is a star of size $|S|+1$ so we are again done.
\end{proof}

Fact~\ref{fact:ramsey} proves to be useful together with the following special case of the upper bound of Theorem~\ref{thm:MAIN}, whose proof appears in Section~\ref{sec:clique star path}.

\begin{prop}\label{prop:MAIN}
Fix a sequence $(H_n,n \ge 1)$ of connected graphs such that, for all $n$,  $H_n$ is either a clique, a star, or a path with $\rV(H_n)=[n]$. 
Then for all $\eps > 0$, with high probability $\cost(\rK_n,H_n) \le 1+\eps$.
\end{prop}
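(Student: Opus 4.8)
The plan is to handle the three cases — clique, star, path — separately, since the structure of the starting graph dictates the natural optimizing strategy. In each case the target is to show that, with high probability, there is an \mstr\ sequence for $(\rK_n,H_n)$ in which every single step replaces a subgraph of weight at most $1+\eps$. The unifying idea is that a step of weight at most $1+\eps$ is extremely flexible: since a single edge has weight at most $1$ deterministically, and a typical light edge has weight $O(1/n)$, we can afford to absorb $\Omega(n)$ light edges into one step. So the game is to organize the rebuilding of $\mstr(\rK_n)$ into moves each of which touches only a ``cheap'' induced subgraph of the current candidate.

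For the clique case, $H_n = K_n$ itself, so $H_n[S]$ is connected for every $S$ and the only constraint is the weight bound. One clean route: let $T = \mstr(\rK_n)$, root it arbitrarily, and process it greedily from the leaves (or along a heavy-path-type decomposition), at each step choosing a set $S$ consisting of a vertex together with a collection of its current neighbours in $H_{i-1}$ that induce low total weight, replacing $H_{i-1}[S]$ by its \mstr. Here one needs a probabilistic input: with high probability every vertex of $\rK_n$ has many incident edges of weight $O(\log n / n)$, so we can always find such a cheap star-like set to contract, and after contracting enough of them the candidate converges to $T$. The bookkeeping — showing the process terminates at $T$ and that we never need a step of weight exceeding $1+\eps$ — is the routine part here; the clique case is the easiest because connectivity is automatic.

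For the star and path cases, the difficulty is that $H_n[S]$ is connected only for rather restricted $S$ (subtrees of the path, or sets containing the star's centre). The strategy is a two-phase one: first use a small number of moves to transform the star or path into something clique-like or at least highly connected on a linear-sized vertex set — e.g., repeatedly apply $\Phi$ to cheap connected pieces so that the candidate accumulates many short \mstr-edges and its induced subgraphs become rich enough — and then run the clique-case argument. For the path, a natural first move is to contract an initial segment $\{1,\dots,k\}$ of light total weight into its \mstr; iterating along the path rebuilds it into a union of small trees and eventually, via overlapping segments, into a spanning structure with the connectivity we need. For the star, every admissible $S$ contains the centre, so the candidate always stays connected through the centre; one exploits this to shuttle edges in and out. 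In both cases the key probabilistic fact is the same: the relevant induced subgraphs of $\rK_n$ contain a near-spanning tree of weight $1+o(1)$ (a quantitative version of Frieze's $\zeta(3)$ result, or just the fact that $\rK_n$ restricted to a linear vertex set has \mstr-weight $O(1)$), so each rebuilding move stays under the $1+\eps$ budget.

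The main obstacle I expect is the path case: a path is the ``least connected'' starting graph, so the set of legal moves is most constrained, and one has to be careful that the sequence of contractions never gets stuck — i.e., that at every stage there remains a cheap connected induced subgraph whose replacement makes progress toward $\mstr(\rK_n)$. Controlling this likely requires choosing the optimizing sets $S_i$ to be intervals of the path (or intervals of the current candidate, which stays path-like for a while), tracking how the induced weight of such intervals evolves, and invoking a concentration estimate to guarantee that intervals of the right length have weight below the threshold with high probability. Once the candidate has been massaged into a graph containing a spanning subgraph on which induced connectivity is plentiful, the remainder reduces to the clique argument. I would also want a preliminary lemma — possibly the diameter bound referenced as Appendix~\ref{app:diam_bd_proof} — bounding the weighted diameter of $\rK_n$ restricted to vertex subsets, to certify that the intermediate \mstr\ pieces we build really do have the small weight the budget demands.
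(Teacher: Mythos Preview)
Your proposal is more a wish-list than a plan, and it misidentifies both the easy case and the real obstacle.

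For the clique, you are making it far harder than necessary. Since $H_n=K_n$ already contains $\mstr(\rK_n)$ as a subgraph, the paper simply removes the excess edges one cycle at a time: pick an edge $uv\notin\rE(\mstr(\rK_n))$ whose \mstr-path $P$ from $u$ to $v$ is shortest, take $S=\rV(P)$; then $H[S]$ is a cycle of weight $\le X_{uv}+\wdiam(\mstr(\rK_n))\le 1+\wdiam(\mstr(\rK_n))$, and $\Phi(H,S)$ drops exactly the edge $uv$. Iterating gives an \mstr\ sequence of weight $\le 1+\wdiam(\mstr(\rK_n))$, and the appendix bound gives $\wdiam(\mstr(\rK_n))\convp 0$. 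There is no need for heavy-path decompositions or greedy star contractions.

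For the star and path, your two-phase intuition is in the right direction, and ``contract a cheap interval first'' is exactly what the paper does: it finds the first index $I$ at which $L=\lfloor\log\log n\rfloor$ consecutive edges all have weight $\le W=1/\log n$, sets $U_0=V(I,I+L)$ (weight $\le WL\to 0$, so the first step is free), and then grows $U_0\subset U_1\subset\cdots$ one vertex at a time, applying the eating algorithm (Corollary~\ref{cor:eating algorithm}) to get
\[
\cost(\rK_n,H_n)\le \max\Big\{WL,\ 1+\max_i \wdiam\big(\mstr(\rK_n[U_i])\big)\Big\}.
\]
But the main obstacle is \emph{not} the one you name. Guaranteeing that some interval has small total weight is trivial. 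The genuine difficulty is that the sets $U_i$ depend on $I$, which depends on the edge weights of $H_n$; so $\mstr(\rK_n[U_i])$ is \emph{not} the \mstr\ of an unconditioned random weighted clique, and you cannot apply the diameter bound of Theorem~\ref{thm:bound on diam(mst)} directly. The paper spends Section~\ref{sec:good sets star path} undoing this conditioning via explicit couplings (Lemma~\ref{lem:I coupling}) and a comparison lemma (Proposition~\ref{prop:reduced MST}) that controls how $\wdiam(\mstr)$ changes when a few edge weights are lowered; the analysis splits into three regimes ($\rU_r^-$, $\rU_r^+$, $\rU_\ell$) according to how large $U_i$ is relative to $L$ and $I$. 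None of this dependency issue appears in your plan, and without it the argument does not close.
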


We combine Proposition~\ref{prop:MAIN} with Fact~\ref{fact:ramsey} as follows. First, choose $V_n \subset[n]$ with $|V_n| \ge \tfrac{1}{2}\sqrt{\log n}$ such that $H_n[V_n]$ is a clique, a star or a path, and consider $\rK_n[V_n]$, the restriction of the weighted complete graph $\rK_n$ to $V_n$. Let $\rS_n'=(S_0',\ldots,S_m')$ be an \mstr\ sequence for $(\rK_n[V_n],H_n[V_n])$ of minimum cost. 
Now consider using the sequence $\rS_n'$ as an optimizing sequence for $(\rK_n,H_n)$. In other words, we set $H_{n,i}=\Phi_{\rK_n}(H_{n,i-1},S_i')$ for $1\le i \le m$. Then $H_{n,m} = \Phi_{\rK_n}(H_0,V_n)$, which is to say that $H_{n,m}$ consists of $H_n$ with $H_n[V_n]$ replaced by $\mstr(\rK_n[V_n])$. Moreover, by Proposition~\ref{prop:MAIN}, $\wt(\rS_n')=\wt(\rK_n,H_n,\rS'_n)=\wt(\rK_n[V_n],H_n[V_n],\rS'_n) \convp 1$; so with high probability we have transformed a ``large'' (i.e. whose size is $\ge \tfrac{1}{2}\sqrt{\log n}$) subgraph of $H_n$ into its minimum spanning tree, using an optimizing sequence of cost at most $1+o_{\mathbb{P}}(1)$.

The next step is to apply a procedure we call the {\em eating algorithm}, described in Section~\ref{sec:eating}. This algorithm allows us to bound the minimum cost of an \mstr\ sequence in terms of the weighted diameters of minimum spanning trees of a growing sequence of induced subgraphs of the input graph, with each graph in the sequence containing one more vertex than its predecessor. In the setting of Theorem~\ref{thm:MAIN}, it allows us to find an \mstr\ sequence with weight at most $1+o_{\mathbb{P}}(1)$ provided that the starting graph already contains a large subgraph on which it is equal to the \mstr. The key result of our analysis of the eating algorithm is summarized in the following proposition.

\begin{prop}\label{prop:eating}
Fix a sequence $(H_n,n \ge 1)$ of connected graphs with $\rV(H_n)=[n]$. 
Fix any sequence of sets $(V_n,n \ge 1)$ such that $V_n \subset [n]$, $|V_n| \to \infty$ as $n \to \infty$, and $H_n[V_n]$ is connected for all $n \ge 1$. Let  $H_n'=\Phi_{\rK_n}(H_n,V_n)$, so that  $H_n'[V_n] = \mstr(\rK_n[V_n])$. Then for all $\eps > 0$, 
with high probability $\cost(\rK_n,H_n')\le 1+\eps$.
\end{prop}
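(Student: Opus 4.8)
The plan is to prove Proposition~\ref{prop:eating} by combining a deterministic analysis of the \emph{eating algorithm}, carried out in Section~\ref{sec:eating}, with a probabilistic bound on the weighted diameters of minimum spanning trees of induced subgraphs of $\rK_n$. Write $s=|V_n|$, so that $s\to\infty$. The eating algorithm fixes an enumeration $v_{s+1},\dots,v_n$ of $[n]\setminus V_n$, sets $W_j=V_n\cup\{v_{s+1},\dots,v_j\}$ for $s\le j\le n$ (so $W_s=V_n$ and $W_n=[n]$), and processes $v_{s+1},\dots,v_n$ one at a time, maintaining the invariant that the current candidate $H$ satisfies $H[W_j]=\mstr(\rK_n[W_j])=:T_j$; once $v_n$ has been processed the candidate is $\mstr(\rK_n)$. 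The transition from $T_{j-1}$ to $T_j$ rests on two elementary facts: inserting a single vertex $v_j$ changes the minimum spanning tree only by deleting $\deg_{T_j}(v_j)-1$ edges of $T_{j-1}$ and inserting the $\deg_{T_j}(v_j)$ edges of $T_j$ at $v_j$; and any vertex set inducing a subtree of a minimum spanning tree $T$ has, as the minimum spanning tree of its induced weighted subgraph, precisely the corresponding subtree of $T$. Together these let us realise the required edge changes — and strip away the ``junk'' edges that $H_n'$ inherits from $H_n$ (those not present in $\mstr(\rK_n)$) incident to $v_j$ — by a sequence of local steps, each acting on a set $S\subseteq W_j$ that induces a path of $T_j$ (supplemented, when needed, by one further edge of $H$ used to keep $H[S]$ connected). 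A careful accounting shows that each such step has weight at most $1+C\,\wdiam(T_j)$ for an absolute constant $C$, so the output of Section~\ref{sec:eating} is a deterministic bound of the form $\cost(\rK_n,H_n')\le 1 + C\max_{s\le j\le n}\wdiam(T_j)$, valid for \emph{any} enumeration of $[n]\setminus V_n$.

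It then suffices to exhibit an enumeration for which $\max_{s\le j\le n}\wdiam(T_j)$ is small with high probability. Here the key observation is that, since the edge weights are i.i.d.\ and do not depend on the vertex labels, for every fixed set $W\subseteq[n]$ with $|W|=m$ the weighted graph $\rK_n[W]$ has the same law as $\rK_m$; in particular $\wdiam(\mstr(\rK_n[W]))$ is distributed as $\wdiam(\mstr(\rK_m))$. From the quantitative estimate of Appendix~\ref{app:diam_bd_proof} one extracts a tail bound $\mathbb{P}(\wdiam(\mstr(\rK_m))>\eps)\le\psi(m,\eps)$ which, for each fixed $\eps>0$, is summable in $m$. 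Taking any enumeration of $[n]\setminus V_n$ that does not depend on the edge weights (e.g.\ the increasing order), a union bound gives $\mathbb{P}(\exists\,j\in[s,n]:\wdiam(T_j)>\eps)\le\sum_{m=s}^{n}\psi(m,\eps)\le\sum_{m\ge s}\psi(m,\eps)$, which tends to $0$ as $n\to\infty$ because $s=|V_n|\to\infty$ and $\psi(\cdot,\eps)$ is summable. Combined with the deterministic bound above, this yields $\cost(\rK_n,H_n')\le 1+C\eps$ with high probability, and since $\eps>0$ is arbitrary the proposition follows.

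The substantive difficulty — and the content of Section~\ref{sec:eating} — is the deterministic construction: one must schedule the local steps so that \emph{every} step has weight at most $1+\eps$ in spite of the arbitrary, possibly dense, junk edges of $H_n'$ outside $V_n$. This forces each step to touch only a short path of the evolving tree $T_j$ plus $O(1)$ further edges of controlled total weight, which in turn requires both the right processing order and the use of the fact that a vertex $v_j$ being absorbed into the already-large correct region $W_{j-1}$ has a cheapest edge into $W_{j-1}$ of weight $o(1)$ (as $|W_{j-1}|\ge s\to\infty$), so that it can be connected, and its junk edges peeled off one step at a time, at negligible additional cost. A secondary technical point is that the weighted-diameter tail bound must be strong enough (summable over scales) to handle all $\Theta(n)$ of the induced subgraphs $\rK_n[W_j]$ at once; this is precisely where the hypothesis $|V_n|\to\infty$ enters.
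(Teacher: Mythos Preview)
Your overall architecture matches the paper's: the deterministic eating algorithm gives
$\cost(\rK_n,H_n')\le 1+\max_j \wdiam(\mstr(\rK_n[W_j]))$, and then a union bound over $j$ using the summable tail bound from Appendix~\ref{app:diam_bd_proof} finishes, with $|V_n|\to\infty$ making the tail sum vanish. That part is fine.

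There is, however, a genuine gap in how you choose the enumeration $v_{s+1},\dots,v_n$. You assert that the deterministic bound is ``valid for \emph{any} enumeration of $[n]\setminus V_n$'' and then propose to use, e.g., the increasing order. This is not true. The eating algorithm requires that at each stage the current graph restricted to $W_j$ is connected: the operation $\Phi_{\rK_n}(\cdot,S)$ does nothing when $H[S]$ is disconnected, so if $v_j$ has no $H_n'$-edge into $W_{j-1}$, you simply cannot touch $v_j$ together with $W_{j-1}$ and the step fails. Your attempted rescue --- that $v_j$ has a cheapest $K_n$-edge into $W_{j-1}$ of weight $o(1)$ --- does not help, because that edge is not in the current candidate graph $H$ and so cannot supply the connectivity that $\Phi$ demands; you cannot ``connect'' $v_j$ through an edge you have not yet earned.

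The fix is exactly what the paper does: use connectivity of $H_n'$ to list $[n]\setminus V_n$ as $v_1,\dots,v_k$ so that each $v_i$ is adjacent in $H_n'$ to $V_n\cup\{v_1,\dots,v_{i-1}\}$. Crucially, because $\Phi_{\rK_n}(H_n,V_n)$ only alters edges inside $V_n$, every edge of $H_n'$ incident to a vertex of $[n]\setminus V_n$ is already an edge of $H_n$; hence this enumeration depends only on $H_n$ and $V_n$, not on the random weights. The sets $W_j$ are therefore deterministic, $\rK_n[W_j]$ is distributed as $\rK_{|W_j|}$, and your union bound goes through unchanged. Once you make this correction, your proposal coincides with the paper's proof (the paper obtains the constant $C=1$ via Corollary~\ref{cor:eating algorithm}).
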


The proof of Proposition~\ref{prop:eating} appears in Section~\ref{sec:eating}. We are now prepared to prove Theorem~\ref{thm:MAIN}, modulo the proofs of Proposition~\ref{prop:MAIN} and Proposition~\ref{prop:eating}.

\begin{proof}[Proof of Theorem~\ref{thm:MAIN}]
We already established the lower bound of the theorem, so it remains to show that for all $\eps > 0$, 
\[
\mathbb{P}\big(\cost(\rK_n,H_n) \leq 1+\eps\big) \longrightarrow 1
\]
as $n \to \infty$. For the remainder of the proof we fix $\eps > 0$.

Using Fact~\ref{fact:ramsey}, let $V_n$ be a subset of $[n]$ with size at least $\tfrac{1}{2}\sqrt{\log_2 n}$ such that $H_n[V_n]$ is a clique, a star or a path.
Write $\rK_n^-=\rK_n[V_n]$ and $H_n^-=H_n[V_n]$, and let $\rS^-_n$ be an \mstr\ sequence for $(\rK_n^-,H_n^-)$ of minimum cost. 
By Proposition~\ref{prop:MAIN}, 
\[
\mathbb{P}\Big(\wt(\rK_n^-,H_n^-,\rS^-_n) \le 1+\eps\Big) \longrightarrow 1
\]
as $n \to \infty$. Moreover, we have $\wt(\rK_n^-,H_n^-,\rS^-_n) = \wt(\rK_n,H_n,\rS^-_n)$: 
the weight of the sequence $\rS_n^-$ is the same with respect to $(\rK_n^-,H_n^-)=(\rK_n[V_n],H_n[V_n])$ as it is with respect to $(\rK_n,H_n)$;
this is easily seen be induction.
It follows that 
\[
\mathbb{P}\Big(\wt(\rK_n,H_n,\rS^-_n) \leq 1+\eps\Big) \longrightarrow 1\, .
\]

Next let $H_n'=\Phi_{\rK_n}(H_n,V_n)$, so $H_n'[V_n]=\mstr(\rK_n[V_n])$. Since $\rS_n-$ is an \mstr\ sequence for $(\rK_n^-,H_n^-)$, this is also the graph resulting from using $\rS^-_n$ as an optimizing sequence for $(\rK_n,H_n)$. Now let $\rS_n'$ be an \mstr\ sequence for $H_n'$ of minimum cost. Since $|V_n| \to \infty$ and $H_n[V_n]$ is connected, it follows from Proposition~\ref{prop:eating} that 
\[
\mathbb{P}\Big(\wt(\rK_n,H_n',\rS_n') \leq 1+\eps\Big) \longrightarrow 1\, .
\]

To conclude, note that the concatenation $\rS_n$ of $\rS^-_n$ and $\rS_n'$ is an \mstr\ sequence for $(\rK_n,H_n)$, and 
\[
\wt(\rK_n,H_n,\rS_n) = \max\Big\{\wt(\rK_n,H_n,\rS^-_n),\wt(\rK_n,H_n',\rS_n')\Big\}\, ,
\]
so $\mathbb{P}(\wt(\rK_n,H_n,\rS_n) \leq 1+\eps)\to 1$ and thus $\mathbb{P}(\cost(\rK_n,H_n) \leq 1+\eps) \to 1$, as required. 
\end{proof}

The remainder of the paper proceeds as follows. In Section~\ref{sec:eating} we describe the eating algorithm and prove Proposition~\ref{prop:eating}, modulo the proof of a key technical input (Theorem~\ref{thm:bound on diam(mst)}), an upper tail bound on the weighted diameter of $\mstr(\rK_n)$, which is postponed to Appendix~\ref{app:diam_bd_proof}. In Section~\ref{sec:clique star path} we prove Proposition~\ref{prop:MAIN} by using the details of the eating algorithm to generate a well bounded sequence of increasing \mstr{}s that are each built from a clique, a star, or a path. We conclude in Section~\ref{sec:conclusion} by presenting the generalization of Theorem~\ref{thm:MAIN} to other edge weight distributions, and by discussing avenues for future research. 

\section{The eating algorithm}\label{sec:eating}

In this section, we prove Proposition~\ref{prop:eating}.
Informally, we prove this proposition by showing that we can efficiently add vertices to an MST of a large subgraph of $K_n$, one at a time, via an optimizing sequence which has a low weight, with high probability.
For a weighted graph $\rG=(V,E,w)$, write $\wdiam(\rG)$ for the weighted diameter of $\rG$,
\[ 
\wdiam(\rG):= \max\Big\{\dist_\rG(u,v):u,v\in V\Big\}\,,
\]
where
\[
    \dist_\rG(u,v):=\min\Big\{\mathop{w}(P):P\mbox{ is a path from $u$ to $v$ in }\rG\Big\}\,.
\]
It is sometimes convenient to write $\wdiam(G)$ for an unweighted graph $G$, where the appropriate choice of weights is clear from context.
Finally, we also introduce the unweighted diameter
\begin{align*}
\diam(G) := \max\Big\{\min\Big\{\big|\rE(P)\big|:\textrm{$P$ is a path from $u$ to $v$ in $G$}\Big\}:u,v\in V\Big\}\,,
\end{align*}
which will be used later in this work (in Section~\ref{sec:good sets star path} and in Appendix~\ref{app:diam_bd_proof}).

The key tool to prove Proposition~\ref{prop:eating} is the following proposition, which will be applied recursively.

\begin{prop}\label{prop:eating algorithm}
Let $\rG=(V,E,w)$ be a generic weighted graph with $V=[n]$ and $\max\{\mathop{w}(e):e \in E)\}\leq1$. Suppose that
$H$ is a spanning subgraph of $G$ and $H[n-1] = \mstr(\rG[n-1])$. 
Then
\begin{align*}
    \cost(\rG,H) \le 1 + \max\Big\{\wdiam\big(\mstr(\rG[n-1])\big),\wdiam\big(\mstr(\rG)\big)\Big\}\,.
\end{align*}
\end{prop}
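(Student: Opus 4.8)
The plan is to exhibit an explicit MST sequence for $(\rG, H)$ whose weight is bounded by the stated quantity. We know $H$ agrees with $\mstr(\rG[n-1])$ on the first $n-1$ vertices; the only task is to "insert" vertex $n$ correctly, i.e. to turn $H$ into $\mstr(\rG)$. First I would recall the basic structure theory of how $\mstr(\rG)$ relates to $\mstr(\rG[n-1])$: adding a single vertex $n$ with its incident edges, the new MST is obtained from $\mstr(\rG[n-1])$ by adding the cheapest edge $e^* = nu^*$ from $n$ to $[n-1]$, and then, for the unique cycle this may create together with... — actually more carefully: $\mstr(\rG)$ restricted to $[n-1]$ is a forest that may differ from $\mstr(\rG[n-1])$, but there is a classical fact that $\mstr(\rG)$ and $\mstr(\rG[n-1])$ differ in a controlled way. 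I would pin down the precise statement: letting $T = \mstr(\rG[n-1])$ and $T' = \mstr(\rG)$, the edge set $T' \triangle T$ consists of the single added edge $e^* = nu^*$ (cheapest edge at $n$) together with at most one edge swap governed by a cycle rule. Concretely, $T' = (T \setminus \{f\}) \cup \{e^*\}$ for an appropriate tree edge $f$, or $T' = T \cup \{e^*\}$ in degenerate cases — I'd make this exact using the cycle property of MSTs.

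The key step is then to realize this transformation with two local moves of controlled weight. Move one: choose $S_1$ to be $\{n\} \cup \{$the two endpoints of a suitable short path$\}$ — more precisely, I want $S_1$ to consist of $n$ together with a path in $T$ connecting the relevant vertices, so that $H[S_1]$ is connected (it is, since on $[n-1]$ we already have the tree $T$ and $n$ is isolated in $H$ — wait, $n$ need not be isolated). Let me reconsider: $H$ is an arbitrary spanning subgraph agreeing with $T$ on $[n-1]$, so $n$ may have many edges in $H$. The clean approach: take $S_1 = \{n, u^*\}$ where $u^*$ is the $\mstr(\rG)$-neighbor of $n$; but $H[\{n,u^*\}]$ is connected only if $nu^* \in \rE(H)$, which we cannot assume. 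So instead take $S_1 = V = [n]$ in one shot? That has weight $w(H[n]) = w(H)$ which is uncontrolled. The right move is: first use $S_1 = \{n\} \cup \{$some vertex adjacent to $n$ in $H\}$... This is getting delicate, and I expect this to be the main obstacle: arranging connectivity of the induced subgraphs $H[S_i]$ while keeping their weights small. The likely resolution is a two-stage argument: (i) a first move on a small set to connect $n$ into the tree cheaply, possibly via the cheapest $H$-edge at $n$, bounding its weight by $1$ since $\max_e w(e) \le 1$; (ii) a second move on a set $S_2$ consisting of $n$, its new neighbor, and a path in $\mstr(\rG[n-1])$ realizing the relevant distance, whose weight is then at most $1 + \wdiam(\mstr(\rG[n-1]))$; and possibly a symmetric consideration giving the $\wdiam(\mstr(\rG))$ term.

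More precisely, here is the structure I would aim for. After the first move $H_1 = \Phi_\rG(H_0, S_1)$, I want $H_1[n-1] = \mstr(\rG[n-1])$ still (so I must choose $S_1 \subset \{n\} \cup \{$few vertices$\}$ in a way that doesn't disturb the tree on $[n-1]$, e.g. $S_1 = \{n, v\}$ for a single $v$ with $nv \in \rE(H)$), and I want $n$ to now be attached to $[n-1]$ by a single edge. Then for the second move, let $u^*$ be the endpoint of the cheapest edge from $n$ in $\rG$; take $S_2 = \{n\} \cup P$ where $P$ is the vertex set of the shortest path in $\mstr(\rG[n-1])$ between $v$ (current attachment point) and $u^*$. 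Then $H_1[S_2]$ is connected (it contains the edge $nv$ and the path $P$ through the tree), and $\wt(\rG, H, \rS, 2) = w(H_1[S_2]) \le w(nv) + \dist_{\mstr(\rG[n-1])}(v, u^*) \le 1 + \wdiam(\mstr(\rG[n-1]))$. After this move, $\Phi_\rG(H_1, S_2)$ restricted to $S_2 \cup ([n-1]\setminus S_2)$... I'd verify that $H_2 = \mstr(\rG)$ by checking that $\mstr(\rG[S_2])$ is exactly the local picture of $\mstr(\rG)$ on $S_2$ and that the rest of $T$ is untouched and consistent — this uses that $\mstr(\rG)$ contains $e^* = nu^*$ and that the cycle-rule swap happens along the tree path we included. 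The $\wdiam(\mstr(\rG))$ term in the max is, I suspect, a safety margin needed when the swapped-out edge $f$ and the path geometry force us to bound a distance in $\mstr(\rG)$ rather than in $\mstr(\rG[n-1])$; I would check both cases of the swap and take whichever diameter bound applies, yielding the maximum. The hard part throughout is the bookkeeping that each $H_i[S_i]$ is connected and that the final graph is exactly $\mstr(\rG)$ — the weight bounds themselves are then immediate from $\max_e w(e) \le 1$ and the definition of weighted diameter.
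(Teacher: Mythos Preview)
Your approach has a genuine structural gap. The claim that $\mstr(\rG)$ is obtained from $T=\mstr(\rG[n-1])$ by adding a single cheapest edge $e^*=nu^*$ and removing at most one edge $f$ is false: vertex $n$ can have arbitrary degree in $\mstr(\rG)$. (Take all edges incident to $n$ very light and all edges inside $[n-1]$ heavy; then $\mstr(\rG)$ is the star at $n$.) Consequently your two-move plan cannot produce $\mstr(\rG)$ in general. There is a second, independent problem: your move $S_1=\{n,v\}$ only touches the single pair $nv$, so every other $H$-edge from $n$ to $[n-1]$ survives in $H_1$; likewise $S_2=\{n\}\cup P$ leaves all edges from $n$ to vertices outside $P$ untouched. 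So even with the correct picture of $\mstr(\rG)$, your sequence would neither insert all needed $n$-edges nor remove the surplus ones.

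The paper handles both issues with substantially more machinery. In the leaf case ($n$ adjacent to a single vertex in $H$) it uses not two moves but $n-1$: for each $i\in[n-1]$, $S_i$ is the current tree-path from $n$ to $i$, so every potential MST edge $in$ is examined, and an inductive argument shows no non-tree edge inside $[n-1]$ is ever introduced, keeping each step's weight at most $1+\wdiam(\mstr(\rG[n-1]))$. For general $H$, the neighbours $v_1,\dots,v_k$ of $n$ are used to form a Voronoi partition $V_1,\dots,V_k$ of $[n-1]$ in $T$; the leaf case is applied inside each $\rG[V_i\cup\{n\}]$, and concatenating these sequences yields a graph containing $\mstr(\rG)$. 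A final cleanup lemma removes the leftover edges one cycle at a time, each step costing at most $1+\wdiam(\mstr(\rG))$; this is precisely where the second term of the maximum enters. Your sketch is missing all three of these ingredients: the per-vertex sweep, the Voronoi decomposition, and the cycle-removal phase.
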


The proof of Proposition~\ref{prop:eating algorithm} occupies the bulk of Section~\ref{sec:eating}; it appears below in Sections~\ref{sec:eating_specialcase} and~\ref{sec:eating_generalcase}.

\begin{cor}[The eating algorithm]\label{cor:eating algorithm}
    Let $\rG=(V,E,w)$ be a weighted graph with $V=[n]$ and $\max\{\mathop{w}(e):e\in E\}\leq1$. Let $H$ be a spanning subgraph of $G$ and fix a non-empty set $U\subset[n]$ for which $H[U]=\mstr(\rG[U])$. Let $U=U_0\subset U_1\subset\ldots\subset U_k=V$ be any increasing sequence of subsets of $V$ such that, for all $0\leq i<k$, $U_{i+1}\setminus U_i$ is a singleton and $H[U_i]$ is connected. Then
    \begin{align*}
        \cost(\rG,H)\leq 1+\max\Big\{\wdiam\big(\mstr(\rG[U_i])\big):0\leq i\leq k\Big\}\,.
    \end{align*}
\end{cor}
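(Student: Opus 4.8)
The plan is to derive the corollary from Proposition~\ref{prop:eating algorithm} by a straightforward induction on $k$, concatenating optimizing sequences one vertex at a time. First I would set up the induction: for $0 \le i \le k$, let $H^{(i)}$ be the spanning subgraph of $G$ obtained from $H$ by replacing $H[U_i]$ with $\mstr(\rG[U_i])$ (equivalently $H^{(i)} = \Phi_\rG(H, U_i)$, using that $H[U_i]$ is connected), so that $H^{(0)}$ agrees with $H$ on $U_0=U$ and $H^{(k)}$ agrees with $H$ on $U_k = V$, i.e.\ $H^{(k)}$ restricted to $V$ is $\mstr(\rG)$ — wait, more carefully: $H^{(k)}[V] = \mstr(\rG[V])=\mstr(\rG)$, so $H^{(k)} = \mstr(\rG)$ itself. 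The claim I would prove by induction on $i$ is that there is an \mstr{} sequence $\rS^{(i)}$ for $(\rG, H)$ whose subgraph sequence ends at $H^{(i)}$ and with $\wt(\rG,H,\rS^{(i)}) \le \max\{\wdiam(\mstr(\rG[U_j])) : 0 \le j \le i\}$ — except this isn't quite an \mstr{} sequence unless $i=k$; so I would instead phrase it as: there is an \emph{optimizing} sequence with that weight bound whose subgraph sequence ends at $H^{(i)}$.

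The inductive step is the crux, and it is where Proposition~\ref{prop:eating algorithm} enters. Given the optimizing sequence $\rS^{(i)}$ ending at $H^{(i)}$, I want to extend it to one ending at $H^{(i+1)}$. The key observation is that $H^{(i)}$ and $H^{(i+1)}$ differ only inside $U_{i+1}$: on $U_{i+1}$, $H^{(i)}$ restricts to $\mstr(\rG[U_i])$ together with the unique vertex $v_{i+1} \in U_{i+1}\setminus U_i$ and whatever $H$-edges join $v_{i+1}$ to $U_i$ (here I need $H[U_{i+1}]$ connected, which is guaranteed by hypothesis), whereas $H^{(i+1)}$ restricts to $\mstr(\rG[U_{i+1}])$. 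So I apply Proposition~\ref{prop:eating algorithm} to the weighted graph $\rG[U_{i+1}]$ (relabelling its vertex set as $[|U_{i+1}|]$ with $v_{i+1}$ playing the role of the last vertex $n$) and to the spanning subgraph $H^{(i)}[U_{i+1}]$, which by construction agrees with $\mstr(\rG[U_i])$ on the first $|U_i|$ vertices. Proposition~\ref{prop:eating algorithm} yields an \mstr{} sequence $\rT^{(i)}$ for $(\rG[U_{i+1}], H^{(i)}[U_{i+1}])$ with weight at most $1 + \max\{\wdiam(\mstr(\rG[U_i])), \wdiam(\mstr(\rG[U_{i+1}]))\}$. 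Using $\rT^{(i)}$ as an optimizing sequence for $(\rG, H^{(i)})$ — which is legitimate because each set in $\rT^{(i)}$ is a subset of $U_{i+1} \subset V$, and the weight of a step computed in $\rG[U_{i+1}]$ equals the weight computed in $\rG$ since the relevant induced subgraphs coincide (this is the same ``easily seen by induction'' fact used in the proof of Theorem~\ref{thm:MAIN}) — transforms $H^{(i)}$ into $H^{(i+1)}$. Concatenating $\rS^{(i)}$ with $\rT^{(i)}$ gives $\rS^{(i+1)}$, whose weight is the maximum of the two pieces' weights, hence at most $1 + \max\{\wdiam(\mstr(\rG[U_j])) : 0 \le j \le i+1\}$, completing the induction.

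Taking $i = k$ gives an optimizing sequence $\rS^{(k)}$ whose subgraph sequence ends at $H^{(k)} = \mstr(\rG)$, so $\rS^{(k)}$ is in fact an \mstr{} sequence for $(\rG, H)$, with $\wt(\rG, H, \rS^{(k)}) \le 1 + \max\{\wdiam(\mstr(\rG[U_i])) : 0 \le i \le k\}$, and the bound on $\cost(\rG,H)$ follows by the definition of cost as the minimum weight over \mstr{} sequences. The main obstacle — really the only non-bookkeeping point — is verifying carefully that a sequence of subsets of $U_{i+1}$ can be reused as an optimizing sequence for the full graph $\rG$ with the step weights unchanged, and that the hypotheses of Proposition~\ref{prop:eating algorithm} (genericity, max edge weight $\le 1$, and the ``$H$ agrees with the MST on the first $n-1$ vertices'' condition) are all inherited by the restricted graph $\rG[U_{i+1}]$ and subgraph $H^{(i)}[U_{i+1}]$ at each stage; the genericity and weight bounds pass to induced subgraphs immediately, and the agreement condition is exactly what the induction hypothesis supplies.
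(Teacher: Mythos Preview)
Your proposal is correct and follows essentially the same approach as the paper: both build intermediate graphs $H^{(i)}$ (the paper calls them $F_i$, defined iteratively as $F_i=\Phi_\rG(F_{i-1},U_i)$, but these coincide with your $\Phi_\rG(H,U_i)$ since changes are nested inside $U_i$), apply Proposition~\ref{prop:eating algorithm} to $(\rG[U_i],H^{(i-1)}[U_i])$ at each step, concatenate the resulting sequences, and invoke the fact that an optimizing sequence on an induced subgraph has the same step weights when viewed in the full graph. The only slip is that your first statement of the inductive invariant omits the ``$1+$'', though you correctly include it from the inductive step onward.
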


\begin{proof}
    Set $F_0=H$ and let $\rS_1,\ldots,\rS_k$ and $F_1,\ldots,F_k$ be constructed inductively as follows. Given $F_{i-1}$, let $\rS_i$ be an \mstr\ sequence of minimal weight for the pair $(\rG[U_i],F_{i-1}[U_i])$ and let $F_i=\Phi_{\rG}(F_{i-1},U_i)$. Note that $F_i[U_i]$ is the last graph of the subgraph sequence corresponding to $\rS_i$. 
    
    By using that an optimizing sequence on $(\rG[U_i],F_{i-1}[U_i])$ can also be seen as an optimizing sequence on $(\rG,F_{i-1})$ of identical weight, we can bound the weight of the global optimizing sequence $\rS$ obtained by concatenating $\rS_1,\ldots,\rS_k$ in that order. Indeed, we have that
    \begin{align*}
        \wt(\rG,H,\rS)=\max\Big\{\wt(\rG[U_i],F_{i-1}[U_i],\rS_i):1\leq i\leq k\Big\}\,.
    \end{align*}
    Moreover, by the definition of $F_{i-1}$, we know that $F_{i-1}[U_{i-1}]=\mstr(\rG[U_{i-1}])$ and by minimality of $\rS_i$ along with Proposition~\ref{prop:eating algorithm}, it follows that for all $1 \le i \le k$, 
    \begin{align*}
        \wt(\rG[U_i],F_{i-1}[U_i],\rS_i)\leq1+\max\Big\{\wdiam\big(\mstr(\rG[U_{i-1}])\big),\wdiam\big(\mstr(\rG[U_i])\big)\Big\}\,.
    \end{align*}
    Since $\cost(\rG,H) \le \wt(\rG,H,\rS)$, 
    combining the last two results provides us with the desired upper bound for $\cost(\rG,H)$.
\end{proof}

The importance of this corollary becomes clear in light of the next theorem, which provides strong tail bounds on the diameter of \mstr s of randomly-weighted complete graphs.

\begin{thm}\label{thm:bound on diam(mst)}
Let $\rK_n=(K_n,\rX)$ be the complete graph with vertex set $[n]$, endowed with independent, Uniform$[0,1]$ edge weights $\rX=(X_e,e \in \rE(K_n))$. 
Then for all $n$ sufficiently large, 
\[
\mathbb{P}\left(\wdiam\big(\mstr(\rK_n)\big) \geq \frac{7 \log^4 n}{n^{1/10}}\right) \le \frac{4}{n^{\log n}}\, .
\]
In particular, $\wdiam(\mstr(\rK_n)) \stackrel{\mathbb{P}}{\longrightarrow} 0$ as $n \to \infty$. 
\end{thm}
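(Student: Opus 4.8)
The plan is to split the bound into a bound on the largest \emph{weight} used by the minimum spanning tree and a bound on the number of \emph{edges along any path} in it (its unweighted diameter), and multiply the two. Concretely, I will show that with probability at least $1-2n^{-\log n}$ every edge of $\mstr(\rK_n)$ has weight at most $\log^2 n/n$, and that with probability at least $1-2n^{-\log n}$ the weight of every path in $\mstr(\rK_n)$ is at most $n^{9/10}\log^2 n$ times that edge-weight bound; together these give the claimed $7\log^4 n/n^{1/10}$ after adjusting constants, and $2+2=4$ accounts for the constant in the probability bound. The first statement is elementary: an edge of weight exceeding $t$ lies in $\mstr(\rK_n)$ if and only if the graph $G_t$ of edges of weight at most $t$ is disconnected, and a first-moment estimate, summing $\binom nk(1-t)^{k(n-k)}$ over $1\le k\le n/2$, shows that for $t$ a suitable constant multiple of $\log^2 n/n$ the graph $G_t$ is connected except with probability $n^{-\Omega(\log n)}$.

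For the diameter I would use a ``skeleton'' decomposition. Fix an intermediate threshold $q$ with $nq=\tfrac9{10}\log n$, so that $G_q$ is distributed as the binomial random graph $G(n,q)$. The key structural identity is that the restriction of $\mstr(\rK_n)$ to edges of weight at most $q$ is exactly the disjoint union, over components $C$ of $G_q$, of $\mstr(\rK_n[V(C)])$ (the Kruskal-forest identity). Hence, writing $c=c(G_q)$ for the number of components of $G_q$, any path $P$ in $\mstr(\rK_n)$ contains at most $c-1$ edges of weight $>q$ (all weight-$>q$ edges of $\mstr(\rK_n)$ together form a tree on the components of $G_q$), and the remaining edges of $P$ split into at most $c$ maximal sub-paths, each living inside some $\mstr(\rK_n[V(C)])$. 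This gives
\[
\wdiam\big(\mstr(\rK_n)\big)\ \le\ c\cdot\Big(\tfrac{\log^2 n}{n}+\max_{C}\wdiam\big(\mstr(\rK_n[V(C)])\big)\Big).
\]
A standard first-moment bound shows that, except with probability $n^{-\Omega(\log n)}$, $G(n,q)$ consists of one giant component $\mathcal G$ together with isolated vertices and components of size at most $\log^2 n$, and that $c\le 2n^{1/10}$ (the count being dominated by the $\approx n^{1/10}$ isolated vertices). The small components contribute $\wdiam\le\log^3 n/n$, which is negligible after multiplying by $c$. So everything reduces to bounding $\wdiam(\mstr(\rK_n[\mathcal G]))$; since $\mathcal G\subseteq G_q$ all its edge weights are at most $q\le\log n/n$, so it suffices to bound the \emph{unweighted} diameter $\diam(\mstr(\rK_n[\mathcal G]))$ by, say, $n^{4/5}$ — or even just by $O(n^{2/3}\log n)$ — with probability $1-n^{-\Omega(\log n)}$.

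The main obstacle is precisely this last step: a tail bound on the diameter of the minimum spanning tree of a polylogarithmically dense giant random graph. I expect to handle it by an argument in the spirit of the analysis of Addario-Berry, Broutin and Reed of the MST of $\rK_n$: pass to the critical window, bound the diameter of the minimum spanning tree of each critical component crudely by its number of vertices, use a large-deviations estimate for component sizes in the critical window showing that the largest such component has size $O(n^{2/3}\log n)$ except with probability $n^{-\log n}$, and then argue that the edges added above criticality (which glue the critical components into the giant) increase the diameter by at most a constant factor, exploiting that above the critical window the relevant cluster graph is well connected. An alternative, more hands-on route is a union bound over candidate paths in $\mathcal G$: after bounding the maximum degree of $G(n,q)$ by $\log^2 n$ (a routine first-moment estimate with failure probability $n^{-\Omega(\log n)}$) there are at most $n(\log^2 n)^{\ell}$ paths of hop-length $\ell$, and one needs a bound on $\mathbb P(Q\subseteq\mstr(\rK_n[\mathcal G]))$ decaying fast enough in $|Q|$; the subtle point is that $\{Q\subseteq\mstr\}$ is not a monotone event in the edge weights, so this requires a genuine combinatorial analysis of when a fixed path can be bottleneck-optimal rather than an off-the-shelf correlation inequality. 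Either way, the generous exponents ($n^{-1/10}$ and $\log^4 n$) mean only crude bounds are needed. Finally I would union bound the failure probabilities over the at most $n$ components of $G_q$ (or over all $\binom n2$ vertex pairs) and tune constants to reach the stated $4/n^{\log n}$.
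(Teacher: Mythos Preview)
Your bound on the maximum MST edge weight via connectivity of $G(n,t)$ is correct and matches the paper. The divergence is in the threshold for the skeleton decomposition. You take $q$ with $nq = \tfrac{9}{10}\log n$, near the connectivity threshold, so the giant $\mathcal G$ contains $n - O(n^{1/10})$ vertices. This means your ``main obstacle'' --- bounding $\diam(\mstr(\rK_n[\mathcal G]))$ with failure probability $n^{-\Omega(\log n)}$ --- is essentially the original problem over again (you have gained only that the relevant edge weights are $\le q$, trading a weighted-diameter bound for an unweighted one on nearly the same graph). You acknowledge this and sketch two plans, but neither is carried through: in Plan~A the assertion that gluing the critical-window clusters ``increases the diameter by at most a constant factor'' is precisely the crux and is unjustified (the tree on the critical clusters could a~priori have diameter comparable to the number of clusters), and in Plan~B you give no estimate for $\mathbb P(Q\subseteq\mstr)$.

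The paper avoids this circularity by placing the threshold much lower: $p = 1/n + 1/n^{11/10}$, barely supercritical. Then the largest component $T^{\max}_{n,p}$ has only $\Theta(n^{9/10})$ vertices, and its contribution to $\wdiam$ is bounded \emph{trivially} by $p\,|T^{\max}_{n,p}| = O(n^{-1/10})$ --- no diameter estimate inside it is needed at all. The work shifts entirely to bounding $L_{n,p}$, the length of any MST-path lying outside $T^{\max}_{n,p}$. For this the paper uses a Prim-type argument: conditionally on $G(n,p)$, the minimum-weight edge leaving any union of non-giant components lands at a uniformly random outside vertex (by exchangeability of the weight-$>p$ edges), hence in $T^{\max}_{n,p}$ with probability at least $n^{-1/10}$; iterating and union-bounding over the starting component gives $\mathbb P\big(L_{n,p} > n^{1/10}\log^2 n\cdot |\mathcal C^{\mathrm{runnerup}}|\big) \le n\,e^{-\log^2 n}$, and one then inputs $|\mathcal C^{\mathrm{runnerup}}|\le n^{4/5}$ from known quantitative component-size bounds. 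This Prim argument is exactly the ingredient your Plan~A would need to become rigorous; your outer decomposition at $q$ is therefore an unnecessary layer, and going directly to a barely-supercritical threshold with this argument is the paper's proof.
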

The proof of Theorem~\ref{thm:bound on diam(mst)} is postponed to Appendix~\ref{app:diam_bd_proof}. 
We now use Corollary~\ref{cor:eating algorithm} and Theorem~\ref{thm:bound on diam(mst)} to prove Proposition~\ref{prop:eating}.

\begin{proof}[Proof of Proposition~\ref{prop:eating}]
Consider any sequence of sets $(V_n,n\geq1)$ with $V_n\subset[n]$ and $|V_n|\to\infty$ as $n \to \infty$ and such that $H_n[V_n]$ is connected for all $n \geq 1$, and let $H'_n=\Phi_{\rK_n}(H_n,V_n)$. Since $H'_n$ is connected, we may list the vertices of $[n]\setminus V_n$ as $v_1,\ldots,v_k$ so that for all $1 \le i \le k$, vertex $v_i$ is adjacent to an element of $V_n \cup\{v_1,\ldots,v_{i-1}\}$. Taking $U_0=V_n$ and $U_i=V_n \cup\{v_1,\ldots,v_i\}$ for $1 \le i \le k$, the sequence $U_0,\ldots,U_k$ satisfies the conditions of Corollary~\ref{cor:eating algorithm} with $\rG=\rK_n$. It follows that 
 \begin{equation}\label{eq:cost_bound}
\cost(\rK_n,H'_n)\leq 1+\max\Big\{\wdiam\big(\mstr(\rK_n[U_i])\big):0\leq i\leq k\Big\}\,.
\end{equation}
Moreover, since $|V_n| \to \infty$ as $n \to \infty$, for $n$ sufficiently large we may apply Theorem~\ref{thm:bound on diam(mst)} to $\rK_n[U_i]$ for each $0 \le i \le k$ and obtain that
\begin{align*}
    \mathbb{P}\Big(\exists i:\wdiam\big(\mstr(\rK_n[U_i])\big)\geq |U_i|^{-\frac{1}{11}}\Big)&\leq\sum_{i=0}^k\mathbb{P}\Big(\wdiam\big(\mstr(\rK_n[U_i])\big)\geq |U_i|^{-\frac{1}{11}}\Big)
    \le\sum_{i=0}^k\frac{1}{|U_i|^2}\,;
\end{align*}
where we have used that $\tfrac{7 \log^4 n}{n^{1/10}} \le \tfrac{1}{n^{1/11}}$ and that $\tfrac{4}{n^{\log n}} < \tfrac{1}{n^2}$ for $n$ large. 
Since $|U_i|=|U_0|+i=|V_n|+i$, it follows that for all $n$ sufficiently large, 
\begin{align*}
    \mathbb{P}\Big(\exists i:\wdiam\big(\mstr(\rK_n[U_i])\big)\geq |U_i|^{-\frac{1}{11}}\Big)&\leq \sum_{s=|V_n|}^{n}\frac{1}{s^2}\le \frac{1}{|V_n|-1}\longrightarrow 0\,.
\end{align*}
In view of \eqref{eq:cost_bound}, this yields that 
\begin{align*}
    \mathbb{P}\Big(\cost(\rK_n,H'_n)\geq 1+\epsilon\Big)\leq\mathbb{P}\Big(\exists i:\wdiam\big(\mstr(\rK_n[U_i])\big)\geq\epsilon\Big)\longrightarrow0\,,
\end{align*}
as desired.
\end{proof}
The remainder of Section~\ref{sec:eating} is devoted to proving Proposition~\ref{prop:eating algorithm}.

\subsection{A special case of Proposition~\ref{prop:eating algorithm}}\label{sec:eating_specialcase}
To prove Proposition~\ref{prop:eating algorithm}, we need to bound $\cost(\rG,H)$ when $H$ is a spanning subgraph of $G$ with $H[n-1]=\mstr(\rG[n-1])$. It is useful to first treat the special case that $H$ only contains one edge which does not lie in $\mstr(\rG[n-1])$, and more specifically 
that $n$ is a leaf and $H$ is a tree. We will later use this case as an input to the general argument.

\begin{prop}\label{prop:eating_simple}
In the setting of Proposition~\ref{prop:eating algorithm}, if $n$ is a leaf of $H$ then 
\begin{align*}
    \cost(\rG,H) \le 1 + \wdiam\big(\mstr(\rG[n-1])\big)\, .
\end{align*}
\end{prop}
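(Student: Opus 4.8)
We are in the setting of Proposition~\ref{prop:eating algorithm}: a generic weighted graph $\rG=(V,E,w)$ with $V=[n]$, all edge weights at most $1$, and $H$ a spanning subgraph with $H[n-1]=\mstr(\rG[n-1])$. Now additionally $n$ is a leaf of $H$, so $H$ is the tree $T'=\mstr(\rG[n-1])$ together with a single pendant edge $e_0 = nv_0$ for some $v_0\in[n-1]$. Write $T=\mstr(\rG)$ for the target. So the author wants an $\mstr$ sequence for $(\rG, H)$ all of whose steps have weight at most $1+\wdiam(T')$.

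**The plan.** The idea I would pursue is a single local move on a carefully chosen vertex set $S$, or a very short sequence of such moves. Since $H$ differs from $T'$ only by the pendant edge at $n$, and $T$ is obtained from $T'\cup\{e_0\}$ by "re-attaching" $n$ optimally, the natural candidate is to take $S$ to be $\{n,v_0\}$ together with the unique $T'$-path from $v_0$ to $v^*$, where $v^*$ is the vertex such that $nv^*\in\rE(T)$ (the optimal attachment point of $n$ in $\rG$). First I would check that $H[S]$ is connected: it consists of the $T'$-subpath between $v_0$ and $v^*$ (which lies in $H$ since $T'\subset H$) plus the edge $e_0=nv_0$, so it is a path, hence connected. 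Its weight is $w(e_0)$ plus the weight of a path in $T'$, which is at most $1 + \wdiam(T')$ since $w(e_0)\le 1$ and any path in $T'$ has weight at most $\wdiam(\mstr(\rG[n-1]))$. So this single step has weight at most $1+\wdiam(T')$, as required. The remaining thing to verify is that after replacing $H[S]$ by $\mstr(\rG[S])$, the resulting spanning subgraph is exactly $T=\mstr(\rG)$.

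**The main obstacle.** The delicate point is precisely that last claim: showing $\Phi_\rG(H,S)=\mstr(\rG)$, i.e. that this one surgery suffices and produces the right tree globally, not just locally. For this I would use the cycle/cut characterization of MSTs. The edges of $T$ and $T'$ agree outside a controlled region: by the standard fact about how the MST changes when one vertex is added, $\mstr(\rG)$ is obtained from $\mstr(\rG[n-1])\cup\{nv^*\}$ possibly after deleting the maximum-weight edge on the cycle created — but in fact, since $n$ has only one incident edge in $T$ (it could have more), one must think in terms of which edges of $T'$ survive in $T$. The cleanest route: characterize $\mstr(\rG[S])$ explicitly. The graph $\rG[S]$ contains all of $[n]$'s relevant structure only on $S$; I would argue that $\mstr(\rG[S])$ restricted appropriately, glued to $T'\setminus \rE(H[S])$, reconstitutes $T$, by checking the MST optimality edge-by-edge (every non-tree edge of the candidate is the max-weight edge on its fundamental cycle). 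Here I'd lean on genericity and on the matroid-exchange fact that the MST of a graph agrees with the MST of any induced subgraph on the edges internal to that subgraph, provided the subgraph is "$\mstr$-closed" in a suitable sense. If a single step does not obviously suffice, the fallback is to first do the step above to get $n$ attached correctly and then run the eating-type cleanup, but I expect that correctly chosen $S$ (the path from $v_0$ through $v^*$, possibly extended to include all of $T'$'s vertices lying on fundamental cycles that change) makes one move enough.

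**Expected structure of the write-up.** So concretely I would: (i) identify $v^*$ with $nv^*\in\rE(\mstr(\rG))$ and let $S$ be the vertex set of the $T'$-path from $v_0$ to $v^*$ together with $n$; (ii) verify $H[S]$ is a path, hence connected, and bound $w(H[S])\le w(e_0)+\wdiam(T')\le 1+\wdiam(T')$; (iii) prove $\mstr(\rG[S])$ is the path $T'[S\setminus\{n\}]$ with $e_0$ replaced by $nv^*$ — or more carefully, identify it and show the resulting global graph $(\rE(H)\setminus\rE(H[S]))\cup\rE(\mstr(\rG[S]))$ satisfies the MST cycle condition for $\rG$, hence equals $T$; (iv) conclude that the one-element sequence $\rS=(S)$ is an $\mstr$ sequence for $(\rG,H)$ of weight at most $1+\wdiam(\mstr(\rG[n-1]))$. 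Step (iii) is where the real content lies; everything else is bookkeeping.
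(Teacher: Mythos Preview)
Your single-step approach has a genuine gap: you assume $n$ has a \emph{unique} neighbour $v^*$ in $T=\mstr(\rG)$, but the hypothesis only says $n$ is a leaf of $H$, not of $T$. You flag this yourself (``it could have more'') and then proceed anyway. When $n$ has degree $d\ge 2$ in $T$, your plan breaks in two places. First, with $S=\{n\}\cup(\text{$T'$-path from $v_0$ to a single $v^*$})$, the graph $\Phi_\rG(H,S)$ is a tree in which $n$ still has degree~$1$, so it cannot equal $T$. Second, the natural fix of enlarging $S$ to contain all $d$ neighbours $v^*_1,\dots,v^*_d$ of $n$ in $T$ forces $H[S]$ to contain the Steiner subtree of $T'$ spanned by $\{v_0,v^*_1,\dots,v^*_d\}$, whose weight can greatly exceed $\wdiam(T')$; so the required bound $w(H[S])\le 1+\wdiam(T')$ fails. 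Your fallback (``then run the eating-type cleanup'') is circular, since that is precisely the proposition being proved.

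The paper avoids this by not trying to guess which $v^*$'s are correct. It uses an $(n-1)$-step sequence: $S_i$ is the vertex set of the path from $n$ to $i$ in the current tree $H_{i-1}$. Because each $S_i$ is a path with endpoint $n$, it contains exactly one edge incident to $n$, and the rest is a path in $H_{i-1}[n-1]$; an induction shows $\rE(H_{i-1}[n-1])\subseteq\rE(T')$, so that subpath has weight at most $\wdiam(T')$ and hence $\wt(\rS,i)\le 1+\wdiam(T')$. Since every vertex $i\in[n-1]$ is visited, every edge $in\in\rE(T)$ lies in $\rG[S_i]$ and hence in $\mstr(\rG[S_i])$, so all such edges end up in $H_{n-1}$, which (being a tree containing $T$) equals $T$. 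Your argument is salvageable in the special case $d=1$ (indeed, your step~(iii) is then correct: every non-$T$ edge of $\rG[S]$ has its fundamental $T$-cycle inside $S$), but for general $d$ you need something like the paper's iterative scheme.
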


The next lemma will be useful in the proof of both the special case and the general case; informally, it states that optimizing sequences never remove MST edges that are already present, and that optimizing sequences do not create cycles. 

\begin{lemma} \label{lem:usefulfacts}
Let $(S_i, 1 \leq i \leq m)$ be an $\mstr$ sequence for $(\rG,H)$ with corresponding spanning subgraph sequence $(H_i, 0 \leq i \leq m)$. Then 
\begin{enumerate}
\item if $e \in \rE(\mstr(\rG))$ and $e \in \rE(H_i)$, then $e \in \rE(H_j)$ for all $i \leq j \leq m$, and
\item if $H_i$ is a tree, then $H_j$ is a tree for all $i \leq j \leq m$.
\end{enumerate}
\end{lemma}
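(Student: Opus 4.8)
The plan is to prove both parts by induction on the step $i$, reducing in each case to understanding a single application of $\Phi_{\rG}(\cdot,S_{i+1})$, and to lean on the basic structure theory of minimum spanning trees — in particular the \emph{cycle rule}: an edge $e$ lies in $\mstr(\rG)$ if and only if $e$ is not the heaviest edge of any cycle of $G$; equivalently, $e=uv \notin \mstr(\rG)$ iff there is a path from $u$ to $v$ in $G$ using only edges lighter than $e$.

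\medskip
\noindent\textbf{Part (1).} It suffices to show that if $e \in \rE(\mstr(\rG))$ and $e \in \rE(H_{i-1})$, then $e \in \rE(H_i)=\rE(\Phi_\rG(H_{i-1},S_i))$; the general statement then follows by iterating. If $e \notin \rE(H_{i-1}[S_i])$ then $e$ is untouched by the replacement and stays in $H_i$, so assume $e=uv$ with $u,v \in S_i$ (and $H_{i-1}[S_i]$ connected, else $H_i=H_{i-1}$ and there is nothing to prove). The edges of $H_i$ incident to $S_i$ are exactly those of $\mstr(\rG[S_i])$, so I must show $e \in \rE(\mstr(\rG[S_i]))$. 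Since $e \in \rE(\mstr(\rG))$, by the cycle rule $e$ is not the heaviest edge on any cycle of $G$; a fortiori $e$ is not the heaviest edge on any cycle of $G[S_i]$ (a subgraph has fewer cycles). Hence $e \in \rE(\mstr(\rG[S_i]))$, as desired.

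\medskip
\noindent\textbf{Part (2).} Again it suffices to treat one step: if $H_{i-1}$ is a tree, then $H_i=\Phi_\rG(H_{i-1},S_i)$ is a tree. If $H_{i-1}[S_i]$ is disconnected then $H_i=H_{i-1}$ is a tree. Otherwise, since $H_{i-1}$ is a tree and $H_{i-1}[S_i]$ is connected, $H_{i-1}[S_i]$ is itself a tree, say on vertex set $S_i$ with $|S_i|-1$ edges. Forming $H_i$ replaces these $|S_i|-1$ edges by the $|S_i|-1$ edges of $\mstr(\rG[S_i])$, so $|\rE(H_i)|=|\rE(H_{i-1})|=n-1$; thus it is enough to check $H_i$ is connected (an $n$-vertex, $(n-1)$-edge connected graph is a tree). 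Connectivity is clear: collapsing the connected set $S_i$ to a point turns $H_{i-1}$ into a connected graph $H_{i-1}/S_i$ which does not depend on the internal structure of $H_{i-1}[S_i]$, and $H_i/S_i = H_{i-1}/S_i$ is the same connected graph; since moreover $\mstr(\rG[S_i])$ connects all of $S_i$ internally, $H_i$ is connected. (One can phrase this via paths: any two vertices of $H_{i-1}$ joined by a path; reroute each maximal subpath through $S_i$ using $\mstr(\rG[S_i])$ instead.)

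\medskip
\noindent I expect no serious obstacle here; the only mild subtlety is being careful that "the edges of $\Phi(H,S)$ meeting $S$ are precisely $\rE(\mstr(\rG[S]))$" uses that $H$ is \emph{spanning} and that $H[S]$ connected is part of the hypothesis for a non-trivial step, and that in Part (2) the edge-count bookkeeping (tree on $S_i$ has exactly $|S_i|-1$ edges, same as its MST) is what makes the replacement preserve the global edge count. Both points are routine. An alternative, slicker proof of (1) avoids the cycle rule entirely: $\mstr(\rG)$ restricted to any connected-in-$\mstr(\rG)$ vertex set $S$ equals $\mstr(\rG[S])$ — but since here we only know $H_{i-1}[S_i]$ is connected, not that $\mstr(\rG)[S_i]$ is, the cycle-rule argument above is the cleanest route.
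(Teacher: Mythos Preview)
Your proof is correct and follows essentially the same approach as the paper. Part (1) is identical in spirit: both argue via the cycle rule that an edge of $\mstr(\rG)$ cannot be the heaviest edge of any cycle in the subgraph $\rG[S_i]$, hence survives the replacement. For Part (2) the paper simply asserts the tree-replacement fact without justification, whereas you supply the edge-count-plus-connectivity argument; this is a welcome elaboration rather than a different method.
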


\begin{proof}
We use the standard fact that if $\rG = (V,E,w)$ is a weighted graph with all edge weights distinct, then $e \in \rE( \mstr(\rG) )$ if and only if $e$ is not the heaviest edge of any cycle in $\rG$. 

Fix $e \in \rE(\mstr(\rG))$ and suppose that $e \in \rE(H_i)$. If the endpoints of $e$ do not both lie in $S_{i+1}$ then clearly $e \in \rE(H_{i+1})$ since $H_i$ and $H_{i+1}$ agree except on $S_{i+1}$. If the endpoints of $e$ both lie in $S_{i+1}$ then since $e$ is not the heaviest edge of any cycle in $\rG$, it is not the heaviest edge of any cycle in $\rG[S_{i+1}]$. Thus $e \in \rE(\mstr(\rG[S_{i+1}]))$, and so again $e \in \rE(H_{i+1})$. It follows by induction that $e \in \rE(H_j)$ for all $i \le j \le m$. 

The second claim of the lemma is immediate from the the fact that if $T$ is any tree, 
$S$ is a subset of $\rV(T)$ such that $T[S]$ is a tree, and $T'$ is another tree with $\rV(T)=S$, then the graph with vertices $\rV(T)$ and edges $(\rE(T)\setminus \rE(T[S]))\cup \rE(T')$ is again a tree. 
\end{proof}

We now assume $\rG$ and $H$ are as in Proposition~\ref{prop:eating_simple}. 
Define an optimizing sequence 
$\rS = (S_i, 1 \leq i \leq n-1)$ 
for $(\rG,H)$ as follows. Let $S_1$ be the set of vertices on the path from $n$ to $1$ in $H_0=H$, and let $H_1=\Phi_\rG(H_0,S_1)$. 
Then, inductively, for $1< i \le n-1$ let $S_i$ be the set of vertices on the path from $n$ to $i$ in $H_{i-1}$ and let $H_i=\Phi_\rG(H_{i-1},S_i)$. Since $H=H_0$ is a tree, by point 2 of Lemma~\ref{lem:usefulfacts} it follows that $H_i$ is a tree for all $i$, so the paths $S_i$ are uniquely determined and the sequence $\rS$ is well-defined.

Proposition~\ref{prop:eating_simple} is now an immediate consequence of the following two lemmas.
\begin{lemma} \label{lem:simplesequence}
$\rS$ is an $\mstr$ sequence for $(\rG,H)$.
\end{lemma}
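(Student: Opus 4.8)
We want to show the optimizing sequence $\rS = (S_i, 1 \le i \le n-1)$ constructed just above is an $\mstr$ sequence for $(\rG, H)$; equivalently, $H_{n-1} = \mstr(\rG)$. The natural invariant to track is which edges of $\mstr(\rG)$ have already been acquired by $H_i$, combined with the fact (Lemma~\ref{lem:usefulfacts}, part 2) that every $H_i$ is a tree. Since a spanning tree that contains every edge of $\mstr(\rG)$ must \emph{be} $\mstr(\rG)$ (both have exactly $n-1$ edges), it suffices to prove that $\rE(\mstr(\rG)) \subseteq \rE(H_{n-1})$, and by part 1 of Lemma~\ref{lem:usefulfacts} acquired MST-edges are never lost, so I only need that each MST-edge is present in \emph{some} $H_i$.

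\textbf{Key steps.} First I would set up notation: let $T^- = \mstr(\rG[n-1])$ and $T = \mstr(\rG)$, and recall $H_0 = H$ has $H_0[n-1] = T^-$ with $n$ a leaf. The first natural claim is that after step $i$, vertex $i$ lies on the ``MST side'' in a strong sense — more precisely, I would aim to show by induction on $i$ that $H_i$ restricted to $\{1,\dots,i\}$ (or perhaps to a suitable growing set) already agrees with $\mstr(\rG)$, or at least that the $\mstr(\rG)$-edges incident to $\{1,\dots,i\}$ are all in $H_i$. The mechanism is: $S_i$ is the path from $n$ to $i$ in the tree $H_{i-1}$; when we replace $H_{i-1}[S_i]$ by $\mstr(\rG[S_i])$, any $\mstr(\rG)$-edge both of whose endpoints lie in $S_i$ gets added (as in the proof of Lemma~\ref{lem:usefulfacts}, an $\mstr(\rG)$-edge is never the heaviest in any cycle, hence lies in $\mstr(\rG[S_i])$). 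So the content is to argue that by the time we ``process'' vertex $i$, the path from $n$ to $i$ in $H_{i-1}$ contains enough vertices to capture the $\mstr(\rG)$-edges we still need. I expect the cleanest formulation is: let $e = uv \in \rE(\mstr(\rG))$; consider the first time $t$ such that both $u,v$ lie in $\{1,\dots,t\}\cup\{n\}$ — then I would argue $u,v$ both lie on the path $S_t$ from $n$ to $t$ in $H_{t-1}$, because $H_{t-1}$ already coincides with $\mstr(\rG)$ on the relevant smaller vertex set (an inner induction), so the tree-path from $n$ to $t$ passes through $u$ and $v$. Hence $e \in \rE(H_t) \subseteq \rE(H_{n-1})$.

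\textbf{Main obstacle.} The subtle point is the inductive claim that $H_{i-1}$, when restricted to $\{1,\dots,i-1\}\cup\{n\}$, is already a subtree of (indeed equal to) $\mstr(\rG[\{1,\dots,i-1\}\cup\{n\}])$, or that at least it contains all of $\mstr(\rG)$'s edges among those vertices so that the $n$-to-$i$ path in $H_{i-1}$ behaves as it does in $\mstr(\rG)$. One has to be careful that taking the path to $i$ in $H_{i-1}$ and MST-ing it does not disturb edges outside the processed region in a way that breaks the induction — here part 1 of Lemma~\ref{lem:usefulfacts} is exactly what saves us, since $\mstr(\rG)$-edges once present stay present, and combined with the tree structure this pins down the path. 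A secondary technical nuisance is handling vertex $n$'s incident $\mstr(\rG)$-edge (if it differs from $n$'s edge in $H_0$) and confirming that it too gets acquired: the vertex $i^*$ that is the $\mstr(\rG)$-neighbour of $n$ will, when processed, force $n$ and $i^*$ onto a common path, acquiring that edge. I would organize the write-up so the inner induction ``$H_{i-1}$ agrees with $\mstr(\rG)$ on $\{1,\dots,i-1,n\}$'' is the main lemma, after which the conclusion $H_{n-1} = \mstr(\rG[n]) = \mstr(\rG)$ is immediate by taking $i = n-1$ and noting all $n$ vertices are then covered.
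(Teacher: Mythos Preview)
Your overall architecture is right and matches the paper: since each $H_i$ is a tree (Lemma~\ref{lem:usefulfacts}, part 2), it suffices to show every edge of $\mstr(\rG)$ lies in some $H_i$, after which part 1 of Lemma~\ref{lem:usefulfacts} carries it to $H_{n-1}$.

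The gap is in your proposed execution. Your inner induction ``$H_{i-1}$ agrees with $\mstr(\rG)$ on $\{1,\dots,i-1,n\}$'' is false in general. Concretely, take $n=4$ with weights $w(12)=1$, $w(13)=2$, $w(23)=3$, $w(24)=4$, $w(34)=5$, $w(14)=10$; then $\mstr(\rG[3])=\{12,13\}$ and $\mstr(\rG)=\{12,13,24\}$. With $H_0=\{12,13,14\}$ (so $4$ is a leaf attached to $1$), step $1$ uses $S_1=\{1,4\}$ and leaves $H_1=H_0$. Now $H_1[\{1,4\}]$ contains the edge $14$, but $\mstr(\rG)[\{1,4\}]$ is empty --- so the invariant already fails at $i=2$. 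Consequently your deduction ``the tree-path from $n$ to $t$ passes through $u$ and $v$'' has no footing: the structure of $H_{t-1}$ on processed vertices need not reflect $\mstr(\rG)$.

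What you are missing is a much simpler dichotomy that makes the induction unnecessary. Any edge $e\in\rE(\mstr(\rG))$ either has $n$ as an endpoint or it does not. If it does not, then $e$ is not the heaviest edge of any cycle in $\rG$, hence not the heaviest edge of any cycle in $\rG[n-1]$, so $e\in\rE(\mstr(\rG[n-1]))=\rE(H_0[n-1])\subseteq\rE(H_0)$; such edges are present from the start. If $e=in$ for some $i\in[n-1]$, then both endpoints $i$ and $n$ lie in $S_i$ by the very definition of $S_i$ (it is the vertex set of a path from $n$ to $i$), so $e\in\rE(\rG[S_i])$, hence $e\in\rE(\mstr(\rG[S_i]))\subseteq\rE(H_i)$. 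This is exactly the paper's proof, and it sidesteps entirely the delicate question of what $H_{i-1}$ looks like on partial vertex sets.
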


\begin{proof}
Since $H_m$ is a tree, it suffices to show that $\mstr(\rG)$ is a subtree of $H_m$.
Let $e \in \rE(\mstr(\rG))$. Then either $e \in \rE(H_0[n-1])$ or $e = in$ for some $i \in [n-1]$. If $e \in \rE(H_0[n-1])$ then $e \in \rE(H_0)$ meaning that, by point 1 of Lemma~\ref{lem:usefulfacts}, we have $e \in \rE(H_m)$. Otherwise, if $e = in$ for some $i \in [n-1]$, then $e \in \rE(\rG[S_i])$ since $S_i$ is the set of vertices on a path from $n$ to $i$. Hence, $e \in \rE(\mstr(\rG[S_i]))$, meaning that $e \in \rE(H_i)$. Once again, by point 1 of Lemma~\ref{lem:usefulfacts}, this implies that $e\in\rE(H_m)$, proving that $\mstr(\rG)$ is a subtree of $H_m$.
\end{proof}

\begin{lemma} \label{lem:simple_weightbound}
$\wt(\rS) \leq 1 + \wdiam(\mstr(\rG[n-1])$
\end{lemma}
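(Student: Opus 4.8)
The plan is to bound the weight $\wt(\rS,i) = \mathop{w}(H_{i-1}[S_i])$ of each step $i$ of the optimizing sequence $\rS$ constructed above. Recall that $S_i$ is the vertex set of the (unique) path from $n$ to $i$ in the tree $H_{i-1}$, so $H_{i-1}[S_i]$ is exactly that path, and $\wt(\rS,i)$ is its total weight. This path consists of the single edge incident to $n$ in $H_{i-1}$, together with a path lying entirely inside $H_{i-1}[n-1]$ from the other endpoint of that edge to the vertex $i$. So the first step is to observe that $\wt(\rS,i) \le 1 + \wdiam\big(H_{i-1}[n-1]\big)$, using that every edge weight is at most $1$ to bound the contribution of the edge at $n$.

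The key remaining claim is that $H_{i-1}[n-1] = \mstr(\rG[n-1])$ for every $i$, so that the diameter term above is always $\wdiam(\mstr(\rG[n-1]))$, independent of $i$. By hypothesis $H_0[n-1] = \mstr(\rG[n-1])$, so I would prove this by induction on $i$: I claim that each application of $\Phi_\rG(\cdot, S_i)$ leaves the induced subgraph on $[n-1]$ unchanged. Indeed, $S_i$ contains $n$ and all other vertices of $S_i$ lie in $[n-1]$; the replacement step removes $\rE(H_{i-1}[S_i])$ and adds $\rE(\mstr(\rG[S_i]))$. The edges of $H_{i-1}[S_i]$ that lie inside $[n-1]$ are those of a path $Q$ in $\mstr(\rG[n-1])$, so it suffices to show that $\mstr(\rG[S_i])$, when restricted to $[n-1]$, equals $\mstr(\rG[n-1])$ restricted to $S_i \cap [n-1]$, and more precisely that the symmetric difference between $H_{i-1}$ and $H_i$ involves only the edge at $n$. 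This is where I expect the main work: one must argue that adding the single vertex $n$ (with its incident edge in $H_{i-1}$) to the tree $\mstr(\rG[n-1])$ and then taking the MST of $\rG[S_i]$ cannot disturb any edge of $\mstr(\rG[n-1])[S_i\cap[n-1]]$. The cleanest route is to use the cycle characterisation from Lemma~\ref{lem:usefulfacts}: since $H_{i-1}$ is a tree containing $\mstr(\rG[n-1])$ on $[n-1]$ and exactly one edge at $n$, the graph $H_{i-1}[S_i]$ is itself a tree (a path), so $\Phi$ replaces a spanning tree of $\rG[S_i]$ by the minimum one; any edge $e \in \rE(\mstr(\rG[n-1]))$ with both endpoints in $S_i$ is not the heaviest edge of any cycle in $\rG$, hence not in $\rG[S_i]$, hence $e \in \rE(\mstr(\rG[S_i]))$ and so $e$ survives — while the only new edge $\mstr(\rG[S_i])$ can contain incident to $n$ is again a single edge (as $\mstr(\rG[S_i])$ is a tree and $n$ has some degree there; one checks it stays a leaf, or at any rate the edges added within $[n-1]$ are already present).

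Assembling these pieces: for each $i$ we get $\wt(\rS,i) \le 1 + \wdiam\big(\mstr(\rG[n-1])\big)$, and taking the maximum over $i$ gives $\wt(\rS) \le 1 + \wdiam\big(\mstr(\rG[n-1])\big)$, as claimed. Combined with Lemma~\ref{lem:simplesequence}, which shows $\rS$ is an \mstr\ sequence, this yields $\cost(\rG,H) \le \wt(\rS) \le 1 + \wdiam(\mstr(\rG[n-1]))$, proving Proposition~\ref{prop:eating_simple}. The one subtlety to be careful about is whether $n$ genuinely remains a leaf throughout — if not, the bound "$1 +$" on the edge(s) at $n$ must be revisited — but since at each step we only reroute the path from $n$ to some vertex $i$, and $\mstr(\rG[S_i])$ attaches $n$ by a single (lightest available) edge, a short argument should confirm $n$ stays a leaf of $H_i$ for all $i$, keeping the additive constant equal to $1$.
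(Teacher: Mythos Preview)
Your overall plan matches the paper's: decompose the path from $n$ to $i$ in $H_{i-1}$ as one edge at $n$ plus a path inside $H_{i-1}[n-1]$, giving $\wt(\rS,i)\le 1+\wdiam(H_{i-1}[n-1])$, and then control $H_{i-1}[n-1]$ inductively. But the inductive step as you wrote it has a genuine gap.

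You assert that any edge $e\in\rE(\mstr(\rG[n-1]))$ with both endpoints in $S_i$ ``is not the heaviest edge of any cycle in $\rG$''. That is false: membership in $\mstr(\rG[n-1])$ only guarantees $e$ is never heaviest on a cycle of $\rG[n-1]$; it says nothing about cycles through $n$. Concretely, if $S_i=\{n,a,b,c\}$ with $H_{i-1}[S_i]$ the path $n\,a\,b\,c$, and $X_{nb}$ is very small, then $ab\in\rE(\mstr(\rG[n-1]))$ can be the heaviest edge of the triangle $n\,a\,b$, so $ab\notin\rE(\mstr(\rG[S_i]))$. Hence edges of $\mstr(\rG[n-1])$ need not survive, $n$ need not remain a leaf, and your invariant $H_i[n-1]=\mstr(\rG[n-1])$ can fail.

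The repair, which is exactly what the paper does, is to argue in the opposite direction: show that no \emph{new} edge inside $[n-1]$ can be created. If $u,v\in S_i\cap[n-1]$ with $uv\notin\rE(H_{i-1})$, then the $u$--$v$ path $P$ in $H_{i-1}$ is a subpath of the $n$-to-$i$ path, avoids $n$, and (by the inductive hypothesis $\rE(H_{i-1}[n-1])\subseteq\rE(\mstr(\rG[n-1]))$) lies in $\mstr(\rG[n-1])$; therefore $uv$ is the heaviest edge on the cycle $P\cup\{uv\}\subset\rG[S_i]$, so $uv\notin\rE(\mstr(\rG[S_i]))$. This yields only the inclusion $\rE(H_i[n-1])\subseteq\rE(\mstr(\rG[n-1]))$, but that is exactly enough: any path in $H_i[n-1]$ is then a path in $\mstr(\rG[n-1])$, so $\wdiam(H_i[n-1])\le\wdiam(\mstr(\rG[n-1]))$.

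One more correction: your worry that the ``$1+$'' term is threatened if $n$ ceases to be a leaf is misplaced. A \emph{simple} path starting at $n$ contains exactly one edge incident to $n$ irrespective of $\deg_{H_{i-1}}(n)$, so that part of the bound is safe; the leaf issue only affects whether the equality $H_i[n-1]=\mstr(\rG[n-1])$ holds, and the argument above shows you do not need it.
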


\begin{proof}
Let $i \in [n-1]$. Notice that the path from $n$ to $i$ in $H_{i-1}$ contains a single edge from $n$ to $[n-1]$. Hence, the weight of this path is bounded from above by $1+\wdiam(H_{i-1}[n-1])$. To prove the lemma it therefore suffices to show that $\rE(H_{i}[n-1]) \subseteq \rE(H_0[n-1]) =\rE( \mstr(\rG[n-1]))$. 

We prove this by induction on $i$, the base case $i=0$ being automatic. For $i > 0$, suppose that $\rE(H_{i-1}[n-1])\subseteq \rE(H_0[n-1])$. Fix any vertices $u,v \in S_i\cap [n-1]$ with $uv \not\in \rE(H_{i-1})$ and let $P$ be the path from $u$ to $v$ in $H_{i-1}$. Then $P$ is a subpath of $H_{i-1}[S_i]$, and so by induction it is also a subpath of $H_0$. Since $H_0[n-1]=\mstr(\rG[n-1])$ it follows that $P$ is a subpath of $\mstr(\rG[n-1])$. This yields that $uv$ is the edge with highest weight on the cycle created by closing $P$, and all the vertices of this cycle lie in $S_i$; so $uv \not\in \rE(\mstr(\rG[S_i]))$ and thus $uv \not\in \rE(H_i)$. 
This shows that $\rE(H_i[S_i]) \subseteq \rE(H_{i-1}[S_i]) \subseteq \rE(H_0[S_i])$. 
 Since the rest of $H_{i-1}[n-1]$ and $H_i[n-1]$ are identical, it follows that $\rE(H_i[n-1])\subseteq \rE(H_0[n-1])$, as required.
\end{proof}

\subsection{The general case of Proposition~\ref{prop:eating algorithm}}\label{sec:eating_generalcase}

We now lift the assumption that $H$ is a tree; in this case, $\rE(H) \setminus \rE(H[n-1])$ could contain up to $n-1$ edges. As a result, the $\mstr$ sequence previously defined in Section~\ref{sec:eating_specialcase} does not provide us with the desired cost, since a path from $n$ to $i \in [n-1]$ might contain additional edges with $n$ as an endpoint, increasing the weight of the sequence. Thus, we require a more careful method. Informally, our approach is to first apply the method from the previous section to a sequence of subgraphs of $H[n-1]$, each of which is only joined to the vertex $n$ by a single edge, but together which contain all the edges from $n$ to $[n-1]$. We show that this yields a graph which contains the \mstr\ of $G$. We then prove that any cycles in the resulting graph can be removed at a low cost.

Let $\rG = (V,E,w)$ be a generic weighted graph with $V = [n]$ and let $H$ be a spanning subgraph of $G$ with $H[n-1] = \mstr(\rG[n-1])$. Let $\{v_1n,\dots,v_kn\} \subseteq \rE(H)$ be the set of edges in $H$ with $n$ as an endpoint, and for $1 \le i \le k$ let 
\[
V_i = \bigg\{v \in [n-1]: \dist_{(H[n-1],w)}(v_i,v) = \min\Big\{\dist_{(H[n-1],w)}(v_i,v_j):1 \le j \le k\Big\}\bigg\}. 
\]
That is to say, $(V_i,1 \le i \le k)$ is the Voronoi partition of $[n-1]$ in $H[n-1]$ with respect to the vertices $v_1,\ldots,v_k$; it is indeed a partition since $\rG$ is generic.

Note that since $H[n-1]=\mstr(\rG[n-1])$ it follows that $H[V_i]=\mstr(\rG[V_i])$ for any $1\leq i\leq k$. Moreover, vertex $n$ has degree one in $H[V_i\cup \{n\}]$. Using Proposition~\ref{prop:eating_simple}, let $\rS_i=(S_{i,j},1\leq j\leq m_i)$ be an \mstr\ sequence for $(\rG[V_i\cup\{n\}],H[V_i\cup\{n\}])$ with weight less than $1+\wdiam(\mstr(\rG[V_i]))\leq 1+\wdiam(\mstr(\rG[n-1]))$, and write $(H_{i,j},0\leq j\leq m_i)$ for the corresponding subgraph sequence.
Now set $m=m_1+\ldots+m_k$ and let $\rS^*=(S^*_1,\ldots,S^*_m)$ be formed by concatenating $\rS_1,\ldots,\rS_m$, so 
\[
\rS^* = (S_{1,1},\ldots,S_{1,m_1},\ldots,S_{k,1},\ldots,S_{k,m_k})\, ,
\]
and let $(H_0^*,\ldots,H_m^*)$ be the subgraph sequence corresponding to $\rS^*$. 

\begin{lemma} \label{lem:step1_general}
We have $\mstr(\rG) \subseteq H_m^*$, and 
$\wt(\rS^*) \le 1 + \diam(\mstr(\rG[n-1]))$. 
\end{lemma}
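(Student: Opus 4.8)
We need to show two things about $H_m^*$: that it contains $\mstr(\rG)$, and that the weight of the concatenated sequence $\rS^*$ is at most $1 + \diam(\mstr(\rG[n-1]))$ — wait, actually re-reading, I think the intended bound is $1+\wdiam(\mstr(\rG[n-1]))$, since each $\rS_i$ was chosen with weight at most $1+\wdiam(\mstr(\rG[V_i]))$. Let me write this carefully.

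The plan is as follows.

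First, for the containment $\mstr(\rG) \subseteq H_m^*$: take any edge $e \in \rE(\mstr(\rG))$. Either $e \in \rE(\rG[n-1])$, or $e = v_i n$ for some $i$ (since the only edges of $H$ incident to $n$ are the $v_i n$, and $\mstr(\rG) \subseteq H$... hmm, wait — is $\mstr(\rG) \subseteq H$? No, that's not given). Let me reconsider: an edge $e$ of $\mstr(\rG)$ incident to $n$ need not be one of the $v_i n$. So I need a different argument.

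Let me restart the containment argument. The key is Lemma~\ref{lem:usefulfacts}(1): once an \mstr{} edge appears in the subgraph sequence it stays. So it suffices to show every $e \in \rE(\mstr(\rG))$ appears in some $H_j^*$. Split into two cases. Case 1: $e \in \rE(\rG[n-1])$. Since $\mstr(\rG)$ restricted to $[n-1]$ is a forest whose edges are non-heaviest on every cycle of $\rG$, hence also of $\rG[n-1]$, we get $e \in \rE(\mstr(\rG[n-1])) = \rE(H[n-1]) = \rE(H_0^*)$, and we are done by Lemma~\ref{lem:usefulfacts}(1). Case 2: $e = vn$ for some $v \in [n-1]$. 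Let $v \in V_i$. I claim $e \in \rE(\mstr(\rG[V_i \cup \{n\}]))$: indeed $e$ is not the heaviest edge on any cycle of $\rG$, so not on any cycle of $\rG[V_i\cup\{n\}]$, giving the claim. But wait — I need $e$ to actually show up in the subgraph sequence for the block $\rS_i$. By Lemma~\ref{lem:simplesequence}, $\rS_i$ is an \mstr{} sequence for $(\rG[V_i\cup\{n\}], H[V_i\cup\{n\}])$, so $\mstr(\rG[V_i\cup\{n\}]) \subseteq H_{i,m_i}$, hence $e \in \rE(H_{i,m_i})$. Now I must check this persists through the later blocks $\rS_{i+1},\dots,\rS_k$ in the global sequence: since $e \in \rE(\mstr(\rG))$, this is exactly Lemma~\ref{lem:usefulfacts}(1) applied to the full sequence $\rS^*$. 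There is one subtlety: in the global sequence $\rS^*$, does block $i$ still produce $e$? When we run $\rS_i$ as part of $\rS^*$, the starting graph for that block is $H^*_{m_1+\dots+m_{i-1}}$, not $H$. I need to argue that $H^*_{m_1+\dots+m_{i-1}}$ still agrees with $H[V_i\cup\{n\}]$ on the relevant structure, or more robustly, that regardless of the starting graph each block still creates all edges of $\mstr(\rG[V_i\cup\{n\}])$. The cleanest fix: the $V_i$ are disjoint, block $j$ for $j<i$ only modifies edges inside $V_j \cup \{n\}$; the only vertex shared across blocks is $n$. So the edges of $H[V_i]=\mstr(\rG[V_i])$ are untouched before block $i$ starts — but the edges $v_\ell n$ with $v_\ell \in V_i$ might have been... no, block $j<i$ only touches edges in $V_j \cup \{n\}$, and an edge $v_\ell n$ with $v_\ell \in V_i$ is not in $V_j\cup\{n\}$ since $V_i \cap V_j = \emptyset$. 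So in fact $H^*_{m_1+\dots+m_{i-1}}[V_i\cup\{n\}] = H[V_i\cup\{n\}]$ exactly, and block $i$ of $\rS^*$ behaves identically to $\rS_i$ on its own. Hence $e$ appears, and persists by Lemma~\ref{lem:usefulfacts}(1). This completes the containment.

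Second, the weight bound. Each block $\rS_i$ was constructed (via Proposition~\ref{prop:eating_simple}, i.e. Lemma~\ref{lem:simple_weightbound}) with weight at most $1 + \wdiam(\mstr(\rG[V_i])) \le 1 + \wdiam(\mstr(\rG[n-1]))$ — the last inequality holding because $H[V_i] = \mstr(\rG[V_i])$ is a subtree of $H[n-1] = \mstr(\rG[n-1])$, so distances only shrink. But again I must check the weight is computed correctly in the global sequence. The weight of step $(i,j)$ in $\rS^*$ is $w(H^*_{\cdot}[S_{i,j}])$ where $H^*_\cdot$ is the current global graph. By the disjointness argument above, when block $i$ runs inside $\rS^*$ its current graph restricted to $V_i \cup \{n\}$ equals exactly the graph $H_{i,j-1}$ from the standalone run of $\rS_i$, and since $S_{i,j} \subseteq V_i \cup \{n\}$, the step weights agree. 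Hence $\wt(\rS^*) = \max_i \wt(\rS_i) \le 1 + \wdiam(\mstr(\rG[n-1]))$.

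**Main obstacle.** The substantive point — and the thing I'd want to nail down carefully rather than wave at — is the bookkeeping that running $\rS_i$ as a sub-block of the concatenated sequence $\rS^*$ behaves exactly as it does in isolation. This hinges entirely on the disjointness of the Voronoi cells $V_1,\dots,V_k$ and the fact that the only common vertex is $n$, whose incident edges in any $H[V_i\cup\{n\}]$ all go into $V_i$. Everything else (persistence of \mstr{} edges, the single-edge-to-$n$ structure, the weight estimate) is a direct appeal to Lemma~\ref{lem:usefulfacts}, Lemma~\ref{lem:simplesequence}, and Lemma~\ref{lem:simple_weightbound} already established for the tree case. I'd also double-check whether the statement really wants $\diam$ or $\wdiam$ on the right-hand side; the argument naturally produces $\wdiam$, and since all edge weights are at most $1$ one has $\wdiam \le \diam$, so the stated bound with $\diam$ follows a fortiori.
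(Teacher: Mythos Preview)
Your proposal is correct and follows essentially the same approach as the paper: split edges of $\mstr(\rG)$ into those within $[n-1]$ (already in $H_0^*$ via $H[n-1]=\mstr(\rG[n-1])$) and those incident to $n$ (produced in block $i$ because disjointness of the Voronoi cells gives $H^*_{m_1+\cdots+m_{i-1}}[V_i\cup\{n\}]=H[V_i\cup\{n\}]$), then invoke Lemma~\ref{lem:usefulfacts}(1) for persistence; the weight bound likewise transfers the per-block estimate to the concatenation via the same disjointness observation. Your remark on $\diam$ versus $\wdiam$ is also correct---the argument naturally yields $\wdiam$, and $\wdiam\le\diam$ under the hypothesis that all edge weights are at most~$1$ (indeed the paper uses $\wdiam$ everywhere else, so this is likely a typo).
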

\begin{proof}
First, by assumption, $H_0[n-1] = \mstr(\rG[n-1])$. Since $\mstr(\rG)[n-1]$ is a subgraph of $\mstr(\rG[n-1])$, point 1 of Lemma~\ref{lem:usefulfacts} implies that $\mstr(\rG)[n-1]$ is a subgraph of $H^*_i$ for all $i$, so in particular of $H^*_m$. 

Next, since $V_1,\ldots,V_k$ are disjoint, we have $S_{i,j} \cap S_{i',j'} = \{n\}$ whenever $i \ne i'$, and it follows that $H^*_{m_1+\ldots+m_{i-1}}[V_{i}\cup\{n\}] = H[V_i \cup \{n\}]$ for all $1 \le i \le k$. This implies that 
$H^*_{m_1+\ldots+m_{i-1}+j}[V_{i}\cup\{n\}]=H_{i,j}$ for each $1 \le j \le m_i$, so in particular $H^*_{m_1+\ldots+m_i}[V_i\cup\{n\}]=\mstr(\rG[V_i \cup \{n\}])$.

Now fix any edge $vn$ of $\mstr(\rG)$. Then $v \in V_i$ for some $1 \le i \le k$, so $vn \in\rE(\mstr(\rG[V_{i}\cup\{n\}]))$. It follows that $vn \in H^*_{m_1+\ldots+m_i}$, and thus by point 1 of Lemma~\ref{lem:usefulfacts} that $vn$ is an edge of $H^*_m$. Therefore all edges of $\mstr(\rG)$ are edges of $H_m^*$, as required. 

Finally, the bound on the weight of the sequence is immediate by the definition of $\rS^*$ and by using that $\wt(\rG[V_i\cup\{n\}],H[V_i\cup\{n\}],\rS_i)=\wt(\rG,H,\rS_i)$.
\end{proof}

We are now left to deal with the edges $\rE(H^*_m) \setminus \rE(\mstr(\rG))$. This is taken care of in the following lemma.
\begin{lemma}\label{lem:step2_general}
Let $\rG=(V,E,w)$ be a generic weighted graph with $V=[n]$ and with all edge weights at most $1$, and let $H$ be a subgraph of $G$ such that $\mstr(\rG)$ is a subgraph of $H$. Write $k=|\rE(H)|-(n-1)$. Then there exists an \mstr\ sequence $\rS'=(S'_1,\ldots,S'_k)$ with 
\[
\wt(\rS') \le 1 + \wdiam\big(\mstr(\rG)\big)\, .
\]
\end{lemma}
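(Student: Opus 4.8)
The plan is to remove the $k = |\rE(H)| - (n-1)$ non-MST edges one at a time, where at each step we cover a single ``surplus'' edge $e = uv$ together with the tree-path joining $u$ to $v$ in the current MST, and rely on the genericity of $\rG$ to guarantee that applying $\Phi$ to that vertex set deletes $e$ (since $e$, not lying in $\mstr(\rG)$, must be the heaviest edge on the cycle it closes with the MST path). The crucial structural point, provided by Lemma~\ref{lem:usefulfacts}, is that once $\mstr(\rG) \subseteq H$, every graph in the sequence continues to contain $\mstr(\rG)$ (point 1), so the ``MST path from $u$ to $v$'' is well-defined at every step and the number of surplus edges decreases by exactly one each time. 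Thus after $k$ steps the graph is exactly $\mstr(\rG)$, giving an \mstr\ sequence of length $k$ as required.

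In more detail, I would proceed inductively. Write $H = H_0$ and let $f_1, \ldots, f_k$ be an enumeration of $\rE(H_0) \setminus \rE(\mstr(\rG))$. Suppose $H_{i-1}$ has been constructed with $\mstr(\rG) \subseteq H_{i-1}$ and $\rE(H_{i-1}) \setminus \rE(\mstr(\rG)) = \{f_i, \ldots, f_k\}$; note $|\rE(H_{i-1})| = (n-1) + (k-i+1)$, so $H_{i-1}$ has exactly $k-i+1$ cycles' worth of surplus and in particular is connected. Write $f_i = u_iv_i$, let $P_i$ be the unique $u_i$–$v_i$ path in $\mstr(\rG)$, and set $S'_i = \rV(P_i)$. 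Since $\mstr(\rG) \subseteq H_{i-1}$, the induced graph $H_{i-1}[S'_i]$ is connected, so $H_i := \Phi_\rG(H_{i-1}, S'_i)$ replaces $H_{i-1}[S'_i]$ by $\mstr(\rG[S'_i])$. Here I need the small observation that $\mstr(\rG[S'_i]) = P_i$: the path $P_i$ is a spanning tree of $\rG[S'_i]$, and every edge of $P_i$ lies in $\mstr(\rG)$ hence is not the heaviest edge of any cycle of $\rG$, a fortiori not of $\rG[S'_i]$, so $P_i \subseteq \mstr(\rG[S'_i])$ and equality follows by counting edges. Consequently $\Phi$ deletes from $H_{i-1}[S'_i]$ precisely the edges of $H_{i-1}[S'_i]$ outside $P_i$ — in particular it deletes $f_i$ (which lies in $S'_i \times S'_i$ by construction) — while keeping all edges of $\mstr(\rG)$ inside $S'_i$ (consistent with Lemma~\ref{lem:usefulfacts}, point 1). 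Outside $S'_i$ nothing changes. Hence $\mstr(\rG) \subseteq H_i$ and $\rE(H_i) \setminus \rE(\mstr(\rG)) \subseteq \{f_{i+1}, \ldots, f_k\}$; since each $\Phi$-step can only remove edges when the vertex set is a path's vertex set spanning a path plus chords (equivalently, since $|\rE(H_i)| \ge n-1$ always and we have removed at least the edge $f_i$), in fact $\rE(H_i)\setminus\rE(\mstr(\rG)) = \{f_{i+1},\ldots,f_k\}$ exactly. After step $k$ we get $\rE(H_k) = \rE(\mstr(\rG))$, so $\rS' = (S'_1, \ldots, S'_k)$ is an \mstr\ sequence.

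For the weight bound: at step $i$, $\wt(\rS', i) = \mathop{w}(H_{i-1}[S'_i])$, and $H_{i-1}[S'_i]$ consists of the path $P_i$ (a subpath of $\mstr(\rG)$, hence of weight at most $\wdiam(\mstr(\rG))$) together with possibly some chords among the vertices of $P_i$. This is the one genuinely delicate point: a priori $H_{i-1}[S'_i]$ could carry several surplus edges, so $\mathop{w}(H_{i-1}[S'_i])$ need not be bounded by $\mathop{w}(P_i) + 1$. I would handle this by choosing the enumeration $f_1, \ldots, f_k$ more carefully, or by re-selecting the surplus edge to eliminate at each step so that $S'_i$ contains only one surplus edge — for instance, always pick $f_i$ to be a surplus edge whose MST path $P_i$ is inclusion-minimal (contains no other surplus edge's endpoints forming a shorter chord), which is possible since surplus edges that are ``innermost'' with respect to the tree metric exist. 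Then $H_{i-1}[S'_i] = P_i \cup \{f_i\}$, so $\wt(\rS', i) \le \mathop{w}(P_i) + \mathop{w}(f_i) \le \wdiam(\mstr(\rG)) + 1$, and taking the max over $i$ gives $\wt(\rS') \le 1 + \wdiam(\mstr(\rG))$. Verifying that an innermost surplus edge always exists (and that eliminating it does not create new surplus edges — which it cannot, since $\Phi$ only removes edges here as all of $\mstr(\rG)$ is retained) is where I expect to spend the most care; everything else is bookkeeping on top of Lemma~\ref{lem:usefulfacts}.
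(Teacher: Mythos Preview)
Your proposal is correct and matches the paper's approach: at each step choose a surplus edge $uv$ whose MST-path $P$ is minimal (the paper uses ``shortest in number of edges'', which forces $H[\rV(P)]$ to be exactly the cycle $P\cup\{uv\}$), apply $\Phi$ to $\rV(P)$ to remove just $uv$, and induct on the number of surplus edges. The paper's writeup is a bit more streamlined---it goes straight to the minimal choice and inducts on $|\rE(H)|$ rather than first fixing an enumeration $f_1,\ldots,f_k$---but the content is the same, and your justification that $\mstr(\rG[S'_i])=P_i$ via the cycle rule is in fact cleaner than the paper's one-line appeal.
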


\begin{proof}
If $H$ is a tree then there is nothing to prove, so assume $G$ contains at least one cycle (so $k \ge 1$). In this case there exist vertices $u,v$ which are not adjacent in $\mstr(\rG)$ but are joined by an edge in $H$; choose such $u$ and $v$ so that the length (number of edges) on the path $P$ from $u$ to $v$ in $\mstr(\rG)$ is as small as possible. Let $S=\rV(P)$ be the set of vertices of the path $P$; then $H[S]$ is a cycle (by the minimality of the length of $P$), and $uv$ is the edge with largest weight on $H[S]$. It follows that $\mstr(\rG[S])=P$, so 
$\Phi_\rG(H,S)$ has edge set $E=\rE(H)\setminus\{uv\}$. Moreover, since $P$ is a path of $\mstr(\rG)$, it follows that  \[
\mathop{w}(H[S])=\mathop{w}(uv)+\wt(P) \le 1+\wdiam(\mstr(\rG))\, .
\]
Since $\Phi_\rG(H,S)$ contains $\mstr(G)$ but has one fewer edge than $H$, the result follows by induction.
\end{proof}

We now combine Lemmas~\ref{lem:step1_general} and \ref{lem:step2_general} to conclude the proof of Proposition~\ref{prop:eating algorithm}.

\begin{proof}[Proof of Proposition~\ref{prop:eating algorithm}]
Let $\rS^*=(S_1^*,\ldots,S_m^*)$ be the optimization sequence defined above Lemma~\ref{lem:step1_general}, and let $(H_0^*,\ldots,H_m^*)$ be the corresponding subgraph sequence. By that lemma, $\mstr(\rG)$ is a subgraph of $H_m^*$ and $\wt(\rS^*) \le 1 +\wdiam(\mstr(\rG[n-1]))$. 

Next let $\rS'=(S_1',\ldots,S_k')$ be an \mstr\ sequence for $(\rG,H_m^*)$ of weight at most $1+\wdiam(\mstr(\rG))$; the existence of such a sequence is guaranteed by Lemma~\ref{lem:step2_general}. Then the concatenation 
\[
\rS=(S_1^*,\ldots,S_m^*,S_1',\ldots,S_k')
\]
of $\rS^*$ and $\rS'$ is an \mstr\ sequence for $(\rG,H)$, of weight at most 
\[
\wt(\rG,H,\rS)\leq 1+ \max\Big\{\wdiam\big(\mstr(\rG[n-1])\big),\wdiam\big(\mstr(\rG)\big)\Big\},
\]
and the desired bound on $\cost(\rG,H)$ follows. 
\end{proof}

\section{MST sequences for the the clique, the star, and the path}\label{sec:clique star path}

This section is aimed at proving Proposition~\ref{prop:MAIN}. We start by proving the result in the case of the clique, since it is straightforward using the result of Lemma~\ref{lem:step2_general}. After that, the case of the star and the path are covered together; the proof in those cases uses the eating algorithm, Corollary~\ref{cor:eating algorithm}, to find adequate sequences of increasing subsets on which to build increasing sequences of \mstr s.

\begin{proof}[Proof of Proposition~\ref{prop:MAIN} (Case of the clique)]
    Using Lemma~\ref{lem:step2_general}, since $\mstr(\rK_n)$ is a subgraph of $H_n=K_n$, it follows that
    \begin{align*}
        \cost(\rK_n,H_n)\leq1+\wdiam\big(\mstr(\rK_n)\big)\,.
    \end{align*}
    By Theorem~\ref{thm:bound on diam(mst)} we have $\wdiam(\mstr(\rK_n)) \convp 0$, and the result follows.
\end{proof}

\subsection{MST sequences for the star and the path}

In this section, we assume that $H_n$ is either a star or a path. If $H_n$ is a star, then by relabeling we may assume $H_n$ has center $n$, so has edge set $\{e_1,\dots,e_{n-1}\}$ with $e_i=in$; call this star $\Star_n$. If $H_n$ is a path, then by relabeling we may assume $H_n$ is the path $\Path_n=12\dots n$, so has edge set $\{e_i,\dots,e_{n-1}\}$ with $e_i=i(i+1)$. In either case, with this edge labeling, for any $1 \leq i < j \leq n-1$, the set $V(i,j)$ defined as the endpoints in $\{e_i,\dots,e_{j-1}\}$ is connected in $H_n$. Note that $V(i,j)=\{i,\dots,j-1\} \cup \{n\}$ when $H_n$ is a star and $V(i,j)=\{1,\dots,j\}$ when $H_n$ is a path, and in both cases $|V(i,j)|=j-i+1$. For the remainder of the section, it might be helpful to imagine that $H_n$ is the path, $12\dots n$.

Recall that $\rX=(X_e,e \in \rE(K_n))$ is a set of independent $\mathrm{Uniform}[0,1]$ random variables. For $\rW\in(0,1)$ and $2\leq\rL<n-1$, let 
\begin{align}
    \rI=\rI(\rW,\rL)=(n-\rL) \wedge \min\Big\{i:\forall i\leq j<i+\rL, X_{e_j}\leq \rW\Big\}\,. \label{eq:def I}
\end{align}
Note that $\rI$ is a function of $\rX$ and more precisely that
\begin{align*}
    \{\rI\leq k\}\in\sigma\Big(\big\{X_{e_i}\leq\rW\big\},1\leq i<k+\rL\Big)\,,
\end{align*}
where $\sigma(X)$ is the $\sigma$-algebra generated by $X$.

Next, let $\rU=\rU(\rI)=(U_i,0\leq i< n-\rL)$ be the sequence of sets defined as follows.
\begin{align}
    (U_0,\ldots,U_{n-\rL-1})=\Big(V(\rI,\rI+\rL),\ldots,V(\rI,n),V(\rI-1,n),\ldots,V(1,n)\Big)\,. \label{eq:definition of U}
\end{align}
In words, $U_0$ is the set of vertices that belong to the edges $e_\rI,\ldots,e_{\rI+\rL-1}$ (that is $V(\rI,\rI+\rL)$); then we sequentially build $U_1,\ldots,U_{n-\rL-1}$ by first adding the vertices belonging to $e_{\rI+\rL},\ldots,e_{n-1}$, then adding the vertices belonging to $e_{\rI-1},\ldots,e_1$; see Figure~\ref{fig:I and U} for a representation of $\rI$ and $\rU$.

\begin{figure}[htb]
    \centering
    \begin{tikzpicture}
        \node[anchor=south west,block,draw,fill=lightblock!30!white,inner sep=0.2cm,rounded corners] at (-3,1){$\rW=0.2\hspace{0.5cm}\rL=3$};
        
        \draw[line width=0.05cm, theme] (1,0) -- (9,0);
        \foreach \n in {1,...,9}{
            \node[draw,circle,theme,line width=0.05cm, fill=white, scale=1.3] at (\n,0){};
            \node[theme,scale=0.9] at (\n,0){$\mathbf{\n}$};
        }
        \node[lighttheme,scale=0.7,anchor=south] at (1.5,0){$\mathbf{0.6}$};
        \node[lighttheme,scale=0.7,anchor=south] at (2.5,0){$\mathbf{0.9}$};
        \node[lighttheme,scale=0.7,anchor=south] at (3.5,0){$\mathbf{0.5}$};
        \node[lighttheme,scale=0.7,anchor=south] at (4.5,0){$\mathbf{0.1}$};
        \node[lighttheme,scale=0.7,anchor=south] at (5.5,0){$\mathbf{0.2}$};
        \node[lighttheme,scale=0.7,anchor=south] at (6.5,0){$\mathbf{0.1}$};
        \node[lighttheme,scale=0.7,anchor=south] at (7.5,0){$\mathbf{0.1}$};
        \node[lighttheme,scale=0.7,anchor=south] at (8.5,0){$\mathbf{0.7}$};
        \node[anchor=east,scale=1.2] at (0.5,0){$(\textcolor{theme}{H_n},\textcolor{lighttheme}{w})=$};
        \node[anchor=east,scale=0.8, text width=2.5cm,align=flush right] at (12,0){the ordered line with random edge weights.};

        \begin{scope}[yshift=-2.5cm]
            \draw[line width=0.05cm, theme] (1,0) -- (9,0);
            \foreach \n in {1,...,9}{
                \node[draw,circle,theme,line width=0.05cm, fill=white, scale=1.3] at (\n,0){};
                \node[theme,scale=0.9] at (\n,0){$\mathbf{\n}$};
            }
            \node[lighttheme,scale=0.7,anchor=south] at (1.5,0){$\mathbf{0.6}$};
            \node[lighttheme,scale=0.7,anchor=south] at (2.5,0){$\mathbf{0.9}$};
            \node[lighttheme,scale=0.7,anchor=south] at (3.5,0){$\mathbf{0.5}$};
            \node[lighttheme,scale=0.7,anchor=south] at (4.5,0){$\mathbf{0.1}$};
            \node[lighttheme,scale=0.7,anchor=south] at (5.5,0){$\mathbf{0.2}$};
            \node[lighttheme,scale=0.7,anchor=south] at (6.5,0){$\mathbf{0.1}$};
            \node[lighttheme,scale=0.7,anchor=south] at (7.5,0){$\mathbf{0.1}$};
            \node[lighttheme,scale=0.7,anchor=south] at (8.5,0){$\mathbf{0.7}$};
            
            \fill[opacity=0.7,white] (0.5,0.5) rectangle (9.5,-0.5);

            \draw[line width=0.05cm, theme] (4,0) -- (7,0);
            \foreach \n in {4,...,7}{
                \node[draw,circle,theme,line width=0.05cm, fill=white, scale=1.3] (\n) at (\n,0){};
                \node[theme,scale=0.9] at (\n,0){$\mathbf{\n}$};
            }
            \node[lighttheme,scale=0.7,anchor=south] at (4.5,0){$\mathbf{0.1}$};
            \node[lighttheme,scale=0.7,anchor=south] at (5.5,0){$\mathbf{0.2}$};
            \node[lighttheme,scale=0.7,anchor=south] at (6.5,0){$\mathbf{0.1}$};

            \node[block,anchor=east,draw,fill=lightblock!30!white,inner sep=0.15cm,rounded corners](I) at (3.5,-0.8){$\rI=4$};
            \draw[block,line width=0.03cm,->] (I.east) to[out=0,in=-90] (4.south);

            \node[anchor=east,scale=0.8,text width=2.5cm,align=flush right] at (12,0){$\rI$ is the first index followed by $\rL=3$ edges of weight less than $\rW=0.2$.};
        \end{scope}

        \begin{scope}[yshift=-5cm]
            \draw[block,fill=lightblock!30!white,rounded corners] (0,0.5) -- (9.5,0.5) -- (9.5,-5.5) -- (0,-5.5) -- (0,-3) -- (-1,-3) -- (-1,-2) -- (0,-2) -- cycle;
            \node[anchor=east,scale=1.2,block] at (0.2,-2.5){$\rU=$};
            \begin{scope}[yshift=-0cm]
                \draw[line width=0.05cm, theme!30!white] (1,0) -- (9,0);
                \foreach \n in {1,...,9}{
                    \node[draw,circle,theme!30!white,line width=0.05cm, fill=white, scale=1.3] at (\n,0){};
                    \node[theme!30!white,scale=0.9] at (\n,0){$\mathbf{\n}$};
                }
                \node[lighttheme!30!white,scale=0.7,anchor=south] at (1.5,0){$\mathbf{0.6}$};
                \node[lighttheme!30!white,scale=0.7,anchor=south] at (2.5,0){$\mathbf{0.9}$};
                \node[lighttheme!30!white,scale=0.7,anchor=south] at (3.5,0){$\mathbf{0.5}$};
                \node[lighttheme!30!white,scale=0.7,anchor=south] at (4.5,0){$\mathbf{0.1}$};
                \node[lighttheme!30!white,scale=0.7,anchor=south] at (5.5,0){$\mathbf{0.2}$};
                \node[lighttheme!30!white,scale=0.7,anchor=south] at (6.5,0){$\mathbf{0.1}$};
                \node[lighttheme!30!white,scale=0.7,anchor=south] at (7.5,0){$\mathbf{0.1}$};
                \node[lighttheme!30!white,scale=0.7,anchor=south] at (8.5,0){$\mathbf{0.7}$};
                
                % \fill[opacity=0.7,white] (0.5,0.5) rectangle (9.5,-0.5);
                
                \draw[line width=0.05cm, block] (4,0) -- (7,0);
                \foreach \n in {4,...,7}{
                    \node[draw,circle,block,line width=0.05cm, fill=lightblock, scale=1.3] at (\n,0){};
                    \node[block,scale=0.9] at (\n,0){$\mathbf{\n}$};
                }
                \node[block,anchor=north] at (3.5,0){$U_0$};
            \end{scope}
            \begin{scope}[yshift=-1cm]
                \draw[line width=0.05cm, theme!30!white] (1,0) -- (9,0);
                \foreach \n in {1,...,9}{
                    \node[draw,circle,theme!30!white,line width=0.05cm, fill=white, scale=1.3] at (\n,0){};
                    \node[theme!30!white,scale=0.9] at (\n,0){$\mathbf{\n}$};
                }
                \node[lighttheme!30!white,scale=0.7,anchor=south] at (1.5,0){$\mathbf{0.6}$};
                \node[lighttheme!30!white,scale=0.7,anchor=south] at (2.5,0){$\mathbf{0.9}$};
                \node[lighttheme!30!white,scale=0.7,anchor=south] at (3.5,0){$\mathbf{0.5}$};
                \node[lighttheme!30!white,scale=0.7,anchor=south] at (4.5,0){$\mathbf{0.1}$};
                \node[lighttheme!30!white,scale=0.7,anchor=south] at (5.5,0){$\mathbf{0.2}$};
                \node[lighttheme!30!white,scale=0.7,anchor=south] at (6.5,0){$\mathbf{0.1}$};
                \node[lighttheme!30!white,scale=0.7,anchor=south] at (7.5,0){$\mathbf{0.1}$};
                \node[lighttheme!30!white,scale=0.7,anchor=south] at (8.5,0){$\mathbf{0.7}$};
                
                % \fill[opacity=0.7,white] (0.5,0.5) rectangle (9.5,-0.5);
                
                \draw[line width=0.05cm, block] (4,0) -- (8,0);
                \foreach \n in {4,...,8}{
                    \node[draw,circle,block,line width=0.05cm, fill=lightblock, scale=1.3] at (\n,0){};
                    \node[block,scale=0.9] at (\n,0){$\mathbf{\n}$};
                }
                \node[block,anchor=north] at (3.5,0){$U_1$};
            \end{scope}
            \begin{scope}[yshift=-2cm]
                \draw[line width=0.05cm, theme!30!white] (1,0) -- (9,0);
                \foreach \n in {1,...,9}{
                    \node[draw,circle,theme!30!white,line width=0.05cm, fill=white, scale=1.3] at (\n,0){};
                    \node[theme!30!white,scale=0.9] at (\n,0){$\mathbf{\n}$};
                }
                \node[lighttheme!30!white,scale=0.7,anchor=south] at (1.5,0){$\mathbf{0.6}$};
                \node[lighttheme!30!white,scale=0.7,anchor=south] at (2.5,0){$\mathbf{0.9}$};
                \node[lighttheme!30!white,scale=0.7,anchor=south] at (3.5,0){$\mathbf{0.5}$};
                \node[lighttheme!30!white,scale=0.7,anchor=south] at (4.5,0){$\mathbf{0.1}$};
                \node[lighttheme!30!white,scale=0.7,anchor=south] at (5.5,0){$\mathbf{0.2}$};
                \node[lighttheme!30!white,scale=0.7,anchor=south] at (6.5,0){$\mathbf{0.1}$};
                \node[lighttheme!30!white,scale=0.7,anchor=south] at (7.5,0){$\mathbf{0.1}$};
                \node[lighttheme!30!white,scale=0.7,anchor=south] at (8.5,0){$\mathbf{0.7}$};
                
                % \fill[opacity=0.7,white] (0.5,0.5) rectangle (9.5,-0.5);
                
                \draw[line width=0.05cm, block] (4,0) -- (9,0);
                \foreach \n in {4,...,9}{
                    \node[draw,circle,block,line width=0.05cm, fill=lightblock, scale=1.3] at (\n,0){};
                    \node[block,scale=0.9] at (\n,0){$\mathbf{\n}$};
                }
                \node[block,anchor=north] at (3.5,0){$U_2$};
            \end{scope}
            \begin{scope}[yshift=-3cm]
                \draw[line width=0.05cm, theme!30!white] (1,0) -- (9,0);
                \foreach \n in {1,...,9}{
                    \node[draw,circle,theme!30!white,line width=0.05cm, fill=white, scale=1.3] at (\n,0){};
                    \node[theme!30!white,scale=0.9] at (\n,0){$\mathbf{\n}$};
                }
                \node[lighttheme!30!white,scale=0.7,anchor=south] at (1.5,0){$\mathbf{0.6}$};
                \node[lighttheme!30!white,scale=0.7,anchor=south] at (2.5,0){$\mathbf{0.9}$};
                \node[lighttheme!30!white,scale=0.7,anchor=south] at (3.5,0){$\mathbf{0.5}$};
                \node[lighttheme!30!white,scale=0.7,anchor=south] at (4.5,0){$\mathbf{0.1}$};
                \node[lighttheme!30!white,scale=0.7,anchor=south] at (5.5,0){$\mathbf{0.2}$};
                \node[lighttheme!30!white,scale=0.7,anchor=south] at (6.5,0){$\mathbf{0.1}$};
                \node[lighttheme!30!white,scale=0.7,anchor=south] at (7.5,0){$\mathbf{0.1}$};
                \node[lighttheme!30!white,scale=0.7,anchor=south] at (8.5,0){$\mathbf{0.7}$};
                
                % \fill[opacity=0.7,white] (0.5,0.5) rectangle (9.5,-0.5);
                
                \draw[line width=0.05cm, block] (3,0) -- (9,0);
                \foreach \n in {3,...,9}{
                    \node[draw,circle,block,line width=0.05cm, fill=lightblock, scale=1.3] at (\n,0){};
                    \node[block,scale=0.9] at (\n,0){$\mathbf{\n}$};
                }
                \node[block,anchor=north] at (2.5,0){$U_3$};
            \end{scope}
            \begin{scope}[yshift=-4cm]
                \draw[line width=0.05cm, theme!30!white] (1,0) -- (9,0);
                \foreach \n in {1,...,9}{
                    \node[draw,circle,theme!30!white,line width=0.05cm, fill=white, scale=1.3] at (\n,0){};
                    \node[theme!30!white,scale=0.9] at (\n,0){$\mathbf{\n}$};
                }
                \node[lighttheme!30!white,scale=0.7,anchor=south] at (1.5,0){$\mathbf{0.6}$};
                \node[lighttheme!30!white,scale=0.7,anchor=south] at (2.5,0){$\mathbf{0.9}$};
                \node[lighttheme!30!white,scale=0.7,anchor=south] at (3.5,0){$\mathbf{0.5}$};
                \node[lighttheme!30!white,scale=0.7,anchor=south] at (4.5,0){$\mathbf{0.1}$};
                \node[lighttheme!30!white,scale=0.7,anchor=south] at (5.5,0){$\mathbf{0.2}$};
                \node[lighttheme!30!white,scale=0.7,anchor=south] at (6.5,0){$\mathbf{0.1}$};
                \node[lighttheme!30!white,scale=0.7,anchor=south] at (7.5,0){$\mathbf{0.1}$};
                \node[lighttheme!30!white,scale=0.7,anchor=south] at (8.5,0){$\mathbf{0.7}$};
                
                % \fill[opacity=0.7,white] (0.5,0.5) rectangle (9.5,-0.5);
                
                \draw[line width=0.05cm, block] (2,0) -- (9,0);
                \foreach \n in {2,...,9}{
                    \node[draw,circle,block,line width=0.05cm, fill=lightblock, scale=1.3] at (\n,0){};
                    \node[block,scale=0.9] at (\n,0){$\mathbf{\n}$};
                }
                \node[block,anchor=north] at (1.5,0){$U_4$};
            \end{scope}
            \begin{scope}[yshift=-5cm]
                \draw[line width=0.05cm, theme!30!white] (1,0) -- (9,0);
                \foreach \n in {1,...,9}{
                    \node[draw,circle,theme!30!white,line width=0.05cm, fill=white, scale=1.3] at (\n,0){};
                    \node[theme!30!white,scale=0.9] at (\n,0){$\mathbf{\n}$};
                }
                \node[lighttheme!30!white,scale=0.7,anchor=south] at (1.5,0){$\mathbf{0.6}$};
                \node[lighttheme!30!white,scale=0.7,anchor=south] at (2.5,0){$\mathbf{0.9}$};
                \node[lighttheme!30!white,scale=0.7,anchor=south] at (3.5,0){$\mathbf{0.5}$};
                \node[lighttheme!30!white,scale=0.7,anchor=south] at (4.5,0){$\mathbf{0.1}$};
                \node[lighttheme!30!white,scale=0.7,anchor=south] at (5.5,0){$\mathbf{0.2}$};
                \node[lighttheme!30!white,scale=0.7,anchor=south] at (6.5,0){$\mathbf{0.1}$};
                \node[lighttheme!30!white,scale=0.7,anchor=south] at (7.5,0){$\mathbf{0.1}$};
                \node[lighttheme!30!white,scale=0.7,anchor=south] at (8.5,0){$\mathbf{0.7}$};
                
                % \fill[opacity=0.7,white] (0.5,0.5) rectangle (9.5,-0.5);
                
                \draw[line width=0.05cm, block] (1,0) -- (9,0);
                \foreach \n in {1,...,9}{
                    \node[draw,circle,block,line width=0.05cm, fill=lightblock, scale=1.3] at (\n,0){};
                    \node[block,scale=0.9] at (\n,0){$\mathbf{\n}$};
                }
                \node[block,anchor=north] at (0.5,0){$U_5$};
            \end{scope}

            \node[anchor=north east,scale=0.8,text width=2.7cm,align=flush right] at (12,0.5){The sets in $\rU$ are built using $\rL=3$ and $\rI=4$ to set $U_0=\{4,5,6,7\}$ before expanding on both sides (right then left).};
        \end{scope}

        \node[scale=2.5] (top) at (1,0){};
        \node[scale=2.5] (middle) at (1,-2.5){};
        \node[scale=2.5] (bottom) at (1,-5){};
        \draw[->,line width=0.05cm,alert] (top) to[out=-135,in=135] (middle);
        \draw[->,line width=0.05cm,alert] (middle) to[out=-135,in=135] (bottom);
    \end{tikzpicture}
    \caption{An example of $\rI$ and $\rU$ for an instance of the weighted ordered line $(H_n,w)$, with $\rW=0.2$ and $\rL=3$. First, $\rI$ is set to be the first sequence of $\rL=3$ consecutive edges with weights less than $\rW=0.2$. In this example, $\rI=4$. Then, given $\rI$, set $U_0=V(\rI,\rI+\rL)=\{4,5,6,7\}$ and expand first to the right and then to left to obtain $U_1,\ldots,U_{n-\rL-1}$. In other words, in order to obtain $U_1$, $U_2$, $U_3$, $U_4$, and $U_5$, we sequentially add $8$, $9$, $3$, $2$, and $1$ to $U_0$.}
    \label{fig:I and U}
\end{figure}

We now use the sequence $\rU$ to bound the cost of $(\rK_n,H_n)$ when $H_n$ is a star or a path. The following lemma gives a first bound on the cost using $\rU$.

\begin{lemma}\label{lem:cost to U}
    Let $H_n$ be the star $\Star_n$ or path $\Path_n$. Then, conditionally given that $\rI(\rW,\rL)<n-\rL$, we have
    \begin{align*}
        \cost(\rK_n,H_n)\leq\max\bigg\{\rW\rL,1+\max\Big\{\wdiam\big(\mstr(\rK_n[U_{i}])\big):0\leq i<n-\rL\Big\}\bigg\}\,.
    \end{align*}
\end{lemma}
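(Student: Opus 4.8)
The plan is to use the sequence $\rU$ as the backbone of an application of Corollary~\ref{cor:eating algorithm}, after first transforming $H_n[U_0]$ into $\mstr(\rK_n[U_0])$ using a single cheap step. First I would observe that, since we condition on $\rI=\rI(\rW,\rL)<n-\rL$, by the definition of $\rI$ in \eqref{eq:def I} the edges $e_\rI,\ldots,e_{\rI+\rL-1}$ all have weight at most $\rW$; these are exactly the edges of $H_n[U_0]=H_n[V(\rI,\rI+\rL)]$, which is a path (a subpath of $\Path_n$, or a ``broom'' subpath through the center of $\Star_n$ — in fact in both cases $H_n[U_0]$ is a path on $\rL+1$ vertices, using $|V(i,j)|=j-i+1$). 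So $\mathop{w}(H_n[U_0])\le \rW\rL$, and applying $\Phi_{\rK_n}(\cdot,U_0)$ as a single optimizing step of weight at most $\rW\rL$ replaces $H_n[U_0]$ by $\mstr(\rK_n[U_0])$.

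Next I would check that the resulting graph $H_n':=\Phi_{\rK_n}(H_n,U_0)$ together with the chain $U_0\subset U_1\subset\cdots\subset U_{n-\rL-1}=[n]$ satisfies the hypotheses of Corollary~\ref{cor:eating algorithm} with $\rG=\rK_n$: we have $\max\{X_e\}\le 1$ trivially; $H_n'[U_0]=\mstr(\rK_n[U_0])$ by construction; each consecutive difference $U_{i+1}\setminus U_i$ is the singleton consisting of the next vertex added (either $\rI+\rL,\rI+\rL+1,\ldots,n$ on the right sweep, then $\rI-1,\rI-2,\ldots,1$ on the left sweep, per \eqref{eq:definition of U}); and $H_n'[U_i]$ is connected for each $i$ — this is exactly the point of building $\rU$ from contiguous blocks of edges of $H_n$, since $U_i=V(a_i,b_i)$ for an appropriate interval and $H_n'$ agrees with $H_n$ off $U_0$ while $H_n'[U_0]$ is a tree spanning $U_0$, so $H_n'[U_i]$ remains connected. (One should note $H_n'[U_i]$ need not equal $H_n[U_i]$, but connectivity is all the corollary requires.) Corollary~\ref{cor:eating algorithm} then gives
\[
\cost(\rK_n,H_n')\le 1+\max\Big\{\wdiam\big(\mstr(\rK_n[U_i])\big):0\le i<n-\rL\Big\}.
\]

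To finish, I would concatenate: the single step of weight $\le \rW\rL$ transforming $H_n$ into $H_n'$, followed by a minimum-cost \mstr\ sequence for $(\rK_n,H_n')$; this concatenation is an \mstr\ sequence for $(\rK_n,H_n)$ whose weight is the maximum of $\rW\rL$ and $\cost(\rK_n,H_n')$ (using the standard fact, invoked repeatedly in the paper, that an optimizing sequence for $(\rK_n[U_0],H_n[U_0])$ doubles as one for $(\rK_n,H_n)$ of the same weight, and that the weight of a concatenation is the max of the two weights). Combining with the displayed bound on $\cost(\rK_n,H_n')$ yields precisely the claimed inequality. The only genuinely delicate point — and the step I would be most careful about — is verifying the connectivity of $H_n'[U_i]$ for every $i$, i.e.\ that replacing the block $H_n[U_0]$ by its \mstr\ does not disconnect any later $U_i$; this is where it matters that the $U_i$ are nested intervals of the line/star and that the \mstr\ of a connected induced subgraph is itself a spanning (hence connected) tree on that vertex set, so $H_n'[U_i]\supseteq (\text{a spanning tree of }U_0)\cup(\text{the }H_n\text{-edges among }U_i\setminus U_0\text{ and between }U_0\text{ and }U_i)$ remains connected.
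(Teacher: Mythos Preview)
Your proof is correct and follows essentially the same approach as the paper: one cheap step $(U_0)$ to convert $H_n[U_0]$ into $\mstr(\rK_n[U_0])$, then Corollary~\ref{cor:eating algorithm} applied to $H_n'$ along the nested sequence $\rU$, then concatenation. One small descriptive slip: in the star case $H_n[U_0]$ is itself a star (center $n$, leaves $\rI,\ldots,\rI+\rL-1$), not a path; this is immaterial since all you use is that it is connected with $\rL$ edges each of weight at most $\rW$.
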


\begin{proof}
    This result almost directly follows from Corollary~\ref{cor:eating algorithm}. Indeed, let $H'_n=\Phi(H_n,U_{0})$. Then the sets $U_{0},\ldots,U_{n-\rL-1}$ satisfy the condition of Corollary~\ref{cor:eating algorithm} with $H = H'_n$, implying that
    \begin{align*}
        \cost(\rK_n,H'_n)\leq1+\max\Big\{\wdiam\big(\mstr(\rK_n[U_{i}])\big):0\leq i<n-\rL\Big\}\,.
    \end{align*}
    But now, by concatenating any minimal weight \mstr\ sequence for $(\rK_n[U_0],H_n[U_0])$ and any minimal weight \mstr\ sequence for $(\rK_n,H'_n)$, it follows that
    \begin{align*}
        \cost(\rK_n,H_n)\leq\max\Big\{\cost\big(\rK_n[U_0],H_n[U_0]\big),\cost\big(\rK_n,H'_n\big)\Big\}\,.
    \end{align*}
    In order to complete the proof of the lemma, note that, conditionally given $\rI<n-\rL$,
    \begin{align*}
        \mathop{w}(H_n[U_0])=\sum_{e\in\rE(H_n[U_0])}X_e\leq\rW\rL\,.
    \end{align*}
    Taking $\rS=(U_0)$, this yields
    \begin{align*}
        \cost\big(\rK_n[U_0],H_n[U_0]\big)\leq\wt\big(\rK_n[U_0],H_n[U_0],\rS\big)=w\big(H_n[U_0]\big)\leq\rW\rL\,.
    \end{align*}
    This proves the desired upper bound and concludes the proof of the lemma.
\end{proof}

The next two results, combined with Lemma~\ref{lem:cost to U}, will allow us to give the full proof of Proposition~\ref{prop:MAIN} when $H_n$ is either a star or a path.

\begin{prop}\label{prop:good sets star path}
    For any $\epsilon>0$, for $\rW=\frac{1}{\log n}$ and $\rL=\lfloor\log\log n\rfloor$, as $n \rightarrow \infty$ we have
    \begin{align*}
        \mathbb{P}\Big(\exists U\in\rU(\rI(\rW,\rL)):\wdiam\big(\mstr(\rK_n[U])\big)>\epsilon \Big)\longrightarrow0\,.
    \end{align*}
\end{prop}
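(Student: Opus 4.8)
The plan is to isolate the first set $U_0$ of the sequence $\rU$ — whose control is the entire purpose of the parameters $\rW,\rL$ — and to reduce the remaining sets to the weighted‑diameter estimate of Theorem~\ref{thm:bound on diam(mst)}. Throughout I condition on the $\sigma$‑algebra $\mathcal F$ generated by the weights $(X_{e_k})_k$ of the edges of $H_n$; under this conditioning $\rI$, and hence the whole sequence $\rU$, become deterministic, while all remaining edge weights of $\rK_n$ stay i.i.d.\ $\mathrm{Uniform}[0,1]$.

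\emph{The degenerate case.} The event $\{\rI = n-\rL\}$ is contained in the event that none of the $\Theta(n/\rL)$ pairwise‑disjoint windows $\{e_{1+t\rL},\dots,e_{(t+1)\rL}\}$ has all its weights $\le\rW$, so $\mathbb{P}(\rI=n-\rL)\le(1-\rW^\rL)^{\Theta(n/\rL)}\le\exp(-\Theta(\rW^\rL n/\rL))$. Since $\rW^\rL=(\log n)^{-\lfloor\log\log n\rfloor}=\exp(-(1+o(1))(\log\log n)^2)$ and $(\log\log n)^2=o(\log n)$, we have $\rW^\rL n/\rL\to\infty$, so $\mathbb{P}(\rI=n-\rL)\to0$; the same computation gives $\mathbb{P}(\rI>A_n)\to0$ with $A_n:=\exp(2(\log\log n)^2)=n^{o(1)}$. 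Hence it suffices to bound the bad probability on $\{\rI<n-\rL\}\cap\{\rI\le A_n\}$, and on this event every set arising in the ``left'' half of \eqref{eq:definition of U} has at least $n-A_n$ vertices.

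\emph{The set $U_0$.} On $\{\rI<n-\rL\}$ the $\rL$ edges $e_\rI,\dots,e_{\rI+\rL-1}$ each have weight $\le\rW$ and form a spanning tree of $\rK_n[U_0]$ (a sub‑path of $\Path_n$ or a sub‑star of $\Star_n$ on the $\rL+1$ vertices of $U_0$). Since the weighted diameter of a tree is at most its total weight and the \mstr\ minimises total weight,
\begin{align*}
\wdiam\big(\mstr(\rK_n[U_0])\big)\ \le\ \mathop{w}\big(\mstr(\rK_n[U_0])\big)\ \le\ \mathop{w}\big(H_n[U_0]\big)\ \le\ \rW\rL\ =\ \frac{\lfloor\log\log n\rfloor}{\log n}\ \longrightarrow\ 0\,,
\end{align*}
so $U_0$ never spoils the bound once $n$ is large.

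\emph{The sets $U_j$, $j\ge1$, and the main obstacle.} Each such $U_j$ has $|U_j|=\rL+1+j\ge\lfloor\log\log n\rfloor+2$, which tends to $\infty$ uniformly in $j$, and $j\mapsto|U_j|$ is injective, so the sizes occurring are exactly $\rL+2,\dots,n$, each once. Were the conditional law of $\rK_n[U_j]$ exactly that of a uniformly weighted complete graph, Theorem~\ref{thm:bound on diam(mst)} and a union bound would conclude, using $7\log^4 m/m^{1/10}<\eps$ for $m$ large and $\sum_m m^{-\log m}<\infty$ (as $m^{\log m}\ge m^2$ eventually):
\begin{align*}
\mathbb{P}\Big(\exists\, j\ge1:\wdiam\big(\mstr(\rK_n[U_j])\big)>\eps\Big)\ \le\ \sum_{m=\rL+2}^{n}\frac{4}{m^{\log m}}\ \le\ \sum_{m\ge\rL+2}\frac{4}{m^{\log m}}\ \longrightarrow\ 0\,,
\end{align*}
the tail vanishing because $\rL\to\infty$. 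The real difficulty — which I expect to be the technical heart of the proof — is that $\rK_n[U_j]$ is only \emph{almost} uniform given $\mathcal F$: inside it the $\rL$ edges $e_\rI,\dots,e_{\rI+\rL-1}$ are forced to lie in $[0,\rW]$, and (for $U_j$ from the left half of \eqref{eq:definition of U}) at most $\rI\le A_n$ further edges of $H_n$ are conditioned on the increasing event ``no earlier window is all‑$\le\rW$'', hence lie stochastically above independent uniforms; all these ``special'' edges are incident to at most $\rL+A_n+O(1)=n^{o(1)}$ vertices. One must show the resulting perturbation shifts $\wdiam(\mstr(\cdot))$ by only $o(1)$, and since this functional is \emph{not} monotone in the edge weights a genuine stability argument is needed. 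I would split by size: for $U_j$ of size at most $(\log n)^{1/4}$, say, whp no non‑special edge of $\rK_n[U_j]$ of weight $\le\rW$ offers a connection between two vertices of $U_0$ cheaper than the forced path, so $\mstr(\rK_n[U_j])$ contains that path, and contracting it reduces $\rK_n[U_j]$ to a uniform complete graph on $|U_j|-\rL$ vertices in which a single vertex carries edges that are minima of $\rL+1$ uniforms — hence only smaller, so that vertex enters essentially as a leaf — to which the display above applies, while if $j$ is bounded the contracted \mstr\ already has weight $o(1)$; for the remaining $U_j$ one instead re‑randomises the $n^{o(1)}$ special edges to i.i.d.\ uniforms, obtaining a genuinely uniform $\rK_n'[U_j]$, and controls $|\wdiam(\mstr(\rK_n[U_j]))-\wdiam(\mstr(\rK_n'[U_j]))|$ directly, using that the special edges touch only a vanishing set of vertices, each with $|U_j|-n^{o(1)}$ clean uniform edges to the rest. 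Making this stability estimate quantitative enough to survive the union bound over $j$ is where the bulk of the effort lies.
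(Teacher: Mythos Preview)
Your outline is on the right track---handling $U_0$ via $\rW\rL\to0$, reducing to Theorem~\ref{thm:bound on diam(mst)}, and isolating the conditioning on $\rI$ as the only obstruction---and you correctly name the obstacle: $\wdiam(\mstr(\cdot))$ is not monotone in the weights, so coupling alone does not finish. But the proof as written has a genuine gap, which you yourself flag: the ``stability estimate'' that would let you pass from the conditioned weights to genuine uniforms is never actually proved, and your two suggested mechanisms (contraction for small sets, re-randomisation for large ones) are both problematic. Contracting $T=H_n[U_0]$ does \emph{not} yield a uniformly weighted complete graph---the super-vertex has edges distributed as minima of $\rL+1$ uniforms, and the weighted diameter of the resulting \mstr\ is not obviously controlled by Theorem~\ref{thm:bound on diam(mst)}. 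For the re-randomisation, ``controls the difference directly'' hides everything: changing $n^{o(1)}$ edge weights can rewire the \mstr\ globally, and you give no mechanism for bounding the resulting change in $\wdiam$. Also, your claim that the left-half edges ``lie stochastically above independent uniforms'' is not quite right jointly; the conditioning on $\{\rI=k\}$ couples the edges $e_1,\dots,e_{k-1}$ in a nontrivial way.

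What the paper does instead is prove a clean deterministic comparison lemma (Proposition~\ref{prop:reduced MST}): if $\rG^*$ is obtained from $\rG$ by lowering weights only on a subtree $T$, then
\[
\wdiam\big(\mstr(\rG^*)\big)\le w^*(T)+|\rV(T)|\cdot\wdiam\big(\mstr(\rG)\big),
\]
and the factor $|\rV(T)|$ improves to $2$ when $T\subset\mstr(\rG^*)$. This is exactly the missing ``stability'' tool, and its proof is a short path-decomposition argument. With it in hand the paper splits $\rU$ into three pieces rather than two: a short right part $\rU_r^-$ of the first $\rL^{20}$ sets (where one first shows $T\subset\mstr$ with high probability, then applies the sharper bound with factor $2$), a long right part $\rU_r^+$ (where $|U|\ge\rL^{20}$ is large enough that the crude factor $|\rV(T)|=\rL+1$ is absorbed by the $|U|^{-1/10}$ in Theorem~\ref{thm:bound on diam(mst)}), and the left part $\rU_\ell$ (where a separate argument shows the conditioned edges $e_1,\dots,e_m$ are, with high probability, simply absent from every relevant \mstr, so the coupling becomes an equality). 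Your size threshold $(\log n)^{1/4}$ is too small to make the crude $|\rV(T)|$ factor harmless, and your treatment of the left edges does not address their joint law; both issues are what force the paper's more careful three-way split.
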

\begin{lemma}\label{lem:bound I}
    Let $\rW=\frac{1}{\log n}$ and $\rL=\lfloor\log\log n\rfloor$. Then, for any $a>0$, as $n\rightarrow\infty$ we have
    \begin{align*}
        \mathbb{P}\big(\rI(\rW,\rL)\geq n^a\big)\longrightarrow0\,.
    \end{align*}
\end{lemma}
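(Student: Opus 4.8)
The plan is to first dispose of a trivial parameter range and then run a direct disjoint-blocks argument. If $a \ge 1$, then for all large $n$ we have $n^a \ge n > n-\rL \ge \rI(\rW,\rL)$ directly from the definition \eqref{eq:def I}, so $\mathbb{P}(\rI \ge n^a) = 0$ and there is nothing to prove. Hence I would fix $a \in (0,1)$ and take $n$ large enough that $\rL = \lfloor\log\log n\rfloor \ge 2$ and $\lfloor n^a\rfloor < n - \rL$, so that indices up to $n^a$ are genuinely "available" for the minimum defining $\rI$.

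The key point is that the edges $e_1,\dots,e_{n-1}$ of $\Star_n$ (resp.\ $\Path_n$) are \emph{distinct} edges of $K_n$, so the weights $X_{e_1},\dots,X_{e_{n-1}}$ are i.i.d.\ $\mathrm{Uniform}[0,1]$; call an edge \emph{light} if its weight is at most $\rW$, so each $e_j$ is light with probability $\rW$, independently of the others. I would partition $\{1,\dots,\lfloor n^a\rfloor\}$ into $N := \lfloor \lfloor n^a\rfloor/\rL\rfloor$ consecutive blocks of $\rL$ indices, $B_t = \{(t-1)\rL+1,\dots,t\rL\}$ for $1 \le t \le N$. If some $B_t$ consists entirely of light edges, then $i=(t-1)\rL+1$ is an admissible value in the minimum defining $\rI$, so $\rI \le (N-1)\rL+1 < n^a$. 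Consequently $\{\rI \ge n^a\}$ is contained in the event that no $B_t$ is all-light; since distinct blocks involve disjoint edge sets, these events are independent, each of probability $1-\rW^\rL$, and therefore
\[
\mathbb{P}\big(\rI(\rW,\rL) \ge n^a\big) \le (1-\rW^\rL)^N \le \exp\!\big(-\rW^\rL N\big).
\]

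It then remains only to check that $\rW^\rL N \to \infty$. With $\rW = 1/\log n$ and $\rL = \lfloor\log\log n\rfloor$ one has $\rW^\rL = (\log n)^{-\lfloor\log\log n\rfloor} \ge \exp(-(\log\log n)^2)$, while $N \ge n^a/(2\log\log n)$ for $n$ large, so $\rW^\rL N \ge \exp\big(a\log n - (\log\log n)^2 - \log(2\log\log n)\big) \to \infty$ because $a\log n$ dominates $(\log\log n)^2$. This yields the claim. There is no serious obstacle in this argument: the only points requiring a moment's care are the trivial case $a \ge 1$, the fact that the $e_j$ are genuinely distinct (so that blockwise independence is legitimate), and the final comparison, which is just the observation that although a single all-light block of length $\rL$ is superpolynomially unlikely (probability $e^{-(1+o(1))(\log\log n)^2}$), we get polynomially many — of order $n^a/\log\log n$ — independent chances at one, which is far more than enough.
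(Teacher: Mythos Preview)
Your proof is correct and follows essentially the same disjoint-blocks argument as the paper: partition an initial segment into $\lfloor n^a/\rL\rfloor$ blocks of $\rL$ consecutive edges, use independence to bound the probability that no block is all-light by $(1-\rW^\rL)^N \le e^{-\rW^\rL N}$, and check that $\rW^\rL N\to\infty$. The only cosmetic differences are that you handle the case $a\ge 1$ separately and spell out the final asymptotic more carefully, whereas the paper treats all $a>0$ at once via the same bound.
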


Lemma~\ref{lem:bound I} is straightforward and we prove it immediately. On the other hand, Proposition~\ref{prop:good sets star path} is quite technical and we dedicate Section~\ref{sec:good sets star path} below to proving it.

\begin{proof}[Proof of Lemma~\ref{lem:bound I}]
    For any integer $k\geq1$, by the definition of $\rI$,
    \begin{align*}
        \mathbb{P}\big(\rI\geq k\rL+1\big)&=\mathbb{P}\Big(\forall i<k\rL+1,\exists j\in\{i,\ldots,i+\rL-1\}:X_{e_j}>\rW\Big)\\
        &\leq\mathbb{P}\Big(\forall i\in\big\{1,1+\rL,\ldots,1+(k-1)\rL\big\},\exists j\in\{i,\ldots,i+\rL-1\}:X_{e_j}>\rW)\Big)\,.
    \end{align*}
    But then, by independence of the weights of $\rX$, we have
    \begin{align*}
        \mathbb{P}\big(\rI\geq k\rL+1\big)&=\prod_{i=0}^{k-1}\mathbb{P}\Big(\exists j\in\{1+i\rL,\ldots,1+(i+1)\rL-1\}:X_{e_j}>\rW\Big)=\prod_{i=0}^{k-1}\Big(1-\rW^\rL\Big)\leq e^{-k\rW^\rL}\,,
    \end{align*}
    where the last inequality follows from the convexity of the exponential.
    Applying this result with $k=\lfloor\frac{n^a-1}{\rL}\rfloor$, we obtain
    \begin{align*}
        \mathbb{P}\big(\rI\geq n^a\big)\leq\mathbb{P}\big(\rI\geq k\rL+1\big)\leq \exp\left(- \left\lfloor\frac{n^a-1}{\rL}\right\rfloor \cdot \rW^{\rL}\right) \,,
    \end{align*}
    and the final expression tends to 0 as $n \rightarrow \infty$.
\end{proof}

\begin{proof}[Proof of Proposition~\ref{prop:MAIN} (Case of the star and the path)]
     Let $\rW=\frac{1}{\log n}$ and  $\rL=\lfloor\log\log n\rfloor$. Fixing $\eps > 0$, we have
    \begin{align*}
        \mathbb{P}\Big(\cost(\rK_n,H_n)>1+\epsilon\Big)\leq\mathbb{P}\Big(\cost(\rK_n,H_n)>1+\epsilon~\Big|~\rI<n-\rL\Big)+\mathbb{P}\Big(\rI=n-\rL\Big)\,.
    \end{align*}
    Applying Lemma~\ref{lem:bound I} with any $a<1$, for large enough $n$ we have
    \begin{align*}
        \mathbb{P}\Big(\rI=n-\rL\Big)\leq\mathbb{P}\Big(\rI\geq n^a\Big)\longrightarrow0\,.
    \end{align*}
    Hence, we have
    \begin{align*}
        \mathbb{P}\Big(\cost(\rK_n,H_n)>1+\epsilon\Big)=\mathbb{P}\Big(\cost(\rK_n,H_n)>1+\epsilon~\Big|~\rI<n-\rL\Big)+o(1)\,.
    \end{align*}
    Since $\rW\rL\to0$, combining the previous bound with Lemma~\ref{lem:cost to U} leads to
    \begin{align*}
        &\mathbb{P}\Big(\cost(\rK_n,H_n)>1+\epsilon\Big)\\
        &\hspace{0.5cm}\leq\mathbb{P}\left(\max\bigg\{\rW\rL,1+\max\Big\{\wdiam\big(\mstr(\rK_n[U_{i}])\big)\Big\}\bigg\}>1+\epsilon~\bigg|~\rI<n-\rL\right)+o(1)\\
        &\hspace{0.5cm}=\mathbb{P}\left(\max\Big\{\wdiam\big(\mstr(\rK_n[U])\big):U\in\rU\Big\}>\epsilon~\bigg|~\rI<n-\rL\right)+o(1)\,.
    \end{align*}
    The upper bound now follows from Proposition~\ref{prop:good sets star path}, once again since $\mathbb{P}(\rI<n-\rL)\rightarrow0$.
\end{proof}

\subsection{Proof of Proposition~\ref{prop:good sets star path}}\label{sec:good sets star path}

In this section, we prove Proposition~\ref{prop:good sets star path}, which concludes the proof of Proposition~\ref{prop:MAIN}. Before doing so, we state a proposition which is an important input to the proof.

\begin{prop}\label{prop:reduced MST}
    Let $\rG=(G,w)$ be a weighted graph. Let $T$ be a subtree (not necessarily spanning) of $\rG$ and let $\rG^*=(G,w^*)$ be a weighted graph such that $\mathop{w^*}(e)\leq \mathop{w}(e)$ for $e\in\rE(T)$ and $\mathop{w^*}(e)=\mathop{w}(e)$ otherwise. Then
    \begin{align*}
        \wdiam\big(\mstr(\rG^*)\big)\leq \mathop{w^*}(T)+|\rV(T)|\times\wdiam\big(\mstr(\rG)\big)\,.
    \end{align*}
    Moreover, if $T$ is a subtree of $\mstr(\rG^*)$, then
    \begin{align*}
        \wdiam\big(\mstr(\rG^*)\big)\leq \mathop{w^*}(T)+2\times\wdiam\big(\mstr(\rG)\big)\,.
    \end{align*}
\end{prop}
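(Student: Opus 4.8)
The plan is to prove both inequalities by relating $\mstr(\rG^*)$ to $\mstr(\rG)$ through the tree $T$, exploiting the fact that the two weightings differ only on edges of $T$ (and only by lowering weights there). The key structural observation is that because $w^*\le w$ pointwise and the two weightings agree off $\rE(T)$, any edge $e\notin\rE(T)$ that is ``heaviest in a cycle'' with respect to $w$ remains heaviest in that cycle with respect to $w^*$; hence $\rE(\mstr(\rG^*))\setminus\rE(T)\subseteq\rE(\mstr(\rG))$. In other words, the only edges $\mstr(\rG^*)$ can contain that are not in $\mstr(\rG)$ are edges of $T$, so $\mstr(\rG^*)\subseteq\mstr(\rG)\cup T$. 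This containment is the engine driving both bounds.

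For the first (general) inequality, I would take any two vertices $u,v\in\rV(G)$ and exhibit a short $u$--$v$ path in $\mstr(\rG^*)$. Using the containment above, consider the path from $u$ to $v$ in $\mstr(\rG^*)$; its edges are partitioned into maximal ``$T$-segments'' (sub-paths using only edges of $T$) and ``$\mstr(\rG)$-segments''. There can be at most $|\rV(T)|$ maximal $T$-segments along any path (since each visit to $T$ uses a distinct... more carefully: bound the number of $\mstr(\rG)$-segments by the number of times the path enters and leaves $T$, which is controlled by $|\rV(T)|$). Each $T$-segment has total $w^*$-weight at most $w^*(T)$ — actually, since the segments along a single path are edge-disjoint sub-paths of $T$, their total weight is at most $w^*(T)$ all together — and each $\mstr(\rG)$-segment has $w^*$-weight equal to its $w$-weight, which is at most $\wdiam(\mstr(\rG))$ since it is a sub-path of a tree (any sub-path of $\mstr(\rG)$ has weight at most the weighted diameter). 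Counting at most $|\rV(T)|$ such $\mstr(\rG)$-segments and adding the total $T$-contribution $w^*(T)$ gives $\dist_{\mstr(\rG^*)}(u,v)\le w^*(T)+|\rV(T)|\cdot\wdiam(\mstr(\rG))$, which is the claim after taking the max over $u,v$.

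For the refined inequality, assume additionally $T\subseteq\mstr(\rG^*)$. Now I would argue more cleverly about the path from $u$ to $v$ in $\mstr(\rG^*)$: since $\mstr(\rG^*)$ is a tree and $T$ is a subtree of it, the portion of the $u$--$v$ path that meets $T$ is a single contiguous sub-path of $T$ (a path in a tree enters a subtree, traverses a connected piece of it, and leaves — it cannot re-enter). Thus the $u$--$v$ path decomposes as (a sub-path avoiding $\rE(T)$ from $u$ to some $a\in\rV(T)$) $+$ (a sub-path inside $T$ from $a$ to $b$) $+$ (a sub-path avoiding $\rE(T)$ from $b$ to $v$), where possibly some pieces are trivial. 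The middle piece contributes at most $w^*(T)$. For the two outer pieces: each uses only edges of $\mstr(\rG^*)\setminus\rE(T)\subseteq\rE(\mstr(\rG))$, so each is a path in $\mstr(\rG)$ and hence has weight at most $\wdiam(\mstr(\rG))$. This gives the bound $w^*(T)+2\wdiam(\mstr(\rG))$.

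The main obstacle I anticipate is making the containment $\mstr(\rG^*)\subseteq\mstr(\rG)\cup T$ fully rigorous via the cycle characterization, and then, for the first inequality, carefully setting up the decomposition of a tree-path into $T$-segments and $\mstr(\rG)$-segments so that the counting ``at most $|\rV(T)|$ segments'' is justified and the edge-disjointness of the $T$-segments (giving total weight $\le w^*(T)$ rather than $\le |\rV(T)|\cdot w^*(T)$) is used correctly; a clean way is to contract each edge of $T$ inside $\mstr(\rG^*)$ and observe the resulting multigraph is a forest whose edges are all in $\mstr(\rG)$, so distances there are bounded by $\wdiam(\mstr(\rG))$ times the number of edges traversed, while the uncontracted $T$-edges add at most $w^*(T)$ in total. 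For the second inequality the argument is cleaner because the subtree-of-a-tree structure forces the single-contiguous-visit property, which is the crux.
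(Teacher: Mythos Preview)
Your proposal is correct and follows essentially the same approach as the paper: you establish the containment $\rE(\mstr(\rG^*))\subseteq\rE(\mstr(\rG))\cup\rE(T)$ via the cycle rule, decompose any path in $\mstr(\rG^*)$ into alternating $T$-segments and $\mstr(\rG)$-segments, bound the total $T$-contribution by $w^*(T)$ using edge-disjointness, and count at most $|\rV(T)|$ (respectively $2$, when $T\subseteq\mstr(\rG^*)$) $\mstr(\rG)$-segments, each of weight at most $\wdiam(\mstr(\rG))$. The paper makes the count explicit by writing the decomposition as $P_0,\dots,P_{2k}$ with odd-indexed pieces in $T$, so $k\le|\rE(T)|$ and hence $k+1\le|\rV(T)|$ even-indexed pieces; your ``number of entries/exits'' reasoning amounts to the same thing.
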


\begin{proof}
Let us try to understand the relation between $\mstr(\rG)$ and $\mstr(\rG^*)$. First note that
\begin{align}
    \rE(\mstr(\rG^*))\subset\rE\big(\mstr(\rG)\big)\cup\rE(T)\,. \label{eq:reduced weight mst relation}
\end{align}
Indeed, any edge $e\notin\rE(T)$ has the same weight with respect to $w$ and $w^*$. Then, for any $e\in\rE(\mstr(\rG^*))\setminus\rE(T)$, no cycle has $e$ as the heaviest edge with respect to $w^*$, which implies that no cycle has $e$ as the heaviest edge with respect to $w$, and thus $e\in\rE(\mstr(\rG))$.

Consider now a path $P$ contained in $\mstr(\rG^*)$. Using (\ref{eq:reduced weight mst relation}), we have
\begin{align*}
    \rE(P)\subseteq \rE\big(\mstr(\rG)\big)\cup\rE(T) \,,
\end{align*}
so we may uniquely decompose $P$ into pairwise edge-disjoint paths $P_0,\dots,P_{2k}$, where $k \geq 1$, and $P_i$ is a subpath of $T$ for $i$ odd and of $\mstr(\rG)$ for $i$ even (it is possible that either or both of $P_0,P_{2k}$ consists of a single vertex). Since $P_1,P_3,\dots,P_{2k-1}$ are disjoint subpaths of $T$, it follows that $k \leq |\rE(T)|$ and that $\sum_{i \text{ odd}} \mathop{w}(P_i) \leq \mathop{w}(T)$. Moreover, each of the paths $P_0,P_2,\dots,P_{2k}$ have weight at most $\wdiam\big(\mstr(\rG)\big)$, so
\begin{align}
    \sum_{i \text{ even}} \mathop{w}(P_i) &\leq (k+1) \times \wdiam\big(\mstr(\rG)\big) \label{eq:sum of path weights}\\
    &\leq \big( |\rE(T)|+1 \big) \times \wdiam\big(\mstr(\rG)\big) \notag\\
    &= |\rV(T)| \times \wdiam\big(\mstr(\rG)\big) \,.\notag 
\end{align}
The first bound of the proposition follows since 
\begin{align*}
    \mathop{w}(P) = \sum_{i \text{ even}} \mathop{w}(P_i) + \sum_{i \text{ odd}} \mathop{w}(P_i) \,.
\end{align*}
To establish the second bound, note that if $T$ is a subtree of $\mstr(\rG^*)$ then in the above decomposition of $P$ we must have $k=1$; a path in $\mstr(\rG^*)$ may enter $T$ and then leave it, after which it can never reenter $T$. In this case the first summation of (\ref{eq:sum of  path weights}) becomes
\begin{align*}
    \sum_{i \text{ even}} \mathop{w}(P_i) \leq 2 \times \wdiam\big(\mstr(\rG)\big) \,,
\end{align*}
so we obtain 
\begin{align*}
    \mathop{w}(P) = \sum_{i \text{ even}} \mathop{w}(P_i) + \sum_{i \text{ odd}} \mathop{w}(P_i) \leq \mathop{w}(T) + 2 \times \wdiam(\mstr(\rG)) \,,
\end{align*}
as required.
\end{proof}

For the remainder of this section we assume $\rW = \Wvalue$ and $\rL = \Lvalue$ and write $\rI=\rI(\rW,\rL)$. Consider the partition $\rU = \rU_r^- \cup \rU_r^+ \cup \rU_{\ell}$ where $\rU_r^-=\rU_r^-(\rI)=(U_{i},0\leq i\leq\min(\rL^{20},n-\rI-\rL))$, $\rU_r^+=\rU_r^+(\rI)=(U_{i},\min(\rL^{20},n-\rI-\rL)<i\leq n-\rI-\rL)$, and $\rU_\ell=\rU_\ell(\rI)=(U_{i},n-\rI-\rL<i\leq n-\rL-1)$. Then, in the case where $\rI<n-\rL-\rL^{20}$, $\rU_r^-$ corresponds to adding the first $\rL^{20}$ vertices on the right of $U_{0}$, $\rU_r^+$ corresponds to adding all remaining vertices on the right, and $\rU_\ell$ corresponds to adding the vertices on the left of $U_{0}$. We aim to prove tail bounds similar to that of Proposition~\ref{prop:good sets star path} for each of the sets $\rU_r^-$, $\rU_r^+$, and $\rU_\ell$, and we start with an important lemma regarding the distribution of $\rG$ conditioned on the value of $\rI$.

\begin{lemma}\label{lem:I coupling}
    Fix $k<n-\rL$ and let $\rK^*_n=(K_n,\rX^*)$ have the law of $\rK_n$ conditioned on the event that $\rI(\rW,\rL)=k$.
    Then for any $e \in \{e_i,k\leq i<k+\rL\}$, $X^*_e$ is a $\mathrm{Uniform}[0,\rW]$; for any $e\notin\{e_{i}:1\leq i<k+\rL\}$, $X^*_{e_i}$ is a random $\mathrm{Uniform}[0,1]$, and the edge weights $\big( X_e^*, e\in \rE(K_n) \setminus\{e_i,1\leq i<k\}\big)$ are mutually independent and independent of $(X^*_e,e\in\{e_i,1\leq i<k\})$. It follows that there exists a coupling between $\rK^*_n=(K_n,\rX^*)$ and $\rK'_n=(K_n,\rX')$  where $\rX'$ is a set of independent $\mathrm{Uniform}[0,1]$, such that $X^*_e\leq X'_e$ if $e\in\{e_{i}:k\leq i<k+\rL\}$, and $X^*_e=X'_e$ if $e\in\rE(K_n)\setminus\{e_{i}:1\leq i<k+\rL\}$.
\end{lemma}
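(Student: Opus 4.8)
The plan is to prove Lemma~\ref{lem:I coupling} by directly computing the conditional law of $\rX$ given the event $\{\rI(\rW,\rL)=k\}$, exploiting the fact that this event is a measurable function of only the indicator variables $(\mathbbm{1}[X_{e_i}\le\rW], 1\le i<k+\rL)$ together with the values $(X_{e_i}, k\le i<k+\rL)$. First I would recall from the definition \eqref{eq:def I} that for $k<n-\rL$,
\[
\{\rI=k\} = \Big\{\forall i<k,\ \exists j\in\{i,\dots,i+\rL-1\}: X_{e_j}>\rW\Big\} \cap \Big\{\forall j\in\{k,\dots,k+\rL-1\}: X_{e_j}\le\rW\Big\},
\]
so that $\{\rI=k\}$ depends only on the edges $\{e_i:1\le i<k+\rL\}$, and moreover depends on the edges $\{e_i:k\le i<k+\rL\}$ only through the event $B:=\{X_{e_j}\le\rW\ \forall j\in\{k,\dots,k+\rL-1\}\}$. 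Writing $A$ for the ``left part'' event $\{\forall i<k,\exists j\in\{i,\dots,i+\rL-1\}:X_{e_j}>\rW\}$, which is $\sigma(X_{e_i}:1\le i<k)$-measurable, we have $\{\rI=k\}=A\cap B$, and $A$, $B$, and the family $(X_e: e\in\rE(K_n)\setminus\{e_i:1\le i<k+\rL\})$ are mutually independent (as functions of disjoint sets of independent coordinates).

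From here the conditioning decouples cleanly. Conditioning on $A\cap B$: the coordinates $(X_e: e\in\rE(K_n)\setminus\{e_i:1\le i<k+\rL\})$ are independent of $A\cap B$, so they retain their unconditioned $\mathrm{Uniform}[0,1]$ law and their mutual independence; in particular every $e\notin\{e_i:1\le i<k+\rL\}$ still has $X_e^*\sim\mathrm{Uniform}[0,1]$. The coordinates $(X_{e_j}, k\le j<k+\rL)$ are independent of $A$ and, conditioned on $B=\{X_{e_j}\le\rW\ \forall j\}$, each $X_{e_j}^*$ is independently $\mathrm{Uniform}[0,\rW]$ — this is just the elementary fact that a $\mathrm{Uniform}[0,1]$ conditioned to be $\le\rW$ is $\mathrm{Uniform}[0,\rW]$, applied coordinatewise using independence. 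The coordinates $(X_{e_i}, 1\le i<k)$ are conditioned on $A$ alone and acquire some (complicated, but irrelevant) conditional law; crucially they are still independent of everything in the other two blocks because $A$, $B$, and the outside coordinates were jointly independent before conditioning. This establishes the three assertions: $X_e^*\sim\mathrm{Uniform}[0,\rW]$ for $e\in\{e_i:k\le i<k+\rL\}$, $X_e^*\sim\mathrm{Uniform}[0,1]$ for $e\notin\{e_i:1\le i<k+\rL\}$, and mutual independence of $(X_e^*: e\in\rE(K_n)\setminus\{e_i:1\le i<k\})$ together with their joint independence from $(X_e^*:e\in\{e_i:1\le i<k\})$.

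For the coupling claim, I would use the standard monotone coupling of $\mathrm{Uniform}[0,\rW]$ with $\mathrm{Uniform}[0,1]$: on each edge $e\in\{e_i:k\le i<k+\rL\}$, draw $X_e'\sim\mathrm{Uniform}[0,1]$ and set $X_e^*=\rW\cdot X_e'$ — wait, more cleanly, draw $X_e^*\sim\mathrm{Uniform}[0,\rW]$ and independently set $X_e' := X_e^*$ if $X_e^*$ alone is to be extended, but since we need $X_e'\sim\mathrm{Uniform}[0,1]$ globally the clean statement is: generate $\rX^*$ as above, and for $e\in\{e_i:k\le i<k+\rL\}$ let $X_e' = X_e^* + \rW\cdot(\text{an independent }\mathrm{Uniform}[0,1])$ truncated appropriately — simplest is to observe that if $U\sim\mathrm{Uniform}[0,\rW]$ and $V\sim\mathrm{Uniform}[0,1]$ independent, then $W:=U\mathbbm{1}[V\le\rW] + V\mathbbm{1}[V>\rW]$ is $\mathrm{Uniform}[0,1]$ and $W\ge U$ a.s.; applying this edgewise on $\{e_i:k\le i<k+\rL\}$ and setting $X_e'=X_e^*$ for all $e\notin\{e_i:1\le i<k+\rL\}$, while on $\{e_i:1\le i<k\}$ coupling $\rX^*$ and $\rX'$ arbitrarily (they are not used), gives $X_e^*\le X_e'$ on $\{e_i:k\le i<k+\rL\}$ and $X_e^*=X_e'$ off $\{e_i:1\le i<k+\rL\}$, with $\rX'$ a family of independent $\mathrm{Uniform}[0,1]$ on the relevant coordinates. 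The main obstacle is purely bookkeeping: one must be careful that the ``left'' coordinates $\{e_i:1\le i<k\}$, whose conditional law is messy, are genuinely independent of the blocks that matter — this is where the clean factorization $\{\rI=k\}=A\cap B$ into events measurable with respect to disjoint coordinate sets does all the work, so I would state that factorization explicitly and verify it against the definition of $\rI$ before invoking independence.
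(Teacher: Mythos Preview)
Your approach is essentially the paper's, but there is one genuine slip: the event
\[
A=\Big\{\forall i<k,\ \exists j\in\{i,\dots,i+\rL-1\}: X_{e_j}>\rW\Big\}
\]
is \emph{not} $\sigma(X_{e_i}:1\le i<k)$-measurable as you claim, since for $i$ close to $k$ the witness index $j$ may lie in $\{k,\ldots,k+\rL-2\}$. Consequently your asserted joint independence of $A$, $B$, and the outside coordinates fails as written.

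The repair is precisely what the paper does. On the event $B=\{X_{e_j}\le\rW\ \forall\, k\le j<k+\rL\}$, any putative witness $j\ge k$ is ruled out, so $A\cap B = A'\cap B$ where
\[
A':=\Big\{\forall i<k,\ \exists j\mbox{ with } i\le j<\min(i+\rL,k): X_{e_j}>\rW\Big\},
\]
and $A'$ genuinely lies in $\sigma(X_{e_i}:1\le i<k)$. With $A'$ in place of $A$, your three blocks are functions of disjoint coordinate sets and the rest of your argument (conditioning coordinatewise, then the explicit monotone coupling $W=U\mathbbm{1}[V\le\rW]+V\mathbbm{1}[V>\rW]$) goes through unchanged. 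This truncation is the one point the paper states explicitly and you should too.
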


\begin{proof}
    Using the definition of $\rI$, we know that
    \begin{align*}
        \big\{\rI=k\big\}\in\sigma\Big(\big\{X_{e_i}\leq\rW:1\leq i<k+\rL\big\}\Big)\,,
    \end{align*}
    from which it directly follows that the distribution of $X_{e}$ is a Uniform$[0,1]$ for any $e\notin\{e_{n,i}:1\leq i<k+\rL\}$. Furthermore, for any $e\in\{e_i:k\leq i<k+\rL\}$, $X_e$ conditioned on $\rI=k$ is the same as $X_e$ conditioned on $X_e\leq\rW$. Since $X_e$ is uniformly distributed, it follows that $X_e$ conditioned on $\rI=k$ is a Uniform$[0,\rW]$. Finally, note that
    \begin{align*}
        \big\{\rI=k\big\}=\big\{X_{e_i}\leq\rW:k\leq i<k+\rL\big\}\cap\bigcap_{j=1}^{k-1}\Big\{\exists j\leq i<\min\{j+\rL,k\}:X_{e_i}>\rW\Big\}\,,
    \end{align*}
    from which we see that the edges of $\rE(K_n)\setminus\{e_i,1\leq i< k\}$ are conditionally independent of $\{e_i,1\leq i< k\}$ given that $\rI=k$. It follows that all the edges in $\rE(\rK_n)\setminus\{e_i,1\leq i<k\}$ have independent weights in $\rK^*_n$. The existence of the coupling asserted in the lemma is then an immediate consequence.
\end{proof}

We now split the proof of Proposition~\ref{prop:good sets star path} into proving analogous statements for the three different sets $\rU_r^-$, $\rU_r^+$, and $\rU_\ell$.

\paragraph{First right set $\rU_r^-$.}

\begin{lemma}\label{lem:close right}
    For any $\epsilon>0$, we have
    \begin{align*}
        \mathbb{P}\Big(\exists U \in\rU_r^-:\wdiam\big(\mstr(\rK_n[U])\big)>\epsilon\Big)\longrightarrow0\,.
    \end{align*}
\end{lemma}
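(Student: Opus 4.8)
\textbf{Proof proposal for Lemma~\ref{lem:close right}.}

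The plan is to control the weighted diameter of $\mstr(\rK_n[U])$ uniformly over $U \in \rU_r^-$ by combining the coupling of Lemma~\ref{lem:I coupling} with the diameter tail bound of Theorem~\ref{thm:bound on diam(mst)}, using Proposition~\ref{prop:reduced MST} to account for the artificially light ``core'' edges $e_\rI,\dots,e_{\rI+\rL-1}$. First I would condition on $\{\rI = k\}$ for a fixed $k < n-\rL$; by Lemma~\ref{lem:I coupling} the conditioned weighted graph $\rK_n^*$ is coupled with a genuinely i.i.d.\ Uniform$[0,1]$-weighted $\rK_n'$ so that the weights agree off $\{e_i : k \le i < k+\rL\}$ and are only smaller on those $\rL$ ``core'' edges. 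Since each $U \in \rU_r^-$ has the form $V(\rI, j)$ with $\rI + \rL \le j \le \rI + \rL + \rL^{20}$, it has size $|U| \le \rL + 1 + \rL^{20} = O(\rL^{20})$, and it contains the path on the core edges as a subtree. Applying the first bound of Proposition~\ref{prop:reduced MST} with $T$ this core path (of at most $\rL$ edges, total $w^*$-weight at most $\rW\rL$) and with $\rG = \rK_n'[U]$, $\rG^* = \rK_n^*[U]$, gives
\begin{align*}
    \wdiam\big(\mstr(\rK_n^*[U])\big) \le \rW\rL + |U| \cdot \wdiam\big(\mstr(\rK_n'[U])\big) \le \rW\rL + (\rL^{20}+\rL+1)\cdot \wdiam\big(\mstr(\rK_n'[U])\big)\,.
\end{align*}

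Next I would take a union bound over the (at most $\rL^{20}+1$) sets $U \in \rU_r^-$. For each such $U$, the graph $\rK_n'[U]$ is a randomly weighted complete graph on $|U|$ vertices with i.i.d.\ Uniform$[0,1]$ weights, and $|U| \to \infty$ as $n \to \infty$ (since $|U| \ge \rL + 1 \to \infty$), so Theorem~\ref{thm:bound on diam(mst)} applies: $\wdiam(\mstr(\rK_n'[U])) \le 7\log^4|U| / |U|^{1/10}$ except with probability at most $4/|U|^{\log|U|}$. On this event,
\begin{align*}
    (\rL^{20}+\rL+1)\cdot \wdiam\big(\mstr(\rK_n'[U])\big) \le (\rL^{20}+\rL+1)\cdot \frac{7\log^4|U|}{|U|^{1/10}}\,,
\end{align*}
and since $|U| \ge \rL = \lfloor \log\log n\rfloor$ while $\rL^{20}/|U|^{1/10}$ — wait, here is the crux. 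We need $\rL^{20} \cdot |U|^{-1/10} \to 0$ uniformly over $\rU_r^-$, i.e.\ for the \emph{smallest} relevant $|U|$, which is $|U_0| = \rL+1$. But $\rL^{20}/(\rL+1)^{1/10} \to \infty$, so the first bound of Proposition~\ref{prop:reduced MST} alone is too weak for the small sets. This is the main obstacle, and I expect it is resolved by instead using the \emph{second} bound of Proposition~\ref{prop:reduced MST}: once the core path $T$ is a subtree of $\mstr(\rK_n^*[U])$ — which should hold because the core edges are much lighter than typical edges and thus survive into the MST (they are not the heaviest edge of any cycle, with high probability) — we get the far better bound $\wdiam(\mstr(\rK_n^*[U])) \le \rW\rL + 2\wdiam(\mstr(\rK_n'[U]))$, with no factor of $|U|$.

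So the real plan is: (i) show that with high probability, simultaneously for all $U \in \rU_r^-$, the core path $P_{\mathrm{core}} = e_\rI \cdots e_{\rI+\rL-1}$ is a subtree of $\mstr(\rK_n^*[U])$ — this follows since each core edge has weight $\le \rW = 1/\log n$, whereas to destroy it we would need a cycle through $U$ all of whose other edges are lighter, and a short calculation (using that the lightest edges in a clique on $|U| = O(\rL^{20}) = O((\log\log n)^{20})$ vertices are of order $1/|U|^2 = \Omega(1/(\log\log n)^{40}) \gg 1/\log n$... actually one must be slightly careful, but the point is core edges are comfortably among the MST edges whp); then (ii) apply the second bound of Proposition~\ref{prop:reduced MST} to get $\wdiam(\mstr(\rK_n^*[U])) \le \rW\rL + 2\wdiam(\mstr(\rK_n'[U]))$; (iii) union-bound Theorem~\ref{thm:bound on diam(mst)} over the at most $\rL^{20}+1$ sets $U$ (all with $|U| \ge \rL \to \infty$), noting $\sum_U 4/|U|^{\log|U|} \le (\rL^{20}+1)\cdot 4/\rL^{\log\rL} \to 0$; and (iv) conclude that on the good event $\wdiam(\mstr(\rK_n^*[U])) \le \rW\rL + 2 \cdot 7\log^4(n)/\rL^{1/10} \to 0$ — hmm, this also needs checking since $\log^4 n / (\log\log n)^{1/10} \to \infty$.

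Let me reconsider: the bound from Theorem~\ref{thm:bound on diam(mst)} on a graph with $m$ vertices is $7\log^4 m/m^{1/10}$, and for $U \in \rU_r^-$ with $m = |U| \in [\rL+1, \rL^{20}+\rL+1]$ we have $7\log^4 m / m^{1/10} \le 7 \log^4(\rL^{20}) / (\rL+1)^{1/10} = O((\log\log\log n)^4/(\log\log n)^{1/10}) \to 0$. Good — so with the second bound of Proposition~\ref{prop:reduced MST}, everything does go to zero. Thus the hard part is genuinely step (i): arguing that the $\rL$ core edges, which are Uniform$[0,\rW]$, remain MST edges in $\rK_n^*[U]$ for every $U \in \rU_r^-$ with high probability. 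I would handle this by noting that it suffices for each core edge $e_i = i(i+1)$ (or $in$) to not be the heaviest edge of any triangle $e_i, e_i', e_i''$ in $\rK_n[U]$; since there are at most $|U| \le \rL^{20}+\rL+1$ choices of the third vertex and each of the two non-core edges of such a triangle is Uniform$[0,1]$, the probability that both are below $\rW$ is at most $\rW^2$, so a union bound over the $\le \rL$ core edges, $\le |U|$ third vertices, and $\le \rL^{20}+1$ sets $U$ gives failure probability $O(\rL \cdot \rL^{20} \cdot \rL^{20} \cdot \rW^2) = O(\rL^{41}/\log^2 n) = O((\log\log n)^{41}/\log^2 n) \to 0$. (One small subtlety: we must rule out not just triangles but all cycles; but if a core edge $e_i$ is not the heaviest in any cycle it is in the MST, and it is heaviest in some cycle iff it is heaviest in some triangle — no, that's false in general. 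The correct statement: $e$ is in the MST iff it is not the heaviest edge of any cycle; to show $e_i$ is in the MST it suffices to exhibit it as a minimum-weight edge of the cut it defines, or equivalently note every cycle through $e_i$ contains another edge heavier than $e_i$. Since $e_i$ has weight $\le \rW$, it suffices that in $\rK_n^*[U]$, for each of the two endpoints... actually cleanest: $e_i \in \mstr(\rK_n^*[U])$ as long as $e_i$ is the lightest edge across \emph{some} cut, e.g. if one endpoint has all its other incident edges in $\rK_n^*[U]$ of weight $> \rW \ge X_{e_i}^*$; but other core edges incident to that endpoint also have weight $\le \rW$. The core edges form a path, so each core vertex has at most two incident core edges. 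Hmm — I think the clean route is: the core path $P_{\mathrm{core}}$ has all its edges of weight $\le \rW$; contract $P_{\mathrm{core}}$; then $e_i \in \mstr$ iff $e_i$ is not heaviest in a cycle, and any cycle through $e_i$ either stays within $P_{\mathrm{core}}$ (impossible, it's a path) or uses a non-core edge of weight $\le \rW$, which requires two non-core edges both of weight $\le \rW$ forming, with a core subpath, a cycle; union-bounding over pairs of vertices in $U$ and pairs of such edges gives $O(|U|^2 \cdot \rW^2)$, and the calculation above still closes.) This cycle-vs-triangle subtlety is where I would spend the most care in writing the full proof.
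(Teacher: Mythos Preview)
Your approach is essentially the paper's: condition on $\{\rI=k\}$, invoke the coupling of Lemma~\ref{lem:I coupling}, argue that the core path $T=H_n[U_0]$ is a subtree of $\mstr(\rK_n^*[U])$ so that the \emph{second} bound of Proposition~\ref{prop:reduced MST} applies, and then union-bound Theorem~\ref{thm:bound on diam(mst)} over the $O(\rL^{20})$ sets in $\rU_r^-$. The only substantive difference is in your step~(i). The paper sidesteps the cycle/triangle subtlety entirely by proving the stronger event that \emph{every} non-core edge of $\rK_n[U_{\rL^{20}}]$ has weight exceeding $\rW$; this has probability at least $(1-\rW)^{\binom{|U_{\rL^{20}}|}{2}} \ge 1 - O(\rL^{40}\rW) = 1-o(1)$, and on it the $\rL$ core edges are the lightest edges of $\rK_n^*[U]$ for every $U\in\rU_r^-$, so by Kruskal's algorithm they all lie in $\mstr(\rK_n^*[U])$. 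Your attempt at~(i) circles toward the same event but contains an inaccuracy: a cycle through a core edge $e_i$ in which $e_i$ is heaviest need only contain \emph{one} non-core edge of weight at most $\rW$ (e.g.\ two consecutive core edges closed by a single chord), not two. This does not actually break the argument---a union bound over non-core edges still gives failure probability $O(|U_{\rL^{20}}|^2\,\rW)=O(\rL^{40}/\log n)\to 0$---but that is precisely the paper's event, reached by a more circuitous route.
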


\begin{proof}
    Fix $0 < a < 1$ and assume $n$ is large enough so that $n^a < n-\rL-\rL^{20}$. Then, by Lemma~\ref{lem:bound I}, we have
    \begin{align*}
        &\hspace{-0.5cm}\mathbb{P}\Big(\exists U \in\rU_r^-:\wdiam\big(\mstr(\rK_n[U])\big)>\epsilon\Big) \\ 
        \leq \ &
        \mathbb{P}\Big(\exists U \in\rU_r^-:\wdiam\big(\mstr(\rK_n[U])\big)>\epsilon ~\Big|~ \rI < n-\rL-\rL^{20} \Big) + \mathbb{P}\big( \rI \geq n-\rL-\rL^{20} \big) \\
        = \ & 
        \mathbb{P}\Big(\exists U \in\rU_r^-:\wdiam\big(\mstr(\rK_n[U])\big)>\epsilon ~\Big|~ \rI < n-\rL-\rL^{20} \Big) + o(1) \,.
    \end{align*}
    
    Next fix $k<n-\rL-\rL^{20}$ and condition on the event $\rI=k$. Under this conditioning, $\rU_r^-=\rU_r^-(\rI)=\rU_r^-(k)$ is a deterministic sequence of sets. Further recall from \eqref{eq:definition of U} that $U_0 = V(k,k+\rL)$ consists of the endpoints of the edges $e_k,\dots,e_{k+\rL-1}$, so equals $\{k,\dots,k+\rL\}$ if $H_n$ is the path $\Path_n$ and equals $\{k,\dots,k+\rL-1,n\}$ if $H_n$ is the star $\Star_n$. Let $T=H_n[U_0]$. Since $\rI = k < n-\rL$, all edges in $T$ have weight less than $\rW$. Now, suppose that all other edges of $\rK_n[U_{\rL^{20}}]$ have weight larger than $\rW$. In this case, $T$ is a subtree of $\mstr(\rK_n[U_{\rL^{20}}])$, from which it follows that $T$ is a subtree of $\mstr(\rK_n[U_i])$ for any $0 \leq i \leq \rL^{20}$ (since $U_i \subset U_{\rL^{20}}$ for such $U_i$). Now, using that $\{\rI=k\}\in\sigma(\{X_{e_i}:1\leq i<k+\rL\})$, we have
    \begin{align*}
        \mathbb{P}\Big(\forall e\in\rE\big(\rK_n[U_{\rL^{20}}]\big)\setminus\rE(T),X_e>\rW~\Big|~\rI=k\Big)=\big(1-\rW\big)^{\binom{\rL^{20}}{2}-\rL}\,.
    \end{align*}
    Since $\rW = \Wvalue$, we have $1-\rW \geq \exp(-2\rW)$ for $n$ large, so
    \begin{align*}
        \mathbb{P}\Big(\rE(T)\subset\rE\big(\mstr\big(\rK_n[U_{\rL^{20}}]\big)\big)~\Big|~\rI=k\Big)
        &\geq
        \big(1-\rW\big)^{\binom{\rL^{20}}{2}-\rL}\\
        &\geq
        \exp\left(-2\rW \left(\binom{\rL^{20}}{2} - \rL \right)\right)\\
        &\geq
        \exp \left( -\rW \rL^{40} \right)\\
        &\geq 
        1 - \frac{(\log \log n)^{40}}{\log n} \,,
    \end{align*}
    the last inequality holding since $\rW=\frac{1}{\log n}$, $\rL=\lfloor\log\log n\rfloor$, and $e^{-x} \geq 1-x$ for $x \geq 0$. Hence,
    \begin{align}
        &\mathbb{P}\Big(\exists U \in\rU_r^-:\wdiam\big(\mstr(\rK_n[U])\big)>\epsilon~\Big|~\rI=k\Big)\label{eq:bound Ur-}\\
        &\hspace{0.5cm}\leq\mathbb{P}\Big(\exists U \in\rU_r^-:\wdiam\big(\mstr(\rK_n[U])\big)>\epsilon,\rE(T)\subset\rE\big(\mstr\big(\rK_n[U_{\rL^{20}}]\big)\big)~\Big|~\rI=k\Big)+\frac{(\log \log n)^{40}}{\log n} \,.\notag
    \end{align}
    
    Let $(\rK_n^*,\rK'_n)$ be as in Lemma~\ref{lem:I coupling}. By the definition of $\rK^*_n$ and \eqref{eq:bound Ur-}, we have that
    \begin{align*}
        &\mathbb{P}\Big(\exists U \in\rU_r^-:\wdiam\big(\mstr(\rK_n[U])\big)>\epsilon~\Big|~\rI=k\Big)\\
        &\hspace{0.5cm}\leq\mathbb{P}\Big(\exists U \in\rU_r^-(k):\wdiam\big(\mstr(\rK^*_n[U])\big)>\epsilon,\rE(T)\subset\rE\big(\mstr\big(\rK^*_n[U_{\rL^{20}}]\big)\big)\Big)+\frac{(\log \log n)^{40}}{\log n} \,.
    \end{align*}
    Now, note that if $\rE(T)\subset\rE(\mstr(\rK^*_n[U_{\rL^{20}}]))$, then for any $U\in\rU_r^-(k)$, $\rE(T)\subset\rE(\mstr(\rK^*_n[U]))$, since $\mstr(\rK^*_n[U_{\rL^{20}}])[U]$ is a subgraph of $\mstr(\rK^*_n[U])$. Applying Proposition~\ref{prop:reduced MST} to $\mstr(\rK^*_n[U])$ and $\mstr(\rK'_n[U])$, it follows that
    \begin{align*}
        &\hspace{-0.5cm}\mathbb{P}\Big(\exists U \in\rU_r^-(k):\wdiam\big(\mstr(\rK^*_n[U])\big)>\epsilon,\rE(T)\subset\rE\big(\mstr\big(\rK^*_n[U_{\rL^{20}}]\big)\big)\Big)\\
        &\leq\mathbb{P}\Big(\exists U \in\rU_r^-(k):\mathop{w^*}(T)+2\times\wdiam\big(\mstr(\rK'_n[U])\big)>\epsilon,\rE(T)\subset\rE\big(\mstr\big(\rK^*_n[U_{\rL^{20}}]\big)\big)\Big)\\
        &\leq\mathbb{P}\Big(\exists U \in\rU_r^-(k):\mathop{w^*}(T)+2\times\wdiam\big(\mstr(\rK'_n[U])\big)>\epsilon\Big)\,.
    \end{align*}
    Using that $\mathop{w^*}(T)\leq\rW\rL$ and combining the two previous inequalities yields the bound
    \begin{align}
        &\hspace{-0.5cm}\mathbb{P}\Big(\exists U \in\rU_r^-:\wdiam\big(\mstr(\rK_n[U])\big)>\epsilon~\Big|~\rI=k\Big)\label{eq:bound first right}\\
        &\leq\mathbb{P}\Big(\exists U \in\rU_r^-(k):\wdiam\big(\mstr(\rK'_n[U])\big)>(\epsilon-\rW\rL)/2\Big)+\frac{(\log\log n)^{40}}{\log n}\,.\notag
    \end{align}
    We can now replace $\rK'_n$ by $\rK_n$ since they are identically distributed. Furthermore, recall that Theorem~\ref{thm:bound on diam(mst)} states that, for $n$ sufficiently large, we have
    \begin{align*}
    \mathbb{P}\left(\wdiam\big(\mstr(\rK_n)\big) \geq \frac{7 \log^4 n}{n^{1/10}}\right) \le \frac{4}{n^{\log n}} \,.
    \end{align*}
    Since $\rL \rightarrow \infty$ and $\rW\rL \rightarrow 0$ as $n \rightarrow \infty$, and since any set $U\in\rU_r^-$ has size $|U|\geq|U_0|=\rL+1$, we can choose $n$ large enough so that, for any set $U\in\rU_r^-$, we have $7\log^4|U|/|U|^{1/10}\leq(\epsilon-\rW\rL)/2$. It follows that
    \begin{align*}
        &\hspace{-0.5cm}\mathbb{P}\Big(\exists U\in\rU_r^-(k):\wdiam\big(\mstr(\rK'_n[U])\big)>(\epsilon-\rW\rL)/2\Big)\\
        &\leq\mathbb{P}\left(\exists U\in\rU_r^-(k):\wdiam\big(\mstr(\rK_n[U])\big)\geq\frac{7\log^4|U|}{|U|^{1/10}}\right)\\
        &\leq\sum_{U\in\rU_r^-(k)}\frac{4}{|U|^{\log|U|}}
    \end{align*}
    The final step of the proof is to use that $\rU_r^-(k)=(U_i,0\leq i\leq \rL^{20})$ where $|U_i|=|U_0|+i=\rL+i+1$, along with the fact that $4/n^{\log n}\leq 1/n^2$ for $n$ large enough, to obtain that
    \begin{align*}
        \mathbb{P}\Big(\exists U\in\rU_r^-(k):\wdiam\big(\mstr(\rK'_n[U])\big)>(\epsilon-\rW\rL)/2\Big)&\leq\sum_{k=\rL+1}^{\rL+\rL^{20}+1}\frac{1}{k^2}\leq\frac{1}{\rL}\leq\frac{2}{\log\log n}\,.
    \end{align*}
    Plugging this into \eqref{eq:bound first right}, it follows that
    \begin{align*}
        \mathbb{P}\Big(\exists U\in\rU_r^-:\wdiam\big(\mstr(\rK_n[U])\big)>\epsilon~\Big|~\rI=k\Big) \leq \frac{(\log \log n)^{40}}{\log n} + \frac{2}{\log \log n} \,.
    \end{align*}
    Finally, since the previous inequality holds for any $k<n-\rL-\rL^{20}$, we have 
    \begin{align*}
        &\mathbb{P}\Big(\exists U\in\rU_r^-:\wdiam\big(\mstr(\rK_n[U])\big)>\epsilon\Big)\\
        &\hspace{0.5cm}=\mathbb{P}\Big(\exists U\in\rU_r^-:\wdiam\big(\mstr(\rK_n[U])\big)>\epsilon ~\Big|~ \rI < n - \rL - \rL^{20} \Big) + o(1)\\
        &\hspace{0.5cm}\leq \frac{(\log \log n)^{40}}{\log n} + \frac{1}{(\log \log n)^{20}} + o(1)\longrightarrow0\,,
    \end{align*}
    which is the desired result.
\end{proof}

\paragraph{Second right set $\rU_r^+$.}

\begin{lemma}\label{lem:far right}
    For any $\epsilon>0$, we have
    \begin{align*}
        \mathbb{P}\Big(\exists U\in\rU_r^+:\wdiam\big(\mstr(\rK_n[U])\big)>\epsilon\Big)\longrightarrow0\,.
    \end{align*}
\end{lemma}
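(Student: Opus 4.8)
The plan is to mimic the structure of the proof of Lemma~\ref{lem:close right}, but now the sets $U \in \rU_r^+$ all have size $|U| \ge \rL^{20}$, which is a quantity tending to infinity (albeit slowly). This is good for two reasons: first, the diameter bound of Theorem~\ref{thm:bound on diam(mst)} is only useful once the vertex set is large; and second — crucially — the ``anchor tree'' $T = H_n[U_0]$ consisting of the $\rL$ low-weight edges $e_k, \dots, e_{k+\rL-1}$ now has total weight at most $\rW\rL$, which is negligible compared to the error we can tolerate. So I would again condition on $\{\rI = k\}$ for a fixed $k$, invoke Lemma~\ref{lem:bound I} to discard the contribution of $\{\rI \ge n - \rL - \rL^{20}\}$, and then work with the conditioned graph $\rK^*_n$ via the coupling of Lemma~\ref{lem:I coupling}.

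\textbf{First step: replace $\rK^*_n$ by $\rK'_n$.} Under $\{\rI = k\}$, the only edges whose weights are distorted (conditioned to lie in $[0,\rW]$) are exactly the $\rL$ edges of $T = H_n[U_0]$; every other edge inside any $U \in \rU_r^+$ has its original $\mathrm{Uniform}[0,1]$ law. I want to use Proposition~\ref{prop:reduced MST} to transfer a $\wdiam$ bound from $\mstr(\rK'_n[U])$ (which is genuinely a $\mathrm{Uniform}[0,1]$ model, so Theorem~\ref{thm:bound on diam(mst)} applies) to $\mstr(\rK^*_n[U])$. The favorable version of Proposition~\ref{prop:reduced MST} — the one giving $\mathop{w^*}(T) + 2\wdiam(\mstr(\rG))$ rather than $\mathop{w^*}(T) + |\rV(T)|\wdiam(\mstr(\rG))$ — requires that $T$ be a subtree of $\mstr(\rK^*_n[U])$. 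As in Lemma~\ref{lem:close right}, I would show $T \subset \mstr(\rK^*_n[U])$ whenever all edges of $\rK_n[U]$ other than those of $T$ have weight exceeding $\rW$; but here $|U|$ can be as large as $n$, so the probability that \emph{all} $\binom{|U|}{2} - \rL$ of those edges exceed $\rW = 1/\log n$ is astronomically small, and this route fails. Instead I would use the weaker bound $\wdiam(\mstr(\rK^*_n[U])) \le \mathop{w^*}(T) + |\rV(T)| \cdot \wdiam(\mstr(\rK'_n[U])) \le \rW\rL + (\rL+1)\wdiam(\mstr(\rK'_n[U]))$, which holds unconditionally under the coupling (since $w^* \le w'$ on $\rE(T)$ and $w^* = w'$ elsewhere, so $\mstr(\rK^*_n[U])$ and $\mstr(\rK'_n[U])$ are related as in Proposition~\ref{prop:reduced MST} with $\rG = \rK'_n[U]$, $\rG^* = \rK^*_n[U]$). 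The factor $\rL+1 = O(\log\log n)$ is harmless because it is multiplied by $\wdiam(\mstr(\rK'_n[U]))$, which Theorem~\ref{thm:bound on diam(mst)} makes polynomially small in $|U| \ge \rL^{20}$.

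\textbf{Second step: union bound via Theorem~\ref{thm:bound on diam(mst)}.} Having reduced to controlling $\max\{\wdiam(\mstr(\rK'_n[U])) : U \in \rU_r^+(k)\}$, and having replaced $\rK'_n$ by $\rK_n$ since they are equidistributed, I choose $n$ large enough that for every $U \in \rU_r^+$ we have both $(\rL+1)\cdot \tfrac{7\log^4 |U|}{|U|^{1/10}} + \rW\rL \le \epsilon$ (using $|U| \ge \rL^{20}$, so $\log^4|U|/|U|^{1/10} \le C(\log\log n)^4/(\log\log n)^2 \to 0$, times $\rL+1 = O(\log\log n)$, still $\to 0$) and $\tfrac{4}{|U|^{\log|U|}} \le \tfrac{1}{|U|^2}$. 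Then
\[
\mathbb{P}\Big(\exists U\in\rU_r^+(k):\wdiam\big(\mstr(\rK_n[U])\big)>\tfrac{7\log^4|U|}{|U|^{1/10}}\Big)\le\sum_{U\in\rU_r^+(k)}\frac{1}{|U|^2}\le\sum_{s=\rL^{20}}^{n}\frac{1}{s^2}\le\frac{1}{\rL^{20}-1}\,,
\]
which tends to $0$. Combining with the first step gives, for every $k < n-\rL-\rL^{20}$, that $\mathbb{P}(\exists U\in\rU_r^+:\wdiam(\mstr(\rK_n[U]))>\epsilon \mid \rI=k) \to 0$ uniformly in $k$; averaging over $k$ and adding back the $o(1)$ from Lemma~\ref{lem:bound I} finishes the proof.

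\textbf{Expected main obstacle.} The only real subtlety is the one flagged above: one cannot force $T$ to be a subtree of the MST on a large set $U$ with non-negligible probability, so the ``favorable'' version of Proposition~\ref{prop:reduced MST} is unavailable and one must live with the factor $|\rV(T)| = \rL+1$. Verifying that this factor is still absorbed — i.e.\ that $(\rL+1)\cdot\wdiam(\mstr(\rK_n[U])) \to 0$ in probability, uniformly over $U \in \rU_r^+$ — is where the choice of the threshold $\rL^{20}$ (rather than a smaller power) pays off: it makes $|U|^{1/10} \ge \rL^2 = (\log\log n)^2(1+o(1))$ dominate the $\log^4|U| = O((\log\log n)^4)$ and the extra $\rL+1$ factor with room to spare. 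Everything else is a routine repackaging of the argument in Lemma~\ref{lem:close right}.
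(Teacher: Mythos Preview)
Your proposal is correct and follows essentially the same approach as the paper: condition on $\{\rI=k\}$, invoke the coupling of Lemma~\ref{lem:I coupling}, apply the \emph{general} bound of Proposition~\ref{prop:reduced MST} with the factor $|\rV(T)|=\rL+1$ (correctly noting that the sharper ``$T\subset\mstr$'' version is unavailable for large $U$), and finish with Theorem~\ref{thm:bound on diam(mst)} plus a union bound using $|U|\ge \rL+\rL^{20}+1$. One small slip in your parenthetical arithmetic: at $|U|\approx\rL^{20}$ you have $\log^4|U|/|U|^{1/10}\asymp (\log\rL)^4/\rL^2 \approx (\log\log\log n)^4/(\log\log n)^2$, not $(\log\log n)^4/(\log\log n)^2$ --- as written your fraction diverges --- but with the correct numerator the product with $\rL+1$ indeed tends to $0$, so the argument goes through exactly as you intend.
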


\begin{proof}
    Fix $0 < a < 1$ and assume $n$ is large enough so that $n^a < n-\rL-\rL^{20}$. Then, by Lemma~\ref{lem:bound I}, we have
    \begin{align*}
        &\mathbb{P}\Big(\exists U \in\rU_r^+:\wdiam\big(\mstr(\rK_n[U])\big)>\epsilon\Big) \\ 
        &\hspace{0.5cm}\leq
        \mathbb{P}\Big(\exists U \in\rU_r^+:\wdiam\big(\mstr(\rK_n[U])\big)>\epsilon ~\Big|~ \rI < n-\rL-\rL^{20} \Big) + \mathbb{P}\big( \rI \geq n-\rL-\rL^{20}\big) \\
        &\hspace{0.5cm}= 
        \mathbb{P}\Big(\exists U \in\rU_r^+:\wdiam\big(\mstr(\rK_n[U])\big)>\epsilon ~\Big|~ \rI < n-\rL-\rL^{20} \Big) + o(1) \,.
    \end{align*}
    Fix now $k<n-\rL-\rL^{20}$ and condition on the event $\rI=k$. Let $T=H_n[U_0]$ and let $(\rK_n',\rK_n^*)$ be given by the coupling in Lemma~\ref{lem:I coupling}. Then, by Proposition~\ref{prop:reduced MST},
    \begin{align*}
        &\mathbb{P}\Big(\exists U \in\rU_r^+:\wdiam\big(\mstr(\rK_n[U])\big)>\epsilon ~\Big|~ \rI = k \Big)\\
        &\hspace{0.5cm}\leq \mathbb{P}\Big(\exists U\in\rU_r^+(k):\wdiam\big(\mstr(\rK_n^*[U])\big)>\epsilon\Big)\\
        &\hspace{0.5cm}\leq \mathbb{P}\Big(\exists U\in\rU_r^+(k):\mathop{w^*}(T) + |\rV(T)| \times \wdiam\big(\mstr(\rK_n'[U])\big)>\epsilon\Big)\\
        &\hspace{0.5cm}\leq \mathbb{P}\Big(\exists U\in\rU_r^+(k):\wdiam\big(\mstr(\rK_n[U])\big)>(\epsilon - \rW\rL)/(\rL + 1) \Big)\,,
    \end{align*}
    where the last step follows from the fact that $\mathop{w^*}(T)\leq\rW\rL$ conditionally given that $\rI<n-\rL$, that $|\rV(T)|=\rL+1$, and that $\rK'_n$ is distributed as $\rK_n$.
    Since $x\mapsto\frac{\log^3 x}{x^{1/10}}$ is a decreasing function for large enough $x$, since any set $U\in\rU_r^+$ has size $|U|\geq|U_{\rL^{20}}|=\rL+\rL^{20}+1$, and since $\rL=\lfloor\log\log n\rfloor\rightarrow\infty$ and $\rW\rL=\lfloor\log\log n\rfloor/\log n\rightarrow0$, we can choose $n$ large enough so that, for any $U\in\rU_r^+$
    \begin{align*}
        \frac{7 \log^4 |U|}{|U|^{1/10}} 
        \leq
        \frac{7 \log^4 (\rL^{20})}{(\rL^{20})^{1/10}}
        =
        \frac{7\cdot 20^4\log^4\cdot (\rL)}{\rL^{2}} 
        \leq 
        \frac{\epsilon - \rW\rL}{\rL+1} \,.
    \end{align*}
    Then, recalling that $\rU_r^+(k)=(U_i,\rL^{20}<i\leq n-k-\rL)$ where $|U_i|=|U_0|+i=\rL+i+1$, Theorem~\ref{thm:bound on diam(mst)} gives us
    \begin{align*}
        &\mathbb{P}\Big(\exists U\in\rU_r^+(k):\wdiam\big(\mstr(\rK_n[U])\big)>(\epsilon - \rW\rL)/(\rL + 1) \Big)\\
        &\hspace{0.5cm}\leq \mathbb{P}\left(\exists U\in\rU_r^+(k):\wdiam\big(\mstr(\rK_n[U])\big)>\frac{7 \log^4 |U|}{|U|^{1/10}}  \right)\\
        &\hspace{0.5cm}\leq \sum_{U\in\rU_r^+(k)}\frac{4}{|U|^{\log|U|}}\\
        &\hspace{0.5cm}\leq\frac{1}{\rL+\rL^{20}}\,,
    \end{align*}
    where the last inequality uses that $x^{\log x}\geq 4x^2$ for $x$ large enough, along with the fact that $|U_i|=\rL+i+1$.
    Therefore, 
    \begin{align*}
        &\mathbb{P}\Big(\exists U\in\rU_r^+:\wdiam\big(\mstr(\rK_n[U])\big)>\epsilon\Big)\\
        &\hspace{0.5cm}\leq \mathbb{P}\Big(\exists U\in\rU_r^+(k):\wdiam\big(\mstr(\rK_n[U])\big)>(\epsilon - \rW\rL)/(\rL + 1) \Big) + o(1)\\
        &\hspace{0.5cm}\leq \frac{1}{\rL+\rL^{20}} + o(1)\longrightarrow0\,,
    \end{align*}
    concluding the proof of the lemma.
\end{proof}

\paragraph{Left set $\rU_\ell$.}

\begin{lemma}\label{lem:left}
    For any $\epsilon>0$, we have
    \begin{align*}
        \mathbb{P}\Big(\exists U\in\rU_\ell:\wdiam\big(\mstr(\rK_n[U])\big)>\epsilon\Big)\longrightarrow0\,.
    \end{align*}
\end{lemma}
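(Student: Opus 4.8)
The plan is to mirror the structure of the proofs of Lemmas~\ref{lem:close right} and~\ref{lem:far right}, but now the ``growing'' is happening on the left side of $U_0$, starting from a block that already includes essentially all of $U_0$ plus everything to the right of $\rI$. First I would condition on $\{\rI = k\}$ for a fixed $k < n-\rL$ (using Lemma~\ref{lem:bound I} to discard the event $\{\rI = n-\rL\}$, which is $o(1)$), so that the sequence $\rU_\ell = \rU_\ell(k)$ becomes deterministic: its first element is $U_{n-k-\rL} = V(\rI, n)$, consisting of vertices $\{k, \dots, n\}$ in the path case (or $\{k,\dots,n-1\}\cup\{n\}$ in the star case), and subsequent elements add the vertices $k-1, k-2, \dots, 1$ one at a time. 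Crucially, every set $U \in \rU_\ell(k)$ has size $|U| \geq n - k + 1$.

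The key point distinguishing this case from the others is that a uniform tail bound via Theorem~\ref{thm:bound on diam(mst)} is only useful when the sets are large, and here the sets in $\rU_\ell$ are large precisely when $k$ is small --- but $k = \rI$ could, a priori, be of order $n^a$ for $a < 1$. So the natural approach is: apply Lemma~\ref{lem:I coupling} to couple $\rK_n^*$ (the law given $\rI = k$) with $\rK_n'$ (genuinely i.i.d.\ uniform), where the only edges whose weights differ are $e_k, \dots, e_{k+\rL-1}$, all of which lie in $T := H_n[U_0]$ and have $w^*$-weight at most $\rW\rL$ in total. Then invoke Proposition~\ref{prop:reduced MST}: if I can ensure $T$ is a subtree of $\mstr(\rK_n^*[U])$ for each relevant $U$, I get the strong bound $\wdiam(\mstr(\rK_n^*[U])) \le \rW\rL + 2\wdiam(\mstr(\rK_n'[U]))$; otherwise I fall back on the weaker bound with the factor $|\rV(T)| = \rL+1$, which is still fine since $\rW\rL\cdot(\rL+1)\to 0$ and $(\rL+1)\cdot\wdiam(\mstr(\rK_n'[U]))$ is controlled as long as $\wdiam(\mstr(\rK_n'[U]))$ is polynomially small, which it is whenever $|U| = n-k+1$ is polynomially large in $n$. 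Since we have conditioned on $\rI = k < n^a$ with $a<1$ (again via Lemma~\ref{lem:bound I}, which lets us restrict to $k < n^a$ at the cost of an $o(1)$ term), every $U \in \rU_\ell(k)$ satisfies $|U| \ge n - n^a + 1 \ge n/2$ for large $n$, so Theorem~\ref{thm:bound on diam(mst)} gives $\mathbb{P}(\wdiam(\mstr(\rK_n'[U])) \ge 7\log^4|U|/|U|^{1/10}) \le 4/|U|^{\log|U|} \le 4/(n/2)^{\log(n/2)}$, which is super-polynomially small.

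Concretely, the steps in order: (1) peel off $\{\rI = n-\rL\}$ and then $\{\rI \ge n^a\}$ using Lemma~\ref{lem:bound I}, reducing to conditioning on a fixed $k < n^a$; (2) apply Lemma~\ref{lem:I coupling} to pass to $(\rK_n^*, \rK_n')$; (3) apply the (second, or if necessary first) bound of Proposition~\ref{prop:reduced MST} with $T = H_n[U_0]$ to reduce $\wdiam(\mstr(\rK_n^*[U]))$ to $\rW\rL\cdot O(\rL) + O(\rL)\cdot\wdiam(\mstr(\rK_n'[U]))$; (4) use $\rK_n' \overset{d}{=} \rK_n$, the lower bound $|U| \ge n/2$, and a union bound over the at most $n$ sets $U \in \rU_\ell(k)$ together with Theorem~\ref{thm:bound on diam(mst)} to conclude that, for $n$ large, $\mathbb{P}(\exists U \in \rU_\ell : \wdiam(\mstr(\rK_n[U])) > \eps \mid \rI = k) \le n \cdot 4/(n/2)^{\log(n/2)} = o(1)$ uniformly in $k < n^a$; (5) combine with the $o(1)$ terms from steps (1).

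The main obstacle --- such as it is --- is step (3): making sure that one does not actually need $T$ to be a subtree of the MST, since forcing that event (as in Lemma~\ref{lem:close right}) would cost a factor like $(1-\rW)^{\binom{n}{2}}$, which is catastrophically small when the ambient set has size $\Theta(n)$. The resolution is that we do \emph{not} need it: the weaker, always-valid bound of Proposition~\ref{prop:reduced MST}, $\wdiam(\mstr(\rK_n^*[U])) \le w^*(T) + |\rV(T)|\cdot\wdiam(\mstr(\rK_n'[U]))$, already suffices here because $|\rV(T)| = \rL+1 = O(\log\log n)$ grows so slowly that multiplying it by the polynomially-small diameter bound still tends to $0$, and because $w^*(T) \le \rW\rL \to 0$ outright. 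So unlike the first-right-set case, no subtree-preservation event needs to be introduced at all, which actually makes this case cleaner than Lemma~\ref{lem:close right}.
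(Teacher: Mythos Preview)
There is a genuine gap in your step (2)--(3). You write that under the coupling of Lemma~\ref{lem:I coupling} ``the only edges whose weights differ are $e_k,\dots,e_{k+\rL-1}$'', but this is not what the lemma asserts: it guarantees $X_e^*=X_e'$ only for $e\notin\{e_i:1\le i<k+\rL\}$, and says nothing about the relationship between $X_e^*$ and $X_e'$ for $e\in\{e_1,\dots,e_{k-1}\}$. Indeed, conditioning on $\{\rI=k\}$ genuinely biases those weights (for instance, one checks that $X_{e_{k-1}}^*>\rW$ almost surely), and the coupling cannot make them agree with, or be dominated by, independent uniforms. But every set $U\in\rU_\ell(k)$ contains both endpoints of at least one edge from $\{e_1,\dots,e_{k-1}\}$, so the hypothesis of Proposition~\ref{prop:reduced MST} --- that $w^*$ and $w$ agree outside the chosen tree $T=H_n[U_0]$ --- fails on $\rK_n^*[U]$ versus $\rK_n'[U]$. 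Enlarging $T$ to absorb those edges does not help either, since neither direction of inequality $w^*\le w$ holds on them.

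This is precisely why the paper's proof departs from the template of Lemmas~\ref{lem:close right} and~\ref{lem:far right}. Rather than condition on $\{\rI=k\}$, it conditions on $\{\rI<n^a\}$ with $a<\tfrac14$, builds a different coupling with a graph $\rK_n'$ in which \emph{all} of $e_1,\dots,e_m$ (with $m=\lceil n^a\rceil+\rL-1$) have Uniform$[0,\rW]$ weights, and then --- crucially --- shows directly that with high probability none of these $m$ edges appear in $\mstr(\rK_n'[V(k,n)])$ for any $k<n^a$ (the event $E'$). On $E'$, the modified weights are irrelevant to the MST, so one can pass back to genuine i.i.d.\ uniform weights without invoking Proposition~\ref{prop:reduced MST} at all. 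Your proposed argument would need an analogous ``these edges are not in the MST'' step to handle $e_1,\dots,e_{k-1}$; once you add that, you are essentially reconstructing the paper's proof.
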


\begin{proof}
    Fix $a<\frac{1}{4}$. Thanks to Lemma~\ref{lem:bound I}, we know that $\mathbb{P}(\rI\geq n^a)\to0$. Moreover, note that under this event, any set $U\in\rU_\ell$ has size $|U|\geq n-k\geq n-n^a$. Our strategy now is to prove that, due to the large size of these sets, conditioning on the event $\{\rI<n^a\}$ does not notably affect the structure of $\mstr(\rK_n[U])$.
    
    Let us try to understand how the edge weights $\{e_1,\dots,e_{n-1}\}$ behave given that $\rI<n^a$; call $\rK^a_n$ the random weighted graph corresponding to the distribution of $\rK_n$ conditionally given that $\rI<n^a$. Recall that $\{\rI<n^a\}\in\sigma(\{X_{e_i}:1\leq i<\lceil n^a\rceil+\rL\})$ and write $m=\lceil n^a\rceil+\rL-1$ (note that $e_1,\dots,e_m$ are the only edges affected when we condition on $\rI < n^a$). Let $\rA=\{i\leq m:X_{e_i}\leq\rW\}$ and let $\mathcal{A}$ be the collection of sets $A\subset[m]$ such that there exists $i<n^a$ with $\{i,\ldots,i+\rL-1\}\subset A$. Then, by definition, $\{\rA\in\mathcal{A}\}=\{\rI<n^a\}$. Now, for any $A \in \mathcal{A}$, conditionally given that $\rA=A$, the weights of $e_1,\ldots,e_m$ are independent of each other and are distributed as Uniform$[0,W]$ or Uniform$[W,1]$, according to whether or not the index $i$ of the edge $e_i$ lies in $A$. This means that for any $x_1,\ldots,x_m\in[0,1]$, and any $A \in \mathcal{A}$, we have
    \begin{align*}
        \mathbb{P}\Big(\forall i\in[m]:X_{e_i}\leq x_i~\Big|~\rA=A,\rI<n^a\Big)&=\mathbb{P}\Big(\forall i\in[m]:X_{e_i}\leq x_i~\Big|~\rA=A\Big)\\
        &=\left(\prod_{i\in A}\frac{\min\{x_i,\rW\}}{\rW}\right)\left(\prod_{i\in[m]\setminus A}\frac{\max\{x_i,\rW\}-\rW}{1-\rW}\right)\,.
    \end{align*}
    Now, using that $\frac{\max\{x_i,\rW\}-\rW}{1-\rW}\leq\frac{\min\{x_i,\rW\}}{\rW}$, it follows that
    \begin{align*}
        \mathbb{P}\Big(\forall i\in[m]:X_{e_i}\leq x_i~\Big|~\rA=A,\rI<n^a\Big)&\leq\mathbb{P}\Big(\forall i\in[m]:X'_{e_i}\leq x_i\Big)\,,
    \end{align*}
    where $(X'_{e_1},\ldots,X'_{e_m})$ are independent Uniform$[0,\rW]$. This implies that there exists a generic weighted graph $\rK'_n=(K_n,\rX')$ with independent weights, where $X'_e$ is a Uniform$[0,1]$ if $e\notin\{e_1,\ldots,e_m\}$ and a Uniform$[0,\rW]$ otherwise, and a coupling between $\rK'_n$ and $\rK^a_n$ such that $X'_e\leq X^a_e$ for any $e\in\rE(K_n)$. We now use this coupling to prove the lemma.

    Consider the event
    \begin{align*}
        E'=\Big\{\forall k<n^a,\forall i\in[m],e_i\notin\rE\big(\mstr(\rK'_n[V(k,n)])\big)\Big\}
    \end{align*}
    By using two union bounds, we have that
    \begin{align*}
        \mathbb{P}(E')\geq1-\sum_{k<n^a}\sum_{i\in[m]}\mathbb{P}\Big(e_i\in\rE\big(\mstr(\rK'_n[V(k,n)])\big)\Big)\,.
    \end{align*}
    For $k$ and $i$ as in the above sum, if there exists $j\in V(k,n)\setminus e_{i}$ such that the weight of $e_{i}$ is larger than the weight of the two other edges in the triangle $\Delta_{i,j}$ formed by $e_{i}$ and $j$, then $e_{i}$ is not in the \mstr\ of $\rK'_n[V(k,n)]$. This means that
    \begin{align*}
        &\mathbb{P}\Big(e_{i}\in\rE\big(\mstr(\rK'_n[V(k,n)])\big)~\Big|~X'_{e_i}\Big)\\
        &\hspace{0.5cm}\leq\mathbb{P}\Big(\forall j\in V(k,n)\setminus e_{i}, \max(X_e' : e \in \Delta_{i,j})>X_{e_i}'~\Big|~X'_{e_i}\Big)\\
        &\hspace{0.5cm}=\left(1-(X'_{e_i})^2\right)^{|V(k,n)|-2}
    \end{align*}
    Using that $X'_{e_i}$ is uniformly distributed over $[0,\rW]$ and that $|V(k,n)|=n-k+1$, it follows that
    \begin{align*}
        \mathbb{P}\Big(e_{i}\in\rE\big(\mstr(\rK'_n[V(k,n)])\big)\Big)&\leq\frac{1}{\rW}\int_0^\rW(1-x^2)^{n-k+1}dx\\
        &\leq\frac{1}{\rW}\int_0^\infty e^{-(n-k+1)x^2}dx\\
        &=\frac{\sqrt{\pi}}{2\rW\sqrt{n-k+1}}\,,
    \end{align*}
    from which we obtain
    \begin{align*}
        \mathbb{P}(E')\geq1-\sum_{k<n^a}\sum_{i\in[m]}\frac{\sqrt{\pi}}{2\rW\sqrt{n-k-1}}\geq1-\frac{\sqrt{\pi}}{2}\frac{n^am}{\rW\sqrt{n-n^a-1}}\longrightarrow1\,,
    \end{align*}
    where the last convergence follows from $\rW=\frac{1}{\log n}$, $m=\lceil n^a\rceil+\rL-1 =\lceil n^a\rceil+\Lvalue-1$, and $a<\frac{1}{4}$. 
    
    Combining the fact that $\mathbb{P}(\rI<n^a)\to1$ with the definitions of $\rK^a_n$ and $\rU_\ell$, we now have that
    \begin{align}
        &\mathbb{P}\Big(\exists U\in\rU_\ell:\wdiam\big(\mstr(\rK_n[U])\big)>\epsilon\Big)\label{eq:left sets 1}\\
        &\hspace{0.5cm}=\mathbb{P}\Big(\exists U\in\rU_\ell:\wdiam\big(\mstr(\rK_n[U])\big)>\epsilon~\Big|~\rI<n^a\Big)+o(1)\notag\\
        &\hspace{0.5cm}\leq\mathbb{P}\Big(\exists k<n^a:\wdiam\big(\mstr(\rK^a_n[V(k,n)])\big)>\epsilon\Big)+o(1)\,,\notag
    \end{align}
    where the last inequality comes from the definition of $\rK_n^a$, and is due to $\rU_\ell=(V(\rI-1,n),\ldots,V(1,n))\subset(V(n^a-1,n),\ldots,V(1,n)))$ whenever $\rI<n^a$. Note that the coupling between $\rK^a_n$ and $\rK'_n$ only reduces the weight of the edges $e_1,\ldots,e_m$ in $\rK'_n$ relative to $\rK_n^a$, from which it follows that, if $e_i\notin\rE(\mstr(\rK'_n[V(k,n)]))$ for some $i\in[m]$, then $e_i\notin\rE(\mstr(\rK^a_n[V(k,n)]))$. This implies that, conditionally given $E'$, the trees $\mstr(\rK^a_n[V(k,n)])$ and $\mstr(\rK'_n[V(k,n)])$ are equal. Using that $\mathbb{P}(E')\to1$, we thus obtain
    \begin{align}
        &\mathbb{P}\Big(\exists k<n^a:\wdiam\big(\mstr(\rK^a_n[V(k,n)])\big)>\epsilon\Big)\label{eq:left sets 2}\\
        &\hspace{0.5cm}=\mathbb{P}\Big(\exists k<n^a:\wdiam\big(\mstr(\rK^a_n[V(k,n)])\big)>\epsilon~\Big|~E'\Big)+o(1)\notag\\
        &\hspace{0.5cm}=\mathbb{P}\Big(\exists k<n^a:\wdiam\big(\mstr(\rK'_n[V(k,n)])\big)>\epsilon~\Big|~E'\Big)+o(1)\,.\notag
    \end{align}
    Finally, consider a coupling between $\rK'_n$ and $\rK_n$ where $X'_e\leq X_e$ for any $e\in\rE(K_n)$ and such that $X'_e=X_e$ whenever $e\notin\{e_1,\ldots,e_m\}$. By using that $\mstr(\rK'_n)=\mstr(\rK_n)$ whenever $E'$ holds, it follows that
    \begin{align}
        &\mathbb{P}\Big(\exists k<n^a:\wdiam\big(\mstr(\rK'_n[V(k,n)])\big)>\epsilon~\Big|~E'\Big)\label{eq:left sets 3}\\
        &\hspace{0.5cm}=\mathbb{P}\Big(\exists k<n^a:\wdiam\big(\mstr(\rK_n[V(k,n)])\big)>\epsilon~\Big|~E'\Big)\notag\\
        &\hspace{0.5cm}=\mathbb{P}\Big(\exists k<n^a:\wdiam\big(\mstr(\rK_n[V(k,n)])\big)>\epsilon\Big) + o(1)\,,\notag
    \end{align}
    where we used that $\mathbb{P}(E')\to1$ for the last equality. Now, using Theorem~\ref{thm:bound on diam(mst)} similarly as before, we obtain that
    \begin{align*}
        \mathbb{P}\Big(\exists k<n^a:\wdiam\big(\mstr(\rK_n[V(k,n)])\big)>\epsilon\Big)\longrightarrow0\,.
    \end{align*}
    The proof of this lemma now follows by combining \eqref{eq:left sets 1}, \eqref{eq:left sets 2}, and \eqref{eq:left sets 3}.
\end{proof}

With the above lemmas in hand, the proof of Proposition~\ref{prop:good sets star path} is routine.

\begin{proof}[Proof of Proposition~\ref{prop:good sets star path}]
    Fix $\epsilon>0$ and let $\rW=\frac{1}{\log n}$ and $\rL=\lfloor\log\log n\rfloor$. Then
    \begin{align*}
        \mathbb{P}\Big(\exists U\in\rU:\wdiam\big(\mstr(\rK_n[U])\big)>\epsilon \Big)&=        \mathbb{P}\Big(\exists U\in\rU_r^-\cup\rU_r^+\cup\rU_\ell:\wdiam\big(\mstr(\rK_n[U])\big)>\epsilon \Big)\\
        &\leq\mathbb{P}\Big(\exists U\in\rU_r^-:\wdiam\big(\mstr(\rK_n[U])\big)>\epsilon \Big)\\
        &\hspace{0.5cm}+\mathbb{P}\Big(\exists U\in\rU_r^+:\wdiam\big(\mstr(\rK_n[U])\big)>\epsilon \Big)\\
        &\hspace{0.5cm}+\mathbb{P}\Big(\exists U\in\rU_\ell:\wdiam\big(\mstr(\rK_n[U])\big)>\epsilon \Big)\,,
    \end{align*}
    and the right hand side converges to $0$ by Lemma~\ref{lem:close right}, \ref{lem:far right}, and \ref{lem:left}, proving the proposition.
\end{proof}

\section{Conclusion}\label{sec:conclusion}

\subsection{More general weight distributions}

The extension of Theorem~\ref{thm:MAIN} from Uniform$[0,1]$ to more general weight distributions is quite straightforward. Fix a probability density function $f:[0,\infty) \to [0,\infty)$,
and let 
$\rho^*=\sup(x:\int_0^xf(y)dy<1)$.
Let 
$\rX'=(X_e',e \in \rE(K_n))$ be independent random variables with density $f$, and let 
$\rK_n'=(K_n,\rX')$. 
\begin{thm}\label{thm:MAIN2}
Suppose that $f(0)>0$, that $f$ is continuous at zero, and that $\rho^* < \infty$. 
Fix any sequence $(H_n,n \ge 1)$ of connected graphs with $H_n$ being a spanning subgraph of $K_n$. Then for any $\eps > 0$, as $n \to \infty$, 
\begin{itemize}
    \item[(a)] with high probability there exists an \mstr\ sequence $\rS$ for $(\rK_n',H_n)$ with $\wt(\rS) \le \rho^*+\eps$, and 
    \item[(b)] there exists $\delta > 0$ such that with high probability, given any optimizing sequence $\rS=(S_1,\ldots,S_m)$ for $(\rK_n',H_n)$ with $\wt(\rS) \le \rho^*-\eps$, the final spanning subgraph $H_{n,m}$ has weight $\mathop{w}(H_{n,m}) \ge \delta n  \mathop{w}(\mstr(\rK_n))$.
\end{itemize}
In particular, $\cost(\rK_n',H_n) \convp \rho^*$ as $n \to \infty$.
\end{thm}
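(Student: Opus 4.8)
The plan is to reduce everything to Theorem~\ref{thm:MAIN} by a change of variables. Write $F(x)=\int_0^x f(y)\,dy$ and realise $\rK_n'$ through the coupling $X_e'=F^{-1}(U_e)$, where $(U_e,e\in\rE(K_n))$ are independent $\mathrm{Uniform}[0,1]$, so that $\rK_n=(K_n,(U_e))$ is the uniformly weighted graph of Theorem~\ref{thm:MAIN}. Since $F^{-1}$ is non-decreasing it preserves the relative order of the edge weights, so for every $S\subseteq[n]$ the trees $\mstr(\rK_n'[S])$ and $\mstr(\rK_n[S])$ have the same edge set; an easy induction then shows that any sequence of vertex sets produces the same subgraph sequence, as unweighted graphs, under $\Phi_{\rK_n'}$ as under $\Phi_{\rK_n}$, and in particular is an \mstr\ sequence for $(\rK_n',H_n)$ if and only if it is one for $(\rK_n,H_n)$. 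I will use two elementary facts about $F^{-1}$. First, since $F$ is continuous with $F(x)<1$ for $x<\rho^*$ and $F(\rho^*)=1$, we have $F^{-1}(u)<\rho^*$ for all $u\in(0,1)$, so every edge weight of $\rK_n'$ is a.s.\ strictly below $\rho^*$. Second, since $f$ is continuous at $0$ with $f(0)>0$, there is a constant $t_0>0$ such that $F^{-1}(t)\le \tfrac{2}{f(0)}\,t$ for all $t\le t_0$. (Also $\rK_n'$ is a.s.\ generic, so all results of the paper apply to it.)

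For part~(a), I would take the \mstr\ sequence $\rS_n$ built in the proof of Theorem~\ref{thm:MAIN} and, by the structure-preservation above, regard it as an \mstr\ sequence for $(\rK_n',H_n)$; the task is then to bound $\wt(\rK_n',H_n,\rS_n)$. The key point is that, on inspecting that proof together with Lemmas~\ref{lem:simple_weightbound} and~\ref{lem:step2_general}, Proposition~\ref{prop:eating algorithm}, Corollary~\ref{cor:eating algorithm} and Lemma~\ref{lem:cost to U}, the edge set of every subgraph $H_{n,i-1}[S_i]$ encountered along $\rS_n$ is -- with the single exception of the initial step $S_i=U_0$ in the star/path base case -- the union of one edge and a path contained in some $\mstr(\rK_n[U])$ arising in the construction. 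Hence, on the high-probability event that every such $\mstr(\rK_n[U])$ has weighted diameter at most $t_0$ (which is precisely what the proofs of Propositions~\ref{prop:eating} and~\ref{prop:good sets star path} establish, using Theorem~\ref{thm:bound on diam(mst)}), the $\rK_n'$-weight of such a step is at most $F^{-1}(U_e)+\sum_{e'\in P}F^{-1}(U_{e'})<\rho^*+\tfrac{2}{f(0)}\wdiam(\mstr(\rK_n[U]))$, whereas the exceptional step, whose $\rL$ edges all have $U$-weight at most $\rW$, has $\rK_n'$-weight at most $\rL\,F^{-1}(\rW)\le \tfrac{2\rL\rW}{f(0)}=O(\tfrac{\log\log n}{\log n})$. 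Since all the weighted diameters $\wdiam(\mstr(\rK_n[U]))$ and the quantity $\rL\rW$ tend to $0$ in probability, this gives $\mathbb{P}(\cost(\rK_n',H_n)\le \rho^*+\eps)\to 1$ for every $\eps>0$.

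For part~(b), I would follow the lower bound of Theorem~\ref{thm:MAIN}. Fix $0<\eps<\rho^*$ and set $E_{n,\eps}=\{e\in\rE(H_n):X_e'>\rho^*-\eps\}$. Since $\mathbb{P}(X'>\rho^*-\eps)=1-F(\rho^*-\eps)>0$ and $H_n$ has at least $n-1$ edges, $\mathbb{P}(|E_{n,\eps}|\ge cn)\to1$ with $c:=(1-F(\rho^*-\eps))/3$. By Theorem~\ref{thm:bound on diam(mst)} the maximum $U$-weight of an edge of $\mstr(\rK_n)$ is at most $\wdiam(\mstr(\rK_n))\to0$ in probability, hence the maximum $X'$-weight of an edge of $\mstr(\rK_n')$ tends to $0$ in probability, so with high probability $E_{n,\eps}\cap\rE(\mstr(\rK_n'))=\varnothing$. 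Then, exactly as in Theorem~\ref{thm:MAIN}, for any $e=uv\in\rE(H_n)\setminus\rE(\mstr(\rK_n'))$ and any optimizing sequence $\rS$ for $(\rK_n',H_n)$ with $\wt(\rS)\le\rho^*-\eps$, no $S_i$ contains both $u$ and $v$ (the first step deleting $e$ would have weight at least $X_e'>\rho^*-\eps$), so $E_{n,\eps}\subseteq\rE(H_{n,m})$ and $\mathop{w}(H_{n,m})\ge cn(\rho^*-\eps)$ with high probability. Finally, on the good event, $\mathop{w}(\mstr(\rK_n'))\le\tfrac{2}{f(0)}\mathop{w}(\mstr(\rK_n))$, which is $O_{\mathbb{P}}(1)$ (in fact $\convp\zeta(3)/f(0)$ by \cite{frieze1985value} and the coupling), so $\mathop{w}(H_{n,m})\ge\delta n\,\mathop{w}(\mstr(\rK_n'))$ with high probability for a suitable $\delta=\delta(\eps)>0$. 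The ``in particular'' statement follows as in Theorem~\ref{thm:MAIN}: part~(a) gives $\cost(\rK_n',H_n)\le\rho^*+\eps$ with high probability, and part~(b) shows that with high probability no \mstr\ sequence has weight $\le\rho^*-\eps$ (such a sequence would force $H_{n,m}=\mstr(\rK_n')$, impossible once $\delta n>1$), so $\cost(\rK_n',H_n)\convp\rho^*$.

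I expect the work to be bookkeeping rather than new ideas: checking that every bound ``$1+(\cdots)$'' in Sections~\ref{sec:eating} and~\ref{sec:clique star path} arises from a single edge weight (so that $1$ may be replaced by $\sup_e X_e'<\rho^*$), that the one exception scales like $\rL\,F^{-1}(\rW)=o(1)$, and that the few high-probability events (the diameter bound of Theorem~\ref{thm:bound on diam(mst)}, binomial concentration, smallness of \mstr\ edges) combine correctly. Conceptually, continuity of $f$ at $0$ together with $f(0)>0$ is exactly what makes the weighted-diameter and $\rW\rL$ error terms negligible after the change of variables, and finiteness of $\rho^*$ is what makes $\sup_e X_e'<\rho^*$ a useful substitute for the edge-weight bound $1$.
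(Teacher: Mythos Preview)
Your proposal is correct and follows essentially the same approach as the paper: the same inverse-CDF coupling (the paper writes $X_e=g(X_e')$ with $g=F$, you write $X_e'=F^{-1}(U_e)$), the same linearisation $F^{-1}(t)\le \tfrac{2}{f(0)}t$ near $0$, and the same observation that every step-weight in the construction decomposes as ``one edge plus a short \mstr-path'', so that the bound $1+\wdiam(\cdot)$ becomes $\rho^*+\tfrac{2}{f(0)}\wdiam(\cdot)$. Your treatment of part~(a) is in fact slightly more explicit than the paper's sketch (you trace the one-edge-plus-path structure through Lemmas~\ref{lem:simple_weightbound}, \ref{lem:step2_general} and the $U_0$ step), and your extra check in part~(b) that $E_{n,\eps}\cap\rE(\mstr(\rK_n'))=\varnothing$ is harmless though not actually needed for the weight lower bound.
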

The proof is very similar to that of Theorem~\ref{thm:MAIN}, so we only describe the changes that are required to prove the more general version. 

The proof of the lower bound, part (b), proceeds just as in the case of Uniform$[0,1]$ edge weights: for any $\eps > 0$, any optimizing sequence $\rS=(S_0,\ldots,S_m)$ for $(\rK_n',H_n)$ with $\wt(\rS) \le \rho^*-\eps$ leaves edges of weight greater than $\rho^*-\eps$ untouched, so all such edges appear in the final subgraph $H_{n,m}$. The number of such edges is Binomial$\big(|\rE(H_n)|,\int_{\rho^*-\eps}^{\rho^*}f(x)dx\big)$-distributed, so with high probability there are a linear number of such edges. On the other hand, $\mathop{w}(\mstr{\rK_n}) \to \zeta(3)/f(0)$ in probability \cite{frieze1985value}, and the lower bound follows.

For the upper bound, note that the bounds on the total cost of the optimizing sequences we construct essentially all have the form $A+B$ where $A$ is the greatest weight of a single edge, and $B$ is the weighted diameter of the minimum spanning tree of some subgraph of $K_n$. In order to prove  Theorem~\ref{thm:MAIN}, we used that $A \le 1$, and proved using  Theorem~\ref{thm:bound on diam(mst)} and Proposition~\ref{prop:good sets star path} that we could take $B$ as close to zero as we wished (by a careful choice of optimizing sequence). For the edge weights $\rX'$, we can simply replace the bound $A \le 1$ by the bound $A \le \rho^*$. To show that we can make $B$ as close to zero as we like, 
we can carry through the same proof as in the Uniform$[0,1]$ case, provided that versions of  Theorem~\ref{thm:bound on diam(mst)} and Proposition~\ref{prop:good sets star path} are still available to us.
 
To see that  Theorem~\ref{thm:bound on diam(mst)} and Proposition~\ref{prop:good sets star path} do essentially carry over to the setting of $\rK_n'=(K_n,\rX')$, 
we make use of the following coupling. 
For $t \in [0,\rho^*]$ let $g(t)=\mathbb{P}(X' \le t)$, so that $g(X')$ is Uniform$[0,1]$-distributed. We can thus couple the random weights $\rX'$ to independent Uniform$[0,1]$ weights $\rX=(X_e,e \in \rE(K_n))$ by taking $X_e=g(X_e')$, and thereby couple $\rK_n'=(K_n,\rX')$ to $\rK_n=(K_n,\rX)$. The edge weights $\rX'=(X'_e,e \in \rE(K_n))$ are almost surely pairwise distinct, and on this event, 
the ordering of $\rE(K_n)$ in increasing order of weight is the same for the weights $\rX$ and $\rX'$ and thus $\mstr(\rK'_n)=\mstr(\rK_n)$. 

Since $f(0)>0$ and $f$ is continuous, for all $u$ sufficiently small we have $f(u)>f(0)/2$ and $g(u)\ge uf(0)/2$. It follows in particular that if $X_e \le u f(0)/2$
then $X_e'\le 2X_e/f(0) \le u$. 
This observation implies that, under the above coupling between $\rK_n$ and $\rK_n'$, if $\wdiam(\mstr(\rK_n)) \le uf(0)/2$ then
$\wdiam(\mstr(\rK_n')) \le u$, and Theorem~\ref{thm:bound on diam(mst)} thus yields that for all $n$ sufficiently large, 
\begin{equation}\label{eq:bound on diam(mst)_replacement}
\mathbb{P}\left(\wdiam(\mstr(\rK')) \ge \frac{2}{f(0)}\frac{7 \log^4 n}{n^{1/10}}\right) \le 
\mathbb{P}\left(\wdiam(\mstr(\rK')) \ge \frac{7\log^4 n}{n^{1/10}}\right)
\le
\frac{4}{n^{\log n}}\, .
\end{equation}
Similarly, Proposition~\ref{prop:good sets star path} implies that (in the notation of that proposition), for all $\eps > 0$
    \[
    \mathbb{P}\Big(\exists U\in\rU:\wdiam\big(\mstr(\rK_n'[U])\big)>2\epsilon/f(0) \Big)\longrightarrow 0\,
\]
as $n \to \infty$. But since $\eps > 0$ was arbitrary, this implies that also 
\begin{equation}\label{eq:good sets star path_replacement}
\mathbb{P}\Big(\exists U\in\rU:\wdiam\big(\mstr(\rK_n'[U])\big)>\epsilon \Big)\longrightarrow 0\, 
\end{equation}
 for all $\eps > 0$. 

All the remaining ingredients of the proof of Theorem~\ref{thm:MAIN} use only information about the graph-theoretic structure of $\mstr(\rK_n)$, not its weights, and so carry over to the setting of non-uniform weights (using the fact that $\mstr(\rK_n')$ and $\mstr(\rK_n)$ have the same distributions as unweighted graphs -- indeed, they are equal under the above coupling). By running the proof of Theorem~\ref{thm:MAIN} but replacing all expressions of the form $1+\wdiam(F)$ by $\rho^*+\wdiam(F)$, and when needed invoking 
\eqref{eq:bound on diam(mst)_replacement} and \eqref{eq:good sets star path_replacement} in place of Theorem~\ref{thm:bound on diam(mst)} and Proposition~\ref{prop:good sets star path}, respectively, we obtain Theorem~\ref{thm:MAIN2}.

\medskip
Before concluding this subsection, we note that if $\rho^*=\infty$ then for any $r > 0$, the probability that at least one edge of $H_n$ has weight at least $r$ tends to $1$, so $\mathbb{P}(\cost(\rK_n',H_n) > r) \to 1$ as $n \to \infty$. Thus, in this case we also have $\cost(\rK_n',H_n) \convp \rho^*$. 

\subsection{Open questions and future directions}
This work introduces the notion of local minimum spanning tree searches and proves a weak law of large numbers for the cost of such local searches.
The framework naturally suggests several directions for future research, some of which we now highlight.

\begin{itemize}
    \item Our main results concern low-weight \mstr\ sequences $\rS$ for randomly weighted complete graphs, where $\wt(\rS)$ is measured in the $L_\infty$ sense: it is the maximum weight of any single step of the optimizing sequence. However, one may wish to vary the norm used to  measure the weights of optimizing sequences. The other $L_p$ norms are natural alternatives, and correspond to studying the values 
    \begin{align*}
    \cost{}_p(\rG,H)=\min\Big\{\wt{}_p(\rS):\rS\textrm{ is an \mstr\  sequence for }(\rG,H)\Big\}
\end{align*}
    where 
    \begin{align*}
    \wt{}_p(\rS)=\left(\sum_{i=1}^m\big(\wt(\rS,i)\big)^p\right)^\frac{1}{p}
\end{align*}
is the $L^p$ norm of $(\wt(\rS,i),1\leq i\leq m)$. 

At first sight, using $L^1$ weights may seem very natural, as it corresponds to the total weight of all the subgraphs modified by the sequence. Mathematically, however, in the setting considered in this paper the $L^1$ cost is quite easy to understand. Indeed, 
for $\rK_n$ and $H_n$ as in Theorem~\ref{thm:MAIN}, by considering the sequence $\rS=([n])$ which simply replaces $H_n$ by $\mstr(\rK_n)$ in one step, we obtain that
\begin{align*}
    \cost{}_1(\rK_n,H_n)\leq w(H_n)\,.
\end{align*}
Conversely, since any edge of $e\in\rE(H_n)\setminus\rE(\mstr(\rK_n))$ must be removed in order to form the \mstr, for any $\mstr$ sequence $\rS=(S_1,\ldots,S_m)$, there must exist $i\in[m]$ such that $e\in\rE(H_{n,i-1}[S_i])$. This implies that
\begin{align*}
    \wt{}_1(\rS)\geq\sum_{i\in[m]}w\big(H_{n,i-1}[S_i]\big)\geq\sum_{e\in\rE(H_n)\setminus\rE(\mstr(\rK_n))}X_e=\big(1+o_\mathbb{P}(1)\big)w(H_n)\,,
\end{align*}
where the final asymptotic follows from the fact that $H_n$ is chosen independently of $\rX$ and that any fixed edge belongs to the \mstr\ with probability $(n-1)/\binom{n}{2}=o_\mathbb{P}(1)$. 
Since the lower bound $\sum_{e\in\rE(H_n)\setminus\rE(\mstr(\rK_n))}X_e$ does not depend on the choice of \mstr\ sequence $\mathbb{S}$, it is also a lower bound on $\cost_1(\rK_n,H_n)$, and thus 
\begin{align*}
    \frac{\cost{}_1(\rK_n,H_n)}{w(H_n)}\longrightarrow1
\end{align*}
in probability. When $p < 1$, this argument can be adapted to prove the same convergence result for $\cost{}_p(\rK_n,H_n)$. However, when $p > 1$ it is less clear what behaviour to expect, and in particular it is unclear whether the dependence on the initial spanning subgraph $H_n$ will play a more complicated role.
\item Another natural modification of the setting is to measure the cost of a step by the {\em size}, rather than the weight, of the subgraph which is replaced by its \mstr. That is, we may define 
 \begin{align*}
    \mathrm{wt}'(\rG,H,\rS):=\max\Big\{\big|\rE(H_{i-1}[S_i])\big|:1\leq i\leq m\Big\}\,,
\end{align*}
and study
\[
\mathrm{cost}'(\rG,H)=\min\Big\{\mathrm{wt}'(\rG,H,\rS):\rS\textrm{ is an \mstr\  sequence for }(\rG,H)\Big\}
\]
For this notion of cost, even the behaviour of $\cost{}'(\rK_n,K_n)$ is unclear to us; how $\cost{}'(\rK_n,H_n)$ will depend on the starting graph $H_n$ is likewise unclear. However, at a minimum we expect that $\cost{}'(\rK_n,H_n) \to \infty$ in probability, provided that the initial spanning subgraphs $H_n$ are chosen independently of the weights.
\item Our result proves the {\em existence} of \mstr\ sequences of weight at most $(\rho^*+\eps)$, with high probability. However, our construction does not yield insight into the ubiquity of such sequences, and it would be interesting to know whether low-weight \mstr\ sequences can be found easily and without using ``non-local'' information. For example, suppose that at each step we choose a subgraph to optimize uniformly at random over all subgraphs of weight at most $w$. For which values of $w$ will the resulting sequence be an \mstr\ sequence with high probability? 

\item What is the asymptotic behaviour of $\cost(\rK_n,H_n)-\rho^*$? In particular, is there a sequence $a_n$ such that $a_n(\cost(\rK_n,H_n)-\rho^*)$ converges in distribution to a non-trivial random variable? 

\item What happens if $(K_n,\rX_n)$ is replaced by a different fixed connected, weighted graph $\rG_n=(G_n,\rX_n)$? How does the asymptotic behaviour of $\cost(\rG_n,H_n)$ depend on $G_n$? 

\item What happens if the iid structure of the edge weights of $\rK_n$ is modified? For example, one might generate $\rX_n$ by first taking $n$ independent, uniformly random points $P_1,\ldots,P_n \in [0,1]^d$, then letting $X_{ij} = |P_i-P_j|$ be the Euclidean distance between $i$ and $j$.

\end{itemize}

\appendix

\section{Bounds on the weighted diameter} \label{app:diam_bd_proof}

In this section, we prove Theorem~\ref{thm:bound on diam(mst)}.
The proof exploits Kruskal's algorithm for constructing minimum spanning trees. We first recall a very useful connection between Kruskal's algorithm run on the complete graph with independent Uniform$[0,1]$ edge weights $\rX=(X_e,e \in E(K_n))$ and the Erd\H{o}s-R\'enyi random graph process. In this setting, Kruskal's algorithm may be phrased as follows. Write $N=\binom{n}{2}$
\begin{itemize}
    \item Order the edges of $\rE(\rK_n)$ in incresasing order of weight as $e_1,\ldots,e_{N}$. 
    \item Let $F_0=([n],\emptyset)$ be the forest with vertex set $n$ and no edges.
    \item For $1 \le i \le {N}$, if $e_i$ joins distinct connected components of $F_{i-1}$ then let $\rE(F_i)= \rE(F_{i-1})\cup\{e_i\}$; otherwise let $F_i=F_{i-1}$.
\end{itemize}
The final forest $F_{N}$ is  $\mstr(\rK_n)$. 

\medskip

The Erd\H{o}s-R\'enyi random graph process can be described very similarly: 
\begin{itemize}
    \item Order the edges of $\rE(\rK_n)$ in increasing order of weight as $e_1,\ldots,e_{N}$.
    \item Let $G_0=([n],\emptyset)$ be the graph with vertex set $n$ and no edges.
    \item For $1 \le i \le {N}$, let $\rE(G_i)= \rE(G_{i-1})\cup\{e_i\}$. 
\end{itemize}
It is straightforward to see by induction that $F_i$ and $G_i$ always have the same connected components and, more strongly, that $F_i$ is the minimum spanning forest of $G_i$ (in that each tree of $F_i$ is the minimum spanning tree of the corresponding connected component of $G_i$). 

We also take $G(n,p)$ to be the subgraph of $\rK_n$ with edge set $\{e\in\rE(\rK_n): X_e \le p\}$. Since we ordered the edges in increasing order of weight as $e_1,\ldots,e_{N}$, the edge set of $G(n,p)$ is thus $\{e_1,\ldots,e_m\}$, where $m=m(p)$ is maximal so that $X_{e_m} \le p$. We likewise let $F(n,p)$ be the subgraph of $F_{N}$ consisting of all edges of $F_{N}$ with weight at most $p$, and note that $F(n,p)=F_{m(p)}$.

With this coupling in hand, we next explain our approach to bounding the weighted diameter of $\mstr(\rK_n)$. Our bound has two parts. Fix $p \in (0,1)$, and let $T^{\max}_{n,p}$ be the largest connected component of $F(n,p)$, with ties broken lexicographically. Note that $T^{\max}_{n,p}$ is a subgraph of $\mstr(\rK_n)$. Further write $L_{n,p}$ for the greatest number of edges in any path of $\mstr(\rK_n)$ which has exactly one vertex lying in $T^{\max}_{n,p}$. Finally,  write $W_n$ for the greatest weight of any edge of $\mstr(\rK_n)$. 
\begin{prop}\label{prop:mst_upper}
For any $p \in (0,1)$, 
\begin{equation*}
    \wdiam\big(\mstr(\rK_n)\big) \le 
    p\big(|T^{\max}_{n,p}|-1\big) + 2W_n L_{n,p}\, .
\end{equation*}
\end{prop}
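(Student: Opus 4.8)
The plan is to fix two vertices $u,v \in [n]$ and bound $\dist_{\mstr(\rK_n)}(u,v)$. Since $\mstr(\rK_n)$ is a tree, this distance is simply the weight of the unique $u$–$v$ path in $\mstr(\rK_n)$; maximizing over $u,v$ then gives the weighted diameter. Throughout I abbreviate $T = \mstr(\rK_n)$ and $T' = T^{\max}_{n,p}$, and I use two facts that are immediate from the setup: $T'$ is a subtree of $T$, all of whose edges have weight at most $p$ (it is a component of $F(n,p)$, whose edges are exactly the $\mstr(\rK_n)$-edges of weight at most $p$), while every edge of $T$ has weight at most $W_n$ by definition of $W_n$.

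The main device is the projection $\pi\colon V(T) \to V(T')$ onto the subtree $T'$: for $x \in V(T)$, let $\pi(x)$ be a vertex of $T'$ at minimum $T$-distance from $x$. (Uniqueness holds by a branch-point argument, but will not be needed.) The key structural observation is that the $T$-path from $x$ to $\pi(x)$ meets $T'$ \emph{only} at $\pi(x)$: if some other vertex $y \in V(T')$ lay strictly between $x$ and $\pi(x)$ on this path, then $y$ would be closer to $x$ than $\pi(x)$, contradicting the choice of $\pi(x)$. Consequently this path has exactly one vertex in $T' = T^{\max}_{n,p}$, so by the definition of $L_{n,p}$ it has at most $L_{n,p}$ edges, each of weight at most $W_n$; hence its weight is at most $W_n L_{n,p}$. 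Separately, since $\pi(u)$ and $\pi(v)$ both lie in the connected subtree $T'$, the $T$-path between them lies entirely inside $T'$, so it has at most $|T^{\max}_{n,p}| - 1$ edges, each of weight at most $p$, and hence weight at most $p\big(|T^{\max}_{n,p}| - 1\big)$.

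To assemble the bound, I would note that the three $T$-paths from $u$ to $\pi(u)$, from $\pi(u)$ to $\pi(v)$, and from $\pi(v)$ to $v$ together form a connected subgraph of $T$ containing both $u$ and $v$; therefore their union contains the unique $u$–$v$ path of $T$. Since all edge weights are positive, $\dist_T(u,v)$ is at most the sum of the weights of the three paths, and the three estimates from the previous paragraph give $\dist_T(u,v) \le W_n L_{n,p} + p\big(|T^{\max}_{n,p}|-1\big) + W_n L_{n,p}$. Taking the maximum over all $u,v \in [n]$ yields the claimed inequality.

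I do not expect a real obstacle here: this is essentially a deterministic statement about paths in a tree relative to a distinguished subtree, with all of the randomness already packaged into the fixed quantities $T^{\max}_{n,p}$, $L_{n,p}$, and $W_n$. The only point needing a little care is the verification that each ``leg'' $P_T(x,\pi(x))$ has \emph{exactly} one vertex in $T^{\max}_{n,p}$ — this is precisely what licenses invoking the definition of $L_{n,p}$ — after which the remainder is routine bookkeeping about edge counts and weights.
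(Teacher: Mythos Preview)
Your proof is correct and follows essentially the same approach as the paper's: decompose (or cover) the $u$--$v$ path by a middle segment lying in $T^{\max}_{n,p}$ together with at most two outer segments each meeting $T^{\max}_{n,p}$ in exactly one vertex, then bound each piece separately. Your projection-based phrasing is in fact slightly tidier, since it automatically handles the degenerate case where the $u$--$v$ path does not meet $T^{\max}_{n,p}$ at all (there $\pi(u)=\pi(v)$ and the middle segment is trivial), a case the paper's direct decomposition glosses over.
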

\begin{proof} 
Fix any path $P$ in $\mstr(\rK_n)$. Then the set of vertices of $P$ contained in $T^{\max}_{n,p}$ form a subpath of $P$, since otherwise $\mstr(\rK_n)$ would contain a cycle; call this subpath $P_0$. Then $P_0$ contains at most $|T^{\max}_{n,p}|$ vertices, so at most $|T^{\max}_{n,p}|-1$ edges, and each such edge has weight at most $p$. Moreover, the edges of $P$ not lying in $P_0$ form at most two subpaths of $P$. Each of these subpaths has at most $L_{n,p}$ edges, so the number of edges of $P$ which are not edges of $P_0$ is at most $2L_{n,p}$; and the edges of $P$ which are not edges of $P_0$ all have weight at most $W_n$.
\end{proof}
To exploit this bound and prove Theorem~\ref{thm:bound on diam(mst)}, we must bound $|T^{\max}_{n,p}|$ and $L_{n,p}$, for some well chosen value of $p$, and bound $W_n$. The latter bound is the easiest, and we take care of it first. We will need the following bound on the probability of connectedness of $G(n,p)$. We believe we have seen this bound in the literature, but were unable to find a reference, so we have included its short proof. 

\begin{lemma}\label{lem:Gnp connectivity}
Let $G \sim G(n,p)$. Then 
\[
\mathbb{P}\left( G \text{ is not connected} \right) \leq e^{ne^{-\frac{np}{2}}} - 1
\]
\end{lemma}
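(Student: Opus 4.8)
\textbf{Proof proposal for Lemma~\ref{lem:Gnp connectivity}.}
The plan is to bound $\mathbb{P}(G \text{ is not connected})$ by a union bound over the possible ``smallest components'' of a disconnected graph. If $G\sim G(n,p)$ is disconnected, then there is a vertex set $S$ with $1\le |S|\le n/2$ such that $S$ is a union of connected components of $G$, i.e. no edges of $G$ go between $S$ and $[n]\setminus S$. The probability that a fixed set $S$ of size $k$ has no edges leaving it is $(1-p)^{k(n-k)}$. So I would write
\begin{align*}
\mathbb{P}(G\text{ disconnected}) \le \sum_{k=1}^{\lfloor n/2\rfloor} \binom{n}{k}(1-p)^{k(n-k)}\,.
\end{align*}
For $1\le k\le n/2$ we have $n-k\ge n/2$, so $(1-p)^{k(n-k)}\le (1-p)^{kn/2}\le e^{-pkn/2}$, and $\binom{n}{k}\le n^k$, giving each term a bound of $\big(n e^{-np/2}\big)^k$.

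Summing the geometric-type series, $\sum_{k=1}^{\lfloor n/2\rfloor}\big(ne^{-np/2}\big)^k \le \sum_{k=1}^{\infty} x^k$ with $x = ne^{-np/2}$; if $x<1$ this equals $x/(1-x)$, which is at most $e^x-1$ is not quite the cleanest route. A cleaner finish: use $\sum_{k\ge 1} x^k/k! = e^x-1$ is even weaker. Actually the tidiest is to keep the factorial from $\binom{n}{k}\le n^k/k!$ and bound $(1-p)^{k(n-k)}$ more carefully. Since $n-k \ge n/2$ for $k\le n/2$, we get $\binom{n}{k}(1-p)^{k(n-k)} \le \frac{n^k}{k!}e^{-npk/2} = \frac{(ne^{-np/2})^k}{k!}$, so
\begin{align*}
\mathbb{P}(G\text{ disconnected}) \le \sum_{k=1}^{\lfloor n/2\rfloor}\frac{\big(ne^{-np/2}\big)^k}{k!} \le \sum_{k=1}^{\infty}\frac{\big(ne^{-np/2}\big)^k}{k!} = e^{ne^{-np/2}}-1\,,
\end{align*}
which is exactly the claimed bound.

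I expect no real obstacle here; the only point requiring a little care is justifying the very first inequality (that a disconnected graph has a component-union $S$ with $|S|\le n/2$), which is immediate since the smallest component has at most $n/2$ vertices, and it is a union of components so has no edges to its complement. The rest is the two routine estimates $n-k\ge n/2$ and $\binom{n}{k}\le n^k/k!$, plus recognizing the Taylor series of $e^x-1$.
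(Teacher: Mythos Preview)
Your proposal is correct and follows essentially the same approach as the paper: the same union bound over vertex sets $S$ with $1\le |S|\le n/2$ having no edges to $S^c$, and the same reduction via $n-k\ge n/2$. The only cosmetic difference is in summing the resulting series: the paper keeps $\binom{n}{k}$ intact, extends the sum to $k\le n$, applies the binomial theorem to get $(1+(1-p)^{n/2})^n-1$, and then uses $1+x\le e^x$ twice; you instead use $\binom{n}{k}\le n^k/k!$ and recognise the Taylor series of $e^x-1$ directly.
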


\begin{proof} 
    Let $S$ be a subset of $[n]$ such that $S\neq\emptyset$ and $S\neq[n]$. Then
    \begin{align*}
        \mathbb{P}\Big(\textrm{$S$ is not connected to $S^c$ in $G$}\Big)=(1-p)^{|S|(n-|S|)} \,.
    \end{align*}
    This implies that
    \begin{align*}
        \mathbb{P}\big(\textrm{$G$ is not connected}\big)&=\mathbb{P}\big(\exists S\subseteq[n]:1\leq|S|\leq n/2\textrm{ and $S$ is not connected to $S^c$ in $G$}\big)\\
        &\leq\sum_{S\subseteq[n]:1\leq|S|\leq n/2}\mathbb{P}\Big(\textrm{$S$ is not connected to $S^c$ in $G$}\Big)\,.
    \end{align*}
    Combined with the previous result, this leads to
    \begin{align*}
        \mathbb{P}\big(\textrm{$G$ is not connected}\big)&=\sum_{S\subseteq[n]:1\leq|S|\leq n/2}(1-p)^{|S|(n-|S|)}\leq\sum_{1\leq k\leq n/2}\binom{n}{k}(1-p)^{k(n-k)}\,.
    \end{align*}
    Use now that $(n-k)\geq n/2$ along with the fact that $1-p\geq0$ to obtain that
    \begin{align*}
        \mathbb{P}\big(\textrm{$G$ is not connected}\big)&\leq\sum_{1\leq k\leq n}\binom{n}{k}(1-p)^{kn/2}=\big(1+(1-p)^{n/2}\big)^n-1\,.
    \end{align*}
    Finally, by using twice the convexity of exponential, we have
    \begin{align*}
        \mathbb{P}\big(\textrm{$G$ is not connected}\big)&\leq \left(1+e^{-\frac{pn}{2}}\right)^n-1\leq e^{ne^{-\frac{pn}{2}}} - 1 \,,
    \end{align*}
    which is the desired result.
\end{proof}

\begin{fact}\label{fact:wn_upper}
For all $n$ sufficiently large, it holds that $\mathbb{P}(W_n > 3\log^2 n/n) \le 1/n^{\log n}$.
\end{fact}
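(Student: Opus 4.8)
The plan is to reduce the tail bound on $W_n$ to a disconnection probability for $G(n,p)$ and then invoke Lemma~\ref{lem:Gnp connectivity} with a well-chosen $p$.

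First I would use the Kruskal/Erd\H os--R\'enyi coupling described above to identify the relevant event. The edges of $\mstr(\rK_n)$ of weight at most $p$ are precisely the edges of $F(n,p)$, and $F(n,p)$ has the same connected components as $G(n,p)$; hence $W_n \le p$ if and only if $F(n,p)$ spans $[n]$, which happens if and only if $G(n,p)$ is connected. Since the edge weights are continuous, $\mathbb{P}(W_n = p)=0$, and therefore $\mathbb{P}(W_n > p) = \mathbb{P}(G(n,p)\text{ is disconnected})$ for every $p \in (0,1)$.

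Next I would take $p = 3\log^2 n/n$ and apply Lemma~\ref{lem:Gnp connectivity}. With this choice $np/2 = \tfrac32\log^2 n$, so $e^{-np/2} = n^{-3(\log n)/2}$ and $n e^{-np/2} = n^{1-3(\log n)/2}$; the lemma then yields
\[
\mathbb{P}\big(W_n > 3\log^2 n/n\big) \;\le\; e^{\,n^{1-3(\log n)/2}} - 1 .
\]
Finally, since $n^{1-3(\log n)/2} \to 0$, for $n$ large the elementary inequality $e^x - 1 \le 2x$ (valid for $x\in[0,1]$) gives
\[
e^{\,n^{1-3(\log n)/2}} - 1 \;\le\; 2\,n^{1-3(\log n)/2} \;=\; 2\,n^{1-(\log n)/2}\cdot n^{-\log n} \;\le\; n^{-\log n}
\]
once $n$ is large enough that $2\,n^{1-(\log n)/2} \le 1$, which is the claimed bound.

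There is no real obstacle here: the only points requiring a little care are the coupling identification of $\{W_n \le p\}$ with $\{G(n,p)\text{ connected}\}$, and checking that $p=3\log^2 n/n$ is simultaneously small enough to be of the stated order and large enough that the doubly-exponential bound of Lemma~\ref{lem:Gnp connectivity} collapses below $n^{-\log n}$.
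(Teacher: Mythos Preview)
Your argument is correct and follows essentially the same route as the paper: reduce $\{W_n>p\}$ to disconnection of $G(n,p)$ via the Kruskal coupling, then apply Lemma~\ref{lem:Gnp connectivity} with $p=3\log^2 n/n$. The only cosmetic difference is that you make the final numerical estimate explicit via $e^x-1\le 2x$, whereas the paper simply asserts $e^{n^{1-(3/2)\log n}}-1 \le n^{-\log n}$ for $n$ large.
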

\begin{proof}
Under the above coupling, $F(n,p)$ and $G(n,p)$ have the same connected components, so 
\[
\mathbb{P}\big(W_n > 3\log^2 n/n\big)
=\mathbb{P}\Big(F(n,3\log^2 n/n)\mbox{ is not connected}\Big)
= \mathbb{P}\Big(G(n,3\log^2 n/n)\mbox{ is not connected}\Big). 
\]
Use now the bound from Lemma~\ref{lem:Gnp connectivity} to obtain that
\[
\mathbb{P}\Big(G(n,3\log^2 n/n)\mbox{ is not connected}\Big) 
\le \exp(ne^{-(3/2)\log^2 n})-1 = e^{n/(n^{\log n})^{3/2}}-1 \leq 1/n^{\log n}\, 
\]
the final bound holding for all $n$ sufficiently large.
\end{proof}

\begin{proof}[Proof of Theorem~\ref{thm:bound on diam(mst)}]
We prove the theorem by bounding $|T^{\max}_{n,p}|$ and $L_{n,p}$, for a carefully chosen value of $p$ (spoiler: we will take $p=1/n + 1/n^{11/10}$), then applying Proposition~\ref{prop:mst_upper}. Our arguments lean heavily on results from \cite{addario2006diameter}, and we next introduce those results (and the terminology necessary to do so). 

For $c > 0$, let $\mathop{\alpha}(c)$ be the largest real solution of $e^{-cx}=1-x$ (the quantity $\mathop{\alpha}(c)$ is the survival probability of a Poisson$(c)$ branching process). The key to the proof is the fact that the size of the largest component of $G(n,p)$ is with high probability close to $n\mathop{\alpha}(np)$ when $p=(1+o(1))/n$. We now provide a precise and quantitative version of this statement, with error bounds. 

By~\cite[Exercise~21~(d)]{addario2015partition}, for $\eps \ge 0$ we have 
\[
2\eps(1-o(1)) \le \mathop{\alpha}(1+\eps) \le 2\eps\, ,
\]
the first inequality holding as $\eps \to 0$. In particular, 
\begin{equation}
(3/2)\eps \le \mathop{\alpha}(1+\eps) \le 2\eps      
\end{equation}
for all $\eps \ge 0$ sufficiently small.

For the remainder of the proof, fix $p=1/n+1/n^{11/10}$ and write $s^+= n\mathop{\alpha}(n\log(1/(1-p))+ n^{3/4}$ and $s^-= n\mathop{\alpha}(n\log(1/(1-p))- 2n^{3/4}$. (Aside: for the careful reader who is verifying the connections to the results from \cite{addario2015partition}, note that $s^+=t^+$ but $s^-\neq t^-$, where $t^+,t^-$ are defined in~\cite[Proof of Theorem~4.4, Case~2]{addario2015partition}). By~\cite[Exercise~23~(a)]{addario2015partition}, for all $n$ sufficiently large we have 
\[
n \mathop{\alpha}(np) \le n\mathop{\alpha}(n\log(1/(1-p)) \le n \mathop{\alpha}(np)+\frac{2n^{1/2}}{1-p}, 
\]
and using the above bounds on $\mathop{\alpha}$, this yields
\[
n^{9/10} \le s^- \leq s^+ \le 3 n^{9/10}\,,
\]
for $n$ sufficiently large.

Let $\mathcal{C}^{\max}$ be the largest connected component of $G(n,p)$, and let $\mathcal{C}^{\mathrm{runnerup}}$ be its second largest component. Using the previous inequality on $s^-$ and $s^+$, by~\cite[(4.7)]{addario2015partition} we have 
\begin{equation}\label{eq:cmax_upper}
\mathbb{P}\big(|\mathcal{C}^{\max}|\ge 3n^{9/10}\big) 
\le 
\mathbb{P}\big(|\mathcal{C}^{\max}| \ge s^+\big) \le ne^{-(25/2)n^{1/10}} \, ;
\end{equation}
moreover, by~\cite[(4.10)]{addario2015partition}, we have 
\begin{equation}\label{eq:cmax_lower}
\mathbb{P}\big(|\mathcal{C}^{\max}| \le n^{9/10}\big)
\le \mathbb{P}\big(|\mathcal{C}^{\max}| \le s^-\big) \le 2ne^{-(25/2)n^{1/10}}\, ;
\end{equation}
finally, by~\cite[(4.10) and(4.11)]{addario2015partition}, we have 
\begin{equation}\label{eq:c2_upper}
    \mathbb{P}\big(|\mathcal{C}^{\mathrm{runnerup}}| \ge n^{4/5}\big) \le 5ne^{-(25/2)n^{1/10}}\, .
\end{equation}
Furthermore, under the coupling between $G(n,p)$ and $F(n,p)$, we have $|\mathcal{C}^{\max}| = |T^{\max}_{n,p}|$, so \eqref{eq:cmax_upper} immediately gives us that for all $n$ sufficiently large, 
\begin{equation}\label{eq:tnpmax_upper}
    \mathbb{P}\big(|T^{\max}_{n,p}| \ge 3n^{9/10}\big) \le ne^{-(25/2)n^{1/10}}\,. 
\end{equation}

It remains to bound $L_{n,p}$. For this, we use \eqref{eq:cmax_lower} and a Prim's-algorithm-type construction to control the greatest number of connected components of $G(n,p)$ that any path of $\mstr(\rK_n)$ lying outside $T^{\max}_{n,p}$ passes through, and use 
\eqref{eq:c2_upper} to bound the size of those components.

Condition on the graph $G(n,p)$, and fix a connected component $\mathcal{C}_1$ of $G(n,p)$ different from $\mathcal{C}^{\max}$. Let $f_1=u_1v_1$ be the smallest-weight edge with exactly one endpoint in $\mathcal{C}_1$, and let $p_1$ be its weight. Then $p_1 > p$, and $f_1$ is a cut-edge of $G(n,p_1)$. It follows that $f_1$ is an edge of $\mstr(\rK_n)$. Moreover, by the exchangeability of the edge weights, the endpoint $v_1$ of $f_1$ not lying in $\mathcal{C}_1$ is uniformly distributed over the remainder of the vertices, so 
\[
\mathbb{P}\Big(v_1 \not\in \mathcal{C}^{\max}~\Big|~G(n,p)\Big) \le 1-\frac{|\mathcal{C}^{\max}|}{n-|\mathcal{C}_1|} < \, 
1-\frac{|\mathcal{C}^{\max}|}{n}.
\]

If $v_1$ is not in $\mathcal{C}^{\max}$, then it lies in another connected component $\mathcal{C}_2$. Let $f_2=u_2v_2$ be the smallest-weight edge leaving $\mathcal{C}_1\cup \mathcal{C}_2$, and let $p_2$ be its weight. Then $f_2$ is an edge of $\mstr(\rK_n)$; to see this, note that any path $\gamma$ connecting $u_2$ and $v_2$ which is not just the edge $f_2$ contains some edge $e$ of weight strictly greater than $p_2$, meaning that $f_2$ is never the heaviest edge of any cycle. Moreover, the endpoint $v_2$ of $f_2$ not lying in $\mathcal{C}_1\cup\mathcal{C}_2$ is uniformly distributed over the remainder of the graph, so once again
\[
\mathbb{P}\Big(v_2 \not\in \mathcal{C}^{\max}~\Big|~G(n,p),v_1\not\in\mathcal{C}^{\max}\Big)
< 
1-\frac{|\mathcal{C}^{\max}|}{n}. 
\]
Continuing this process, we construct a sequence $\mathcal{C}_1,\ldots,\mathcal{C}_K$ of distinct connected components of $G(n,p)$ and a sequence $f_1,\ldots,f_K$ of edges of $\mstr(\rK_n)$, where where $f_i = u_iv_i$ is the smallest-weight edge from $\mathcal{C}_1\cup \ldots\cup \mathcal{C}_i$ to the remainder of the graph, $\mathcal{C}_1,\ldots,\mathcal{C}_K$ are all connected components of $G(n,p)$ different from $\mathcal{C}^{\max}$, and $v_K \in \mathcal{C}^{\max}$. To bound the length $K$ of the sequences, we use that at each step of the construction, the conditional probability that $f_j=u_iv_j$ has an endpoint in $\mathcal{C}^{\max}$ given $G(n,p)$ and given that $e_1,\ldots,e_{i-1}$ do not have an endpoint in $\mathcal{C}^{\max}$, is greater than $|\mathcal{C}^{\max}|/n$, and 
so 
\begin{align*}
\mathbb{P}\Big(K > k~\Big|~G(n,p)\Big) &= 
\mathbb{P}\Big(v_k \not\in\mathcal{C}^{\max}~\Big|~G(n,p)\Big) \\
& = 
\prod_{i=1}^k
\mathbb{P}\Big(v_i \not\in\mathcal{C}^{\max}~\Big|~G(n,p), v_1,\ldots,v_{i-1} \not \in \mathcal{C}^{\max}\Big)\\
& \leq \left(1-\frac{|\mathcal{C}^{\max}|}{n}\right)^k
\end{align*}

Note now that any path in $\mstr(\rK_n)$ with one endpoint in $\mathcal{C}_1$ and the other endpoint in $\mathcal{C}^{\max}$ passes through $\mathcal{C}_1,\ldots,\mathcal{C}_K$ and edges $f_1,\ldots,f_K$. Since each of the components $\mathcal{C}_1,\ldots,\mathcal{C}_K$ has size at most that of $\mathcal{C}^{\mathrm{runnerup}}$, it follows that the greatest number of edges in any path with one endpoint in $\mathcal{C}_1$ which only intersects $\mathcal{C}^{\max}$ in one vertex is at most $K |\mathcal{C}^{\mathrm{runnerup}}|$. Taking a union bound over the possible choices for $\mathcal{C}_1$ among all components of $G(n,p)$ different from $\mathcal{C}^{\max}$ (there are less than $n$ of them), it follows that 
\begin{align*}
    \mathbb{P}\Big(L_{n,p} > k\big|\mathcal{C}^{\mathrm{runnerup}}\big|~\Big|~ G(n,p)\Big)
    & \le n \mathbb{P}\Big(K > k~\Big|~ G(n,p)\Big) \\
    & \leq n\left(1-\frac{|\mathcal{C}^{\max}|}{n}\right)^k. 
\end{align*}
Recall now the tail bounds for $|\mathcal{C}^{\max}|$ and $|\mathcal{C}^{\mathrm{runnerup}}|$ from \eqref{eq:cmax_lower} and \eqref{eq:c2_upper} and use that
\begin{align*}
    &\mathbb{P}\Big(L_{n,p} > n^{9/10}\log^2 n~\Big|~ G(n,p),|\mathcal{C}^{\max}|>n^{9/10},|\mathcal{C}^{\mathrm{runnerup}}|<n^{4/5}\Big)\\
    &\hspace{0.5cm}=\mathbb{P}\Big(L_{n,p} > (n^{1/10}\log^2 n)\cdot n^{4/5}~\Big|~G(n,p),|\mathcal{C}^{\max}|>n^{9/10},|\mathcal{C}^{\mathrm{runnerup}}|<n^{4/5}\Big)\\
    &\hspace{0.5cm}\leq n\left(1-\frac{n^{9/10}}{n}\right)^{n^{1/10}\log^2n}\leq ne^{-\log^2n}
\end{align*}
to obtain
\begin{align}\label{eq:lnp_bound}
    \mathbb{P}\Big(L_{n,p} > n^{9/10}\log^2 n\Big)
    & \le 7ne^{-(25/2)n^{1/10}} + ne^{-\log^2n} \leq \frac{2}{n^{\log n}}\, ,
\end{align}
the last bound holding for $n$ large enough. 

We can now conclude the proof of Theorem~\ref{thm:bound on diam(mst)}.
By Fact~\ref{fact:wn_upper}, for $n$ sufficiently large, $\mathbb{P}(W_n > 3\log^2 n/n) \le 1/n^{\log n}$. Combined with \eqref{eq:lnp_bound}, this implies that 
\[
\mathbb{P}\left(2W_nL_{n,p} > \frac{6\log^4 n}{n^{1/10}}\right) \leq \frac{3}{n^{\log n}}\, .
\]
Using the bound of Proposition~\ref{prop:mst_upper} and combining it with the previous inequality and \eqref{eq:tnpmax_upper}, we obtain that
\begin{align*}
    \mathbb{P}\left(\wdiam\big(\mstr(\rK_n)\big) > 3pn^{9/10}+\frac{6\log^4 n}{n^{1/10}}\right) &\leq\mathbb{P}\left(\big|T^{\max}_{n,p}\big|\geq 3n^{9/10}\right) + \mathbb{P}\left(2W_nL_{n,p} > \frac{6\log^4 n}{n^{1/10}}\right) \\
    &\le ne^{-(25/2)n^{1/10}} + \frac{3}{n^{\log n}} \leq \frac{4}{n^{\log n}}\,,
\end{align*}
the last inequality holding when $n$ is large. 
Finally, since $p=1/n+1/n^{11/10}$, for $n$ large we have $3pn^{9/10}+\tfrac{6\log^4 n}{n^{1/10}}< \tfrac{7\log^4 n}{n^{1/10}}$, so the bound of Theorem~\ref{thm:bound on diam(mst)} follows.

\end{proof}

\section*{Acknowledgements}
This work was initiated at the Bellairs Research Institute of McGill University. The authors thank G\'abor Lugosi, who initially proposed a version of the problem studied in this work, at a Bellairs workshop on probability and combinatorics.

\bibliographystyle{siam}
\bibliography{articles}

\end{document}